\documentclass{article}
\usepackage{amsmath,amssymb,amsthm,mathtools}
\usepackage{hyperref}

\author{Henrik Schneider\\[0.4em]
\small University of Duisburg-Essen, Germany\\
\small\href{mailto:henrik.schneider@uni-due.de}{\texttt{henrik.schneider@uni-due.de}}}
\title{Weak Solutions for the Unregularised Hibler Momentum Equation with Degenerate Coefficients}

\newtheorem{theorem}{Theorem}[section]
\newtheorem{lemma}[theorem]{Lemma}
\theoremstyle{definition}
\newtheorem{definition}{Definition}[section]
\newtheorem{assumption}[definition]{Assumption}

\newtheorem{remark}[definition]{Remark}
\newcommand{\bfu}{\boldsymbol{u}}
\newcommand{\bfv}{\boldsymbol{v}}
\newcommand{\bfw}{\boldsymbol{w}}
\newcommand{\bfsigma}{\boldsymbol{\sigma}}
\newcommand{\bftau}{\boldsymbol{\tau}}
\newcommand{\bfxi}{\boldsymbol{\xi}}
\newcommand{\bfe}{\boldsymbol{e}}
\newcommand{\bfI}{\boldsymbol{\mathrm{I}}}
\newcommand{\bff}{\boldsymbol{f}}
\newcommand{\K}{\mathcal{K}}
\newcommand{\D}{\mathcal{D}}
\newcommand{\dd}{\:\mathrm{d}}
\newcommand{\B}{\mathcal{B}}
\newcommand{\wstar}{\overset{\ast}{\rightharpoonup}}
\newcommand{\bfeta}{\boldsymbol{\eta}}
\newcommand{\bfz}{\boldsymbol{z}}
\newcommand{\Hh}{\mathcal{H}}

\renewcommand\div{\operatorname{\mathrm{div}}}
\DeclareMathOperator{\tr}{tr}
\DeclareMathOperator{\dev}{dev}
\DeclareMathOperator{\dom}{dom}
\DeclareMathOperator{\dist}{dist}
\DeclareMathOperator{\supp}{supp}
\DeclareMathOperator{\esssup}{ess sup}

\begin{document}
\maketitle
\begin{abstract}
We study the momentum equation of the decoupled unregularised Hibler sea-ice
model with non-negative ice mass and ice strength, both of which may vanish.
The ocean drag provides coercivity, while the stress law and mixed boundary
conditions are encoded by a convex dissipation functional. We prove existence,
uniqueness and stability for the time-discrete problem and reconstruct an
admissible stress. A Rothe approximation yields global weak variational
solutions for time-dependent coefficients under a one-sided growth condition
on the mass. Weak solutions are unique when the ice strength is
independent of time.
For spatially Lipschitz ice strength, finite dissipation also yields a local
measure structure of the weighted deformation and a global bound on the
negative part of the weighted divergence.
\end{abstract}
\section{Introduction}
Hibler's viscous-plastic model is a standard description of large-scale sea-ice
dynamics \cite{H79}. The original formulation combines plastic behaviour at
larger deformation rates with viscous creep at small rates by imposing an
upper bound on the viscosities. This regularisation avoids the singularity
of the plastic constitutive law at zero deformation rate.

Hunke and Dukowicz \cite{HD97} introduced the elastic-viscous-plastic (EVP)
model to facilitate explicit time stepping and parallel computation. An
artificial elastic component replaces the instantaneous stress relation by
an evolution equation for the stress. Subsequent developments include
Hunke's revised treatment of the linearisation \cite{H01} and the modified
EVP iteration of Bouillon et al.\ \cite{BFLM13}, designed to improve
convergence towards the viscous-plastic solution. Recent mathematical work
also treats additional Voigt regularisations in the stress equation
\cite{BLTT26a} and in the momentum balance \cite{BLTT26b}.

The unregularised stress law considered here is singular at zero deformation
rate, and the possible disappearance of ice introduces further degeneracy
through the mass and ice strength. These features motivate a variational
formulation that does not require a positive lower bound for either
coefficient.

Strong well-posedness for regularised or modified coupled Hibler models was
established in \cite{BDHH22,LTT22}. The parabolic-hyperbolic setting is treated
in \cite{B25}. More recently, Dingel and Disser \cite{DD25} proved global weak
existence and uniqueness for the momentum equation with cut-offs at small
and large strain rates. Denk, Gmeineder and Hieber \cite{DGH25} treated the
genuinely unregularised stress through energy-driven solutions and relaxed
Hibler energies, including boundary approximation.

In this work the mass and ice strength are prescribed independently of the
velocity. Compared with the setting in \cite{DGH25}, our contribution is an
existence theory for possibly vanishing, time-dependent coefficients with the
full quadratic ocean drag. Its coercivity controls the velocity even where
the mass or ice strength vanishes. We impose mixed Dirichlet and Neumann
conditions, allowing an empty Dirichlet portion.

We first formulate the time-discrete problem as a variational inequality in
$L^3$. A dual support-function representation yields an admissible saturated
stress and encodes the Dirichlet condition through a relaxed boundary term.
For spatially Lipschitz ice strength, we also identify the local measure
structure of the weighted deformation. For the evolution problem, a one-sided
growth condition on the mass permits weighted energy estimates and the
construction of weak variational solutions by time discretisation.

When the ice strength is independent of time, we prove uniqueness in this
weak solution class without requiring a time derivative of the velocity.
The proof combines approximation with convergence of the dissipation and
one-sided time mollification, techniques related to the scalar linear-growth
theory in \cite{E25}. The weak evolution is formulated in terms of the
velocity and the dissipation. A full stress-and-flow-rule formulation for
these solutions, uniqueness for general time-dependent ice strength and the
coupling to the transport equations remain open within this framework.

\section{Model}
The motion of sea ice is modelled by
\begin{align*}
    m \partial_t \bfu  + m f_c \bfe_3 \times \bfu - \bftau_o(\bfu) = \div \bfsigma + \bftau_a - mg\nabla H
\end{align*}
on $\Omega \subset \mathbb R^{2}$, where $m=\rho_{i}h \geq 0$ is the ice mass per unit area, $\bfu$
the ice velocity, $\bfsigma \in \mathbb S$ (symmetric $2\times 2$ matrices) the vertically integrated stress tensor,
$\bftau_a$ the atmospheric drag,
\begin{align*}
    \bftau_o (\bfu) = c_o |\bfu_o-\bfu|(\bfu_o-\bfu), \quad c_o := \rho_o C_o >0
\end{align*}
the ocean drag ($\bfu_o$ the ocean velocity), $mf_c \bfe_3 \times \bfu$ the Coriolis term and $mg\nabla H$ the tilt of the ocean surface.
We neglect the convective term $(\bfu \cdot \nabla)\bfu$ due to scaling properties.
The rheology is viscous-plastic with an elliptic yield curve \cite{H79}.
Let $\varepsilon(\bfu) := \frac{1}{2} (\nabla\bfu+\nabla\bfu^T)$ be the strain rate tensor, $\zeta = P/(2\Delta(\varepsilon(\bfu)))$, $\eta=\zeta/e^2$ ($e=2$) the nonlinear viscosities and
\begin{align*}
    \Delta (\bfxi) = \sqrt{\xi^2_I+ e^{-2}\xi_{II}^2}, \quad
    \xi_I := \tr \bfxi, \quad \xi_{II} := \sqrt{(\xi_{11}-\xi_{22})^2 +4 \xi_{12}^2} \ ,
\end{align*}
then in the plastic regime
\begin{align}
    \bfsigma = 2\eta \varepsilon(\bfu) + (\zeta-\eta) \tr \varepsilon(\bfu) \bfI - \frac{P}{2}\bfI, \quad
    P= P^*h \exp(-C(1-A)) \label{eq:plastic_stress}
\end{align}
where $P\geq 0$ is the compressive sea ice strength ($h$ sea ice thickness, $A$ sea ice concentration). The balance laws for $A$ and $h$
are assumed to be decoupled, i.e.\ $m$ and $P$ are given data.
The stress invariants are
\begin{align*}
    \sigma_I := \frac{1}{2} \tr \bfsigma , \qquad \sigma_{II} := \sqrt{\frac{1}{4}(\sigma_{11}-\sigma_{22})^2 +\sigma_{12}^2}
\end{align*}
For $P>0$ and nonzero deformation rate, \eqref{eq:plastic_stress}
places $\bfsigma$ on the boundary of the yield set
\begin{align}
    F(\bfsigma, P) := \frac{(\sigma_I + P/2)^2}{(P/2)^2} + \frac{\sigma_{II}^2}{(P/(2e))^2} -1 \leq 0 \label{eq:def_F}
\end{align}
with the deformation rate following the normal flow rule. For $P>0$, the
regularisation $\Delta\mapsto\max(\Delta,\Delta_{\mathrm{min}})$ places
$\bfsigma$ strictly inside the ellipse when $\Delta<\Delta_{\mathrm{min}}$
and on its boundary when $\Delta\geq\Delta_{\mathrm{min}}$. The unregularised
model considered here is described by the corresponding complementarity formulation. With
the admissible convex set
\begin{align*}
    &\K(P) := \{\bftau \in \mathbb S \ : \ F(\bftau, P) \leq 0\} \quad \text{if} \ P>0, \\
    &\K(0) := \{0\}
\end{align*}
the flow rule reads
\begin{align*}
    \varepsilon(\bfu)(x) \in \mathcal{N}_{\K(P(x))}(\bfsigma(x))
\end{align*}
where $\mathcal{N}_{\K}$ denotes the normal cone to $\K$.
\subsection{Time discretisation}
We apply an implicit Euler discretisation with time step $k>0$.
Given $\bfu^{n-1}$, $m$, $P$, $\bfu_o$ and $\bff := \bftau_a -mg\nabla H$, find
$(\bfsigma, \bfu)$ such that
\begin{subequations}
\label{eq:time_discrete_model}
\begin{align}
    \frac{m}{k} (\bfu-\bfu^{n-1}) + m f_c \bfe_3 \times \bfu - \div \bfsigma - \bftau_o(\bfu) = \bff \quad \text{in} \ \Omega, \\
    \varepsilon(\bfu) \in \mathcal{N}_{\K(P)}(\bfsigma)\\
    \bfu = 0 \quad \text{on} \ \Gamma_D \\
    \bfsigma \nu = 0 \quad \text{on} \ \Gamma_N
\end{align}
\end{subequations}

where $\partial\Omega = \bar\Gamma_D \cup \bar\Gamma_N$.
The Dirichlet boundary condition is imposed in the relaxed form given in \eqref{eq:int_darstellung}.
If the Coriolis term is treated explicitly ($\bfu^{n-1}$ instead of $\bfu$),
then the corresponding relaxed variational problem is a convex minimisation
problem. The fully implicit case is a
non-variational but monotone problem. Its well-posedness is covered in Section \ref{sec:td}.
\section{Assumptions}
For $\bfxi \in \mathbb S$ we have the decomposition $\bfxi = \frac{\xi_I}{2}\bfI + \dev \bfxi$, where $\xi_I = \tr \bfxi$ is the trace and $\dev\bfxi = \bfxi- \frac{1}{2}\tr \bfxi \bfI$. We have for the invariant $\xi_{II}$ that $|\dev \bfxi |_F^2 = \frac{1}{2}\xi_{II}^2$
with $|\cdot|_F$ the Frobenius norm. For the stress invariants we have
\begin{align*}
    |\dev\bftau|_F^2 = 2 \tau_{II}^2  \\
    \bftau : \bfxi = \tau_I \xi_I + \dev\bftau : \dev\bfxi, \quad
    \dev\bftau : \dev \bfxi \leq |\dev \bftau|_F |\dev \bfxi|_F = \tau_{II} \xi_{II}
\end{align*}
with $\bftau = \tau_I \bfI + \dev\bftau$. The function $\bfxi \mapsto\Delta(\bfxi)$ defines a norm $|\cdot|_\Delta$ on $\mathbb S$, the corresponding
dual norm is
\begin{align*}
    |\bftau|_{\Delta^*} = \sqrt{\tau_I^2 + e^2\tau_{II}^2} \ .
\end{align*}
For all $\bfxi\in\mathbb S$ we also have
\begin{align}
    |\xi_I| \leq \Delta(\bfxi) \label{eq:xi_estimate}
\end{align}
with equality exactly when $\dev \bfxi = 0$.

\begin{lemma} \label{lema:prop_K}
    For $P\geq 0$ the following statements hold:
    \begin{enumerate}
        \item $\K(P) = P \K(1)$, in particular $P\mapsto \K(P)$ is monotone, i.e.\ from $0\leq P' \leq P$ follows $\K(P') \subseteq \K(P)$, and $0\in \K(P)$ for all $P\geq 0$.
        \item Boundedness: $|\bftau |_F \leq c_\K P$ for all $\bftau \in \K(P)$ with $c_\K = \sqrt{2+1/(2e^2)}$.
        \item For $P>0$, $\K(P)$, understood in the three-dimensional space $(\tau_I, \dev\bftau) \in \mathbb R \times \mathbb R^2$, is an ellipsoid with centre $-\frac{P}{2} \bfI$, hence strictly convex with $C^\infty$ boundary, and the outer unit normal $\partial \K(P) \rightarrow S^2$ is bijective.
    \end{enumerate}
\end{lemma}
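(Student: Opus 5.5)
The plan is to reduce everything to the normalised set $\K(1)$ via homogeneity. Then we read off explicit bounds on the invariants and identify $\K(P)$ with an ellipsoid in suitable linear coordinates on $\mathbb S$.

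\emph{Item 1.} The invariants $\sigma_I$ and $\sigma_{II}$ are positively $1$-homogeneous in $\bfsigma$. For $P>0$ we rewrite
\begin{align*}
F(\bftau,P) = \Bigl(\frac{2\tau_I}{P}+1\Bigr)^2 + \Bigl(\frac{2e\,\tau_{II}}{P}\Bigr)^2 - 1 = F(\bftau/P,1),
\end{align*}
hence $\bftau\in\K(P)$ iff $\bftau/P\in\K(1)$, i.e.\ $\K(P)=P\K(1)$. For $P=0$ this holds by the definition $\K(0)=\{0\}$. Next, $F(0,1)=1+0-1=0$, so $0\in\K(1)$, and therefore $0\in\K(P)$ for every $P\ge 0$. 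The set $\K(1)$ is convex: this follows either from item 3 or directly, since $F(\cdot,1)$ is a sum of squares of a nonnegative seminorm-type expression and an affine function. Monotonicity then follows. For $0<P'\le P$ and $\bftau\in\K(1)$ we have $P'\bftau=\tfrac{P'}{P}(P\bftau)+(1-\tfrac{P'}{P})\,0\in P\K(1)$. The case $P'=0$ is covered by $0\in\K(P)$.

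\emph{Item 2.} By item 1 it suffices to take $P=1$. From $F(\bftau,1)\le0$ each summand is at most $1$. Thus $|\tau_I+\tfrac12|\le\tfrac12$, so $\tau_I\in[-1,0]$ and $|\tau_I|\le 1$, and $\tau_{II}\le 1/(2e)$. Using $\bftau=\tau_I\bfI+\dev\bftau$ (with $\tau_I=\frac12\tr\bftau$), orthogonality of $\bfI$ and deviators, and $|\dev\bftau|_F^2=2\tau_{II}^2$, we get
\begin{align*}
|\bftau|_F^2 = 2\tau_I^2+2\tau_{II}^2\le 2+\tfrac{1}{2e^2}.
\end{align*}
Scaling by $P$ gives $c_\K$.

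\emph{Item 3.} Introduce the linear isomorphism $\mathbb S\to\mathbb R^3$, $\bftau\mapsto(\tau_I,d_1,d_2)$ with $d_1=\tfrac12(\tau_{11}-\tau_{22})$ and $d_2=\tau_{12}$. Then $\dev\bftau$ has entries $\pm d_1$ and $d_2$, and $\tau_{II}^2=d_1^2+d_2^2$. In these coordinates $F\le0$ reads
\begin{align*}
\frac{(\tau_I+P/2)^2}{(P/2)^2}+\frac{d_1^2+d_2^2}{(P/(2e))^2}\le 1.
\end{align*}
This is a solid ellipsoid with centre $\tau_I=-P/2$, $d=0$, i.e.\ $-\tfrac P2\bfI$, and semi-axes $P/2$, $P/(2e)$, $P/(2e)$. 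Equivalently, $\K(P)=c+A\overline{B_1}$ with $A$ diagonal positive definite. Being the affine image of the closed unit ball, it is strictly convex with $C^\infty$ boundary; the boundary is the regular level set of the quadratic $G(x)=|A^{-1}(x-c)|^2$. The Frobenius inner product corresponds to $2(\tau_I\tau_I'+d_1d_1'+d_2d_2')$, a multiple of the Euclidean one, so normals are unaffected. The outer unit normal at $x$ is $n(x)=A^{-2}(x-c)/|A^{-2}(x-c)|$. Its inverse is explicit: $x(n)=c+A^2n/|An|$, which is the unique maximiser of $y\mapsto n\cdot y$ on the strictly convex compact set. Hence the Gauss map is a bijection onto $S^2$.

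The only point needing care is item 3. We must make sure the coordinate identification and the metric used to define the normal are consistent; the scaled-Euclidean observation above settles this. Everything else is direct computation.
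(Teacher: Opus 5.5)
Your proposal is correct and follows essentially the same route as the paper: the scaling identity $F(P\bftau,P)=F(\bftau,1)$ gives item 1, with monotonicity from convexity and $0\in\K(P)$; the bounds $\tau_I\in[-P,0]$ and $\tau_{II}\leq P/(2e)$ together with $|\bftau|_F^2=2\tau_I^2+2\tau_{II}^2$ give item 2; and the positive definite quadratic form in $(\tau_I,\dev\bftau)$ gives item 3. Your version is only more explicit: you check convexity of $\K(1)$ directly, verify that the Frobenius product is a multiple of the Euclidean one in the coordinates $(\tau_I,d_1,d_2)$, and write down the inverse Gauss map $x(n)=c+A^2n/|An|$, where the paper simply appeals to the Gauss map of a non-degenerate ellipsoid.
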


\begin{proof}
    For \textit{1.}, the case $P=0$ follows directly from $\K(0)=\{0\}$.
    For $P>0$ we use the scaling of the invariants,
    \begin{align*}
        (P\bftau)_I = P\tau_I \quad (P\bftau)_{II} = P\tau_{II},
    \end{align*}
    which leads directly to $F(P\bftau, P) = F(\bftau, 1)$ and consequently $P\bftau \in \K(P) \Leftrightarrow \bftau \in \K(1)$. The monotonicity
    follows from $\K(P') = \frac{P'}{P}\K(P) \subseteq \K(P)$, since $\K(P)$ is convex and $0\in\K(P)$. Finally $F(0, P) = 0$ for all $P>0$, so $0$ lies on the boundary of $\K(P)$, and $0\in \K(0)$ by definition.

    The definition of $F$ in \eqref{eq:def_F} shows $\tau_I \in [-P,0]$ and $|\tau_{II}|\leq P/(2e)$ for $\bftau \in \K(P)$. Then the boundedness follows from the estimate
    \begin{align*}
        |\bftau|_F^2 = 2|\tau_I|^2 + 2|\tau_{II}|^2 \leq 2 P^2 + \frac{P^2}{2e^2} \ .
    \end{align*}
    The last claim follows from the fact that $F(\cdot, P)$ is a quadratic polynomial with positive definite principal part in the coordinates $(\tau_I, \dev\bftau)$. Then the set $\{F\leq 0\}$ is a non-degenerate ellipsoid and the Gauss map is a diffeomorphism onto $S^2$.
\end{proof}
We impose the following assumptions for the time-discrete problem.
\begin{assumption}\label{ass}
    \begin{enumerate}
        \item[(A1)] Let $\Omega \subset \mathbb R^2$ be a bounded Lipschitz domain and $\partial \Omega = \bar \Gamma_D \cup \bar \Gamma_N$ with relatively open and disjoint $\Gamma_D, \Gamma_N$ with $\mathcal{H}^1(\partial \Omega \setminus (\Gamma_D \cup \Gamma_N))=0$. The case $\Gamma_D = \emptyset$ is allowed.
        \item[(A2)] Let $m \in L^\infty(\Omega)$, $m\geq 0$, $c_o>0$ constant and $f_c \in L^\infty(\Omega)$.
        \item[(A3)] Let $P \in C(\bar \Omega)$ and $P\geq0$, in particular $P=0$
        on subsets (ice boundary and open water) is allowed.
        \item[(A4)] Let $\sqrt{m} \bfu^{n-1} \in L^2(\Omega)^2$, $\bfu_o \in L^3(\Omega)^2$ and $\bff \in L^{3/2}(\Omega)^2$.
    \end{enumerate}
\end{assumption}
Our solution space for the velocity,
\begin{align*}
    V := L^3(\Omega)^2, \quad V^* = L^{3/2}(\Omega)^2 \ ,
\end{align*}
is reflexive and separable. We use the same norm notation for scalar,
vector-valued and tensor-valued spaces, and omit the domain when it is clear
from the context. The admissible stress set is defined by
\begin{align}
    \begin{split}
    \Sigma := \{ \bftau \in L^\infty(\Omega; \mathbb S) : {}&\bftau(x) \in \K(P(x)) \text{ a.e.},\\
    &\div \bftau \in L^{3/2}(\Omega)^2 \ \text{and}\\
    &\bftau\nu_{|\Gamma_N} = 0 \ \text{weakly} \} \ .
    \end{split}\label{eq:def_Sigma}
\end{align}

The ocean drag $\bftau_o$ alone provides coercivity, so neither Poincar\'e's nor Korn's inequality is required. No positive lower bound for $m$ or $P$ is needed. By the pointwise condition in the definition of $\Sigma$ \eqref{eq:def_Sigma} together with Lemma \ref{lema:prop_K} we have $|\bftau|_F\leq c_\K P$. In particular, $\bftau=0$ a.e.\ on $\{P=0\}$. A traction condition on an
internal interface requires an appropriate normal trace and is not inferred
from this pointwise statement for an arbitrary zero set.

\section{Dissipation function}
The support function of $\K(P)$ can be computed explicitly. For the convex
duality underlying this representation, see \cite[Chapter I]{ET99}.
\begin{lemma}\label{lem:prop_D}
    For $P\geq 0$ and $\bfxi \in \mathbb S$ we have
    \begin{align}
        D(P,\bfxi) := \sup_{\bftau\in\K(P)} \bftau : \bfxi = \frac{P}{2}(\Delta(\bfxi)-\xi_I) \ .
        \label{eq:def_D}
    \end{align}
    In particular $D(P,\cdot)$ is convex, homogeneous of degree 1, non-negative,
    Lipschitz continuous with constant $c_\K P$, and we have
    \begin{align}
        D(P,\bfxi) = 0 \Leftrightarrow \bfxi = \alpha \bfI \ \text{for some}\ \alpha \geq 0 \label{eq:dissipationfree}
    \end{align}
    for all $P>0$. Thus isotropic expansion is dissipation-free. Positive
divergence alone does not imply zero dissipation. This property does not
exclude positive principal stresses elsewhere on the yield ellipse.
Additionally the pointwise estimate
    \begin{align}
        D(P, \bfxi) \geq P(\xi_I)^- \label{eq:negativ_part_D}
    \end{align}
    holds, where $(\cdot)^-$ is the negative part.
\end{lemma}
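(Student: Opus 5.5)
The plan is to compute the support function directly in the invariant coordinates. By Lemma \ref{lema:prop_K} we have $\K(P)=P\K(1)$, so $D(P,\bfxi)=P\,D(1,\bfxi)$ for $P>0$, and for $P=0$ both sides of \eqref{eq:def_D} vanish. It therefore suffices to treat $P=1$. Write $\bftau = \tau_I\bfI+\dev\bftau$ and use $\bftau:\bfxi=\tau_I\xi_I+\dev\bftau:\dev\bfxi$.

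For fixed $\tau_{II}$, the supremum over $\dev\bftau$ of $\dev\bftau:\dev\bfxi$ equals $\tau_{II}\xi_{II}$, by Cauchy--Schwarz and the identities $|\dev\bftau|_F=\sqrt2\tau_{II}$ and $|\dev\bfxi|_F=\xi_{II}/\sqrt2$. Equality is attained by choosing $\dev\bftau$ parallel to $\dev\bfxi$. We then substitute $\tau_I=-\tfrac12+\tfrac12 a$ and $\tau_{II}=\tfrac{1}{2e}b$ with $a^2+b^2\le1$. This gives
\[
\sup_{\bftau\in\K(1)}\bftau:\bfxi=-\tfrac12\xi_I+\tfrac12\sup_{a^2+b^2\le1}\bigl(a\xi_I+b\,e^{-1}\xi_{II}\bigr)=\tfrac12\bigl(\Delta(\bfxi)-\xi_I\bigr),
\]
where the last step is Cauchy--Schwarz in $\mathbb R^2$. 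This step is the only real computation, and the main care needed is getting the invariant normalisations right.

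Most properties then follow at once. Convexity and degree-one homogeneity hold because $D(P,\cdot)$ is a supremum of linear functionals. Non-negativity follows from $0\in\K(P)$, or alternatively from \eqref{eq:xi_estimate}. For the Lipschitz bound, note that $|\bftau:\bfxi-\bftau:\bfxi'|\le|\bftau|_F|\bfxi-\bfxi'|_F\le c_\K P|\bfxi-\bfxi'|_F$ for every $\bftau\in\K(P)$ by Lemma \ref{lema:prop_K}. A supremum of $c_\K P$-Lipschitz functions is again $c_\K P$-Lipschitz.

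For \eqref{eq:dissipationfree} with $P>0$, observe that $D=0$ holds if and only if $\Delta(\bfxi)=\xi_I$. By \eqref{eq:xi_estimate} this forces $\dev\bfxi=0$ and $\xi_I=|\xi_I|\ge0$, so $\bfxi=\alpha\bfI$ with $\alpha=\xi_I/2\ge0$. The converse is immediate.

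For \eqref{eq:negativ_part_D}, use $\Delta(\bfxi)\ge|\xi_I|$ to get $D\ge\frac P2(|\xi_I|-\xi_I)=P(\xi_I)^-$.
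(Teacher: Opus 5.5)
Your proposal is correct and follows essentially the paper's argument: the paper also bounds $\bftau:\bfxi$ by Cauchy--Schwarz in the invariant coordinates $(\tau_I,\dev\bftau)$ against the ellipse condition, and then derives convexity, homogeneity, non-negativity, the Lipschitz constant from $|\bftau|_F\leq c_\K P$, and the last two claims from \eqref{eq:xi_estimate} exactly as you do. The differences are cosmetic: the paper writes down the explicit maximiser $\bftau^*$ instead of invoking the Cauchy--Schwarz equality cases, and strictly your parameter set should be the half-disk $b\geq0$ (since $\tau_{II}\geq0$), which does not change the supremum because $\xi_{II}\geq0$.
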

\begin{proof}
    First we show \eqref{eq:def_D}. For $P=0$ it follows immediately. For $P>0$ we estimate with Cauchy--Schwarz and
    the ellipse condition \eqref{eq:def_F}
    \begin{align*}
        \bftau : \bfxi \leq \frac{P}{2}(\Delta(\bfxi)-\xi_I)
    \end{align*}
    for all $\bftau \in \K(P)$. For $\bfxi\ne0$, the maximum is attained by
    \begin{align*}
        \bftau^* = \frac{P}{2}\left(\frac{\xi_I}{\Delta(\bfxi)}-1\right) \bfI + \frac{P}{e^2\Delta(\bfxi)}\dev\bfxi \in \K(P) \ .
    \end{align*}
    For $\bfxi=0$, every $\bftau\in\K(P)$ is a maximiser. For $P=0$ the
    only admissible stress is zero.
    Convexity, positive homogeneity and non-negativity (by \eqref{eq:xi_estimate}) follow from the properties of the support function of bounded sets with $0 \in \K(P)$. Lemma \ref{lema:prop_K} leads to the Lipschitz constant
    \begin{align*}
        \sup_{\bftau\in\K (P)} |\bftau|_F \leq c_\K P \ .
    \end{align*}
    The characterisation of zero dissipation holds since $D=0$ if and only if $\Delta(\bfxi) = \xi_I$, and from \eqref{eq:xi_estimate} follows $\dev \bfxi = 0$ and $\xi_I \geq 0$.
    The last estimate follows directly from
    \begin{align*}
        \Delta(\bfxi) - \xi_I \geq |\xi_I| - \xi_I = 2 (\xi_I)^- \ .
    \end{align*}
\end{proof}
Since the solutions of the time-discrete problem are a priori only in $L^3$, we
define the dissipation functional dually.
\begin{definition}\label{def:D}
    For $\bfv \in V$ let
    \begin{align}
        \D(\bfv) := \sup_{\bftau\in \Sigma} \left(- \int_\Omega \bfv \cdot \div \bftau \dd x \right) \in [0, \infty] \ .
        \label{eq:def_diss}
    \end{align}
\end{definition}
When the dependence on the strength is needed, we write $\Sigma[P]$ and
$\D[P]$ for the stress set and dissipation functional defined above.
\begin{lemma}\label{lem:hubler}
    For $\mu >0$, $P\geq 0$ and $\bfxi \in \mathbb S$, let $\Delta_\mu := \max (\Delta(\bfxi),\mu)$ and
    \begin{align*}
        \bftau_\mu (P,\bfxi) := \frac{P}{2} \left(\frac{\xi_I}{\Delta_\mu}-1\right) \bfI + \frac{P}{e^2\Delta_\mu}\dev\bfxi \ .
    \end{align*}
    Then the following statements hold:
    \begin{enumerate}
        \item $\bftau_\mu(P,\bfxi) \in \K(P)$ for all $\bfxi$. On $\{ \Delta (\bfxi)\geq \mu\}$,
        $\bftau_\mu$ lies on $\partial\K(P)$ and coincides with the
        maximiser of the support function.
        \item
        \begin{align}
            \bftau_\mu (P, \bfxi) : \bfxi = \frac{P}{2}\Big(\frac{\Delta(\bfxi)^2}{\Delta_\mu} - \xi_I \Big) \geq D(P, \bfxi) - \frac{P\mu}{8}
            \label{eq:hubler}
        \end{align}
        \item The function $\bftau_\mu(P,\cdot)$ is globally Lipschitz continuous on $\mathbb S$ with constant $C_e P/\mu$, where $C_e$ only depends on $e$.
        \item $\bftau_\mu$ is linear in $P$: we have $\bftau_\mu(P,\bfxi) = P\bftau_\mu (1,\bfxi)$, in particular $|\bftau_\mu(P, \bfxi)- \bftau_\mu(P',\bfxi)|_F \leq c_\K |P-P'|$ and $\bftau_\mu(0,\cdot) \equiv 0$.
    \end{enumerate}
\end{lemma}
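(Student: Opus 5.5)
The plan is to reduce everything to the case $P=1$ using the structure $\bftau_\mu(P,\bfxi)=P\,\bftau_\mu(1,\bfxi)$. This is immediate from the defining formula, since $\Delta_\mu$ does not depend on $P$. Item 4 then follows at once: $|\bftau_\mu(P,\bfxi)-\bftau_\mu(P',\bfxi)|_F=|P-P'|\,|\bftau_\mu(1,\bfxi)|_F\le c_\K|P-P'|$, provided item 1 shows $\bftau_\mu(1,\bfxi)\in\K(1)$ so that Lemma \ref{lema:prop_K}(2) applies. Also $\bftau_\mu(0,\cdot)\equiv 0$ is clear. So I would prove item 1 first.

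For item 1, set $s:=\Delta(\bfxi)/\Delta_\mu\in[0,1]$ and compute the invariants. We have $\tau_I+P/2=\frac{P}{2}\xi_I/\Delta_\mu$ and $\dev\bftau_\mu=\frac{P}{e^2\Delta_\mu}\dev\bfxi$, so $\tau_{II}=\frac{P}{e^2\Delta_\mu}\cdot\frac{\xi_{II}}{2}$. Substituting into \eqref{eq:def_F} gives
\begin{align*}
F(\bftau_\mu,P)+1=\frac{\xi_I^2}{\Delta_\mu^2}+\frac{e^{-2}\xi_{II}^2}{\Delta_\mu^2}=s^2\le 1 .
\end{align*}
Hence $\bftau_\mu\in\K(P)$, and $F=0$ exactly when $\Delta(\bfxi)\ge\mu$. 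On that set $\Delta_\mu=\Delta(\bfxi)$, and the formula coincides with the maximiser $\bftau^*$ from the proof of Lemma \ref{lem:prop_D}. The conventions $\dev$ versus $\tau_{II}$ must be tracked carefully here, namely $|\dev\bftau|_F^2=2\tau_{II}^2$ and $|\dev\bfxi|_F^2=\xi_{II}^2/2$.

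For item 2, I would use $\bfI:\bfxi=\xi_I$, $\bfI:\dev\bfxi=0$ and $\dev\bfxi:\dev\bfxi=\xi_{II}^2/2$. These give
\begin{align*}
\bftau_\mu:\bfxi=\frac{P}{2}\Big(\frac{\xi_I^2}{\Delta_\mu}-\xi_I\Big)+\frac{P\xi_{II}^2}{2e^2\Delta_\mu}=\frac{P}{2}\Big(\frac{\Delta^2}{\Delta_\mu}-\xi_I\Big).
\end{align*}
Subtracting \eqref{eq:def_D} yields $\bftau_\mu:\bfxi-D=\frac P2\big(\frac{\Delta^2}{\Delta_\mu}-\Delta\big)$. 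This vanishes if $\Delta\ge\mu$, and equals $\frac P2\big(\frac{\Delta^2}{\mu}-\Delta\big)\ge -\frac P2\cdot\frac{\mu}{4}=-\frac{P\mu}{8}$ otherwise, by minimising $t^2/\mu-t$ at $t=\mu/2$.

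Item 3 is the main (mild) obstacle. The map is $\bfxi\mapsto P\big(\frac12(\xi_I/\Delta_\mu)\bfI+e^{-2}\dev\bfxi/\Delta_\mu\big)$ up to a constant, i.e.\ a fixed linear map $L$ applied to $g(\bfxi):=\bfxi/\max(|\bfxi|_\Delta,\mu)$. The map $g$ is the radial retraction onto the $\Delta$-ball of radius $1$ after scaling by $1/\mu$. Precisely, $g(\bfxi)=\mu^{-1}\pi(\bfxi)$, where $\pi(\bfxi)=\bfxi\,\min(1,\mu/|\bfxi|_\Delta)$ is the radial projection onto the ball of radius $\mu$ in the norm $|\cdot|_\Delta$. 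The radial retraction onto a ball in any norm is Lipschitz with constant at most $2$. The standard argument takes $|x|\ge|y|$ with $|x|>\mu$ and writes $\pi(x)-\pi(y)=\frac{\mu}{|x|}(x-y)+\big(\frac{\mu}{|x|}-\min(1,\frac{\mu}{|y|})\big)y$, where the second term is bounded by $\big||x|-|y|\big|\le|x-y|$. Hence $g$ is $2/\mu$-Lipschitz in $|\cdot|_\Delta$. Converting between $|\cdot|_\Delta$ and $|\cdot|_F$ and bounding the norm of $L$ introduces only constants depending on $e$, which gives the constant $C_eP/\mu$.
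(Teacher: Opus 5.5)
Your proposal is correct. Items 1, 2 and 4 follow the paper's proof essentially verbatim: the same invariants, the same evaluation $F(\bftau_\mu,P)+1=\Delta(\bfxi)^2/\Delta_\mu^2$, the same minimisation of $t^2/\mu-t$ at $t=\mu/2$, and item 4 via $|\bftau_\mu(1,\bfxi)|_F\le c_\K$.

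Item 3 is where you take a genuinely different route. The paper argues piecewise:
\begin{itemize}
\item on the open set $\{\Delta<\mu\}$ the map is affine;
\item on $\{\Delta>\mu\}$ the quotient rule gives $|\nabla\bftau_\mu|\le C_eP/\Delta\le C_eP/\mu$;
\item the two bounds are glued along segments, using continuity and the fact that a segment meets the interface $\{\Delta=\mu\}$ in at most two points, by convexity of $\{\Delta\le\mu\}$.
\end{itemize}
You instead factor $\bftau_\mu(P,\bfxi)+\frac P2\bfI=P\,L\bigl(\mu^{-1}\pi(\bfxi)\bigr)$, with a fixed linear $L$ and the radial retraction $\pi$ onto the $\Delta$-ball of radius $\mu$. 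You then use the elementary fact that radial retractions in any normed space are $2$-Lipschitz, and your case check of that fact is correct.

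Your factorisation avoids differentiability, the explicit gradient computation and the segment-crossing argument, and it gives an explicit constant. It can even be sharpened. Since $\Delta(\bfxi)^2=\xi_I^2+2e^{-2}|\dev\bfxi|_F^2$ comes from an inner product, $\pi$ is the metric projection onto a closed convex set in a Hilbert norm. It is therefore $1$-Lipschitz, and the factor $2$ can be dropped.

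The paper's piecewise calculus argument is more hands-on and does not rely on the regularised stress being a linear image of a radial retraction. It therefore adapts more directly to other cut-off profiles with controlled gradients.
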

\begin{proof}
    We calculate the invariants of $\bftau_\mu$
    \begin{align*}
        \tau_I = \frac{P}{2}\Big(\frac{\xi_I}{\Delta_\mu}-1\Big) \quad \text{and} \quad \tau_{II} = \frac{P}{2e^2\Delta_\mu}\xi_{II} \ .
    \end{align*}
    \textit{1.}
    For $P=0$ the stress is zero and the claim is immediate. For $P>0$
    we insert these invariants into \eqref{eq:def_F} and get
    \begin{align*}
        1\geq F(\bftau_\mu, P) +1 =\frac{\xi_I^2+e^{-2}\xi_{II}^2}{\Delta_\mu^2} = \frac{\Delta(\bfxi)^2}{\Delta_\mu^2} \ .
    \end{align*}
    For $\Delta \geq \mu$ we have $\Delta_\mu = \Delta$, the quotient
    is $1$ and $\bftau_\mu$ is the maximiser from the proof of Lemma \ref{lem:prop_D}.

    \textit{2.}
    A direct computation yields
    \begin{align*}
        \bftau_\mu : \bfxi = \frac{P}{2\Delta_\mu}(\xi_I^2+e^{-2}\xi_{II}^2) - \frac{P}{2}\xi_I \ .
    \end{align*}
    For $\Delta \geq \mu$ this coincides with $D(P,\bfxi)$.
    For $\Delta < \mu$, the gap satisfies
    \begin{align*}
        D(P, \bfxi) - \bftau_\mu : \bfxi = \frac{P}{2} \Big(\Delta -\frac{\Delta^2}{\mu}\Big) = \frac{P}{2}\Delta \Big(1-\frac{\Delta}{\mu}\Big) \leq \frac{P}{2} \frac{\mu}{4} = \frac{P\mu}{8} \ .
    \end{align*}
    \textit{3.}
    On the open set $\{\Delta < \mu\}$ we have $\Delta_\mu = \mu$ and $\bftau_\mu(P,\cdot)$ is affine with constant $C_e P/\mu$.
    On $\{\Delta >\mu\}$ we have $\bftau_\mu(P,\bfxi) = \frac{P}{2}\big(\Delta(\bfxi)^{-1}(\xi_I\bfI+2e^{-2}\dev\bfxi)-\bfI \big)$ and the
    quotient rule leads to $|\nabla \bftau_\mu| \leq C_e P/\Delta \leq C_e P/\mu$. Since $\bftau_\mu(P,\cdot)$ is continuous, $\{\Delta \leq \mu\}$ is convex and every segment $[\bfxi_1, \bfxi_2]$ cuts the interface in at most 2 points, a piecewise application of the fundamental theorem of calculus gives the global Lipschitz constant as the maximum of both constants.

    \textit{4.} $\bftau_\mu$ is linear in $P$ and the constant follows from \textit{1.}\ and Lemma \ref{lema:prop_K}: $|\bftau_\mu(1,\bfxi)|_F \leq c_\K$.

\end{proof}

\begin{lemma} \label{lem:bdd}
    Under Assumption \ref{ass}, the following statements hold:
    \begin{enumerate}
        \item $\D : V \rightarrow [0,\infty]$ is convex, positively homogeneous of degree 1, and weakly lower semicontinuous on $V$. Also $\D (0)=0$, so $\dom \D \neq \emptyset$.
        \item
        For $\bfv \in W^{1,3}(\Omega)^2$ with arbitrary trace, there is a relaxed
        boundary term
        \begin{align}
            \D (\bfv) = \int_\Omega D(P(x),\varepsilon(\bfv)(x)) \dd x + \int_{\Gamma_D} D(P(x), -(\bfv\odot \nu) ) \dd \mathcal{H}^1 \ , \label{eq:int_darstellung}
        \end{align}
        with $(\bfv \odot \nu) := \frac{1}{2}(\bfv \otimes \nu + \nu \otimes \bfv)$.
    \end{enumerate}
\end{lemma}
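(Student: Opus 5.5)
Part 1 follows from the structure of Definition \ref{def:D}. For each fixed $\bftau\in\Sigma$, the map $\bfv\mapsto -\int_\Omega \bfv\cdot\div\bftau\dd x$ is linear. It is continuous on $V$ because $\div\bftau\in L^{3/2}=V^*$. A pointwise supremum of continuous linear functionals is convex, positively homogeneous of degree one and lower semicontinuous in the strong topology. Convexity then upgrades this to weak lower semicontinuity, or one can argue directly: each functional is weakly continuous, and the supremum of weakly continuous functions is weakly lsc. Non-negativity and $\D(0)=0$ hold because $0\in\Sigma$ by Lemma \ref{lema:prop_K}, part 1, so the supremum is at least $0$ and vanishes at $\bfv=0$.

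For part 2 I will prove two inequalities.

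\emph{Upper bound.} Fix $\bfv\in W^{1,3}$ and $\bftau\in\Sigma$. I integrate by parts:
\[
-\int_\Omega\bfv\cdot\div\bftau\dd x=\int_\Omega\bftau:\varepsilon(\bfv)\dd x-\int_{\partial\Omega}\bftau\nu\cdot\bfv\dd\Hh^1 .
\]
This needs a Green formula for $\bftau\in L^\infty$ with $\div\bftau\in L^{3/2}$ and $\bfv\in W^{1,3}$. The normal trace $\bftau\nu$ lies in the dual of the trace space $W^{1-1/3,3}(\partial\Omega)$, and the $\Gamma_N$ part vanishes by definition of $\Sigma$. On the remaining part I use $\bftau\nu\cdot\bfv=\bftau:(\bfv\odot\nu)$ together with the pointwise bound $\bftau:\bfxi\le D(P,\bfxi)$ from Lemma \ref{lem:prop_D}. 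This gives the claimed bound, with boundary term $\int_{\Gamma_D}D(P,-\bfv\odot\nu)$.

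The delicate point is that $\bftau\nu$ is a priori only a distribution on $\Gamma_D$, not a function. To handle it, I first approximate $\bftau$ by smooth fields $\bftau_\delta$. I mollify after a slight inward dilation of $\K$, using $\K(P)$ convex and $P$ continuous, so that the $\bftau_\delta$ remain almost admissible. For smooth fields the pointwise argument applies, and I then pass to the limit. Alternatively, I may use that $\bftau\in L^\infty$ with $L^{3/2}$ divergence has an $L^\infty$ normal trace on Lipschitz boundaries (Anzellotti-type pairing), with $|\bftau\nu\cdot\bfv|\le$ the pointwise supremum. I expect this to be the main technical obstacle. The Neumann condition must also survive the approximation, so I would rather use the Anzellotti pairing, which gives the pointwise bound on $\Gamma_D$ directly via the density $\theta(\bftau,\nu)$.

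\emph{Lower bound.} I construct near-optimal stresses. In the interior I take $\bftau_\mu(P,\varepsilon(\bfv_j))$ from Lemma \ref{lem:hubler}, where $\bfv_j$ is smooth and approximates $\bfv$ in $W^{1,3}$. This stress is pointwise admissible. However, its divergence need not be in $L^{3/2}$ since $P$ is only continuous, so I also mollify $P$ in $x$ by $P_\delta\le P$. One way is to mollify $(P-\delta)^+$, using uniform continuity, which keeps admissibility by the monotonicity in Lemma \ref{lema:prop_K}. By \eqref{eq:hubler} the interior integral then approaches $\int_\Omega D(P,\varepsilon(\bfv))$ as $\mu,\delta\to0$ and $j\to\infty$.

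To capture the boundary term, I multiply this interior stress by a cutoff $1-\chi_\rho$ that vanishes in a $\rho$-layer near $\partial\Omega$. I then add a boundary-layer stress $\chi_\rho\bfsigma_\rho$, where $\bfsigma_\rho$ approximates the maximiser of $D(P,-\bfv\odot\nu)$ on $\Gamma_D$, extended inward and cut off away from $\Gamma_N$. Convexity of $\K(P)$ keeps the combination admissible. The gradient of $\chi_\rho$ then produces, in the limit $\rho\to0$, exactly the boundary term $\int_{\Gamma_D}D(P,-\bfv\odot\nu)$. This requires a localisation and flattening of $\partial\Omega$ via Lipschitz charts and a partition of unity. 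The hypothesis $\Hh^1(\partial\Omega\setminus(\Gamma_D\cup\Gamma_N))=0$ ensures that cutting off near $\Gamma_D\cap\bar\Gamma_N$ costs arbitrarily little. Combining both inequalities gives \eqref{eq:int_darstellung}.
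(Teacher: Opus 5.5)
Part 1 and your lower bound agree with the paper. The paper also combines an interior field $\bftau_\mu(P_\rho,\varepsilon(\bfv_k))$, built with a mollified strength $P_\rho\le P$, and a thin boundary-layer field $\bftau_\mu(P_\rho,G_j)$. Here $G_j$ are Lipschitz approximations of $-(\bfv\odot\nu)$ on $\Gamma_D$, and the layer field is cut off near $I=\bar\Gamma_D\cap\bar\Gamma_N$. The paper uses disjoint supports where you use a convex combination, which makes no difference. One minor correction: $\Gamma_D\cap\bar\Gamma_N$ is empty, and the cut-off costs little because $\Gamma_D\cap I=\emptyset$, not because of the null-set hypothesis.

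The gap is in the upper bound, at the step you flag yourself. In terms of an $L^\infty$ normal trace, you need $[\bftau\nu](x)\in\K(P(x))\nu(x)$ for $\mathcal{H}^1$-a.e.\ $x\in\Gamma_D$; this is what yields $-\bfv\cdot[\bftau\nu]\le D(P,-(\bfv\odot\nu))$. This constraint has three features that matter:
\begin{itemize}
\item it is one-sided, since $D(P,\cdot)$ is not even, so a bound on $|\bftau\nu\cdot\bfv|$ is the wrong statement;
\item it is anisotropic;
\item it depends on $x$: where $P$ vanishes on a piece of $\Gamma_D$, it forces the normal trace to vanish there.
\end{itemize}
Anzellotti's theorem only gives an $L^\infty$ normal trace with an isotropic norm bound in terms of $\Vert\bftau\Vert_{L^\infty}$. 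The density $\theta(\bfz,Du,\cdot)$ you invoke describes the interior pairing measure, not the boundary trace. So citing the pairing does not deliver the inequality.

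The paper proves it by running the slicing argument with the constraint built in (Lemmas \ref{lem:a2}--\ref{lem:prove_leq}):
\begin{itemize}
\item The weak Neumann condition gives $\langle\bftau\nu,\bfw\rangle=0$ whenever $\bfw$ vanishes near $\bar\Gamma_D$, so $\bfv$ may be replaced by $\zeta_\eta\bfv$.
\item In Lipschitz graph charts, Gauss's theorem is applied to $\bftau*\varphi_\rho$ on shifted domains $D_t$.
\item By Fubini, for a.e.\ $t$, $\mathcal{H}^1$-a.e.\ point of the slice $S_t$ is a Lebesgue point of $\bftau$ with $\bftau\in\K(P)$. Letting $\rho\to0$ then gives the support-function bound on $S_t$.
\item Next $t\to0$, using translation invariance of $\nu$, the ACL trace, continuity of $P$ and the Lipschitz bound for $D$.
\item Finally $\eta\to0$, using $\mathcal{H}^1(\bar\Gamma_D\setminus\Gamma_D)=0$.
\end{itemize}

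Your first option can be repaired the same way. After localising with $\zeta_\eta$, the smoothed field no longer has to satisfy any Neumann condition. An inward-shifted mollification in graph charts then takes values in $\K(P(x)+\omega_P(C\delta))$, by monotonicity of $P\mapsto\K(P)$. The slack has to come from the strength: since $0\in\partial\K(P)$ and $\K(0)=\{0\}$, shrinking $\K$ creates none.
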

\begin{proof}
    \textit{1.}
    For any $\bftau \in \Sigma$ the map $\bfv \mapsto \int_\Omega \bfv \cdot \div \bftau \dd x$ is linear and (weakly) $L^3$-continuous, since $\div\bftau\in V^*$. Then $\D$, as the pointwise
    supremum over such functionals, is convex and weakly lower semicontinuous. For the homogeneity we notice that
    $\Sigma$ is independent of $\bfv$ and the functional is linear in $\bfv$. Thus homogeneity of degree 1 holds, since
    the supremum scales accordingly. Since $0 \in \Sigma$ and $\div 0 =0$, we have $\D \geq 0$ and $\D (0) =0$.

    \textit{2.}
    The inequality "$\leq$" is proved in Lemma \ref{lem:prove_leq} in the appendix
    by a slicing argument \cite{A83}. We prove the reverse inequality.
    Let $\mu, \rho, h, \eta >0$ and $k,j \in \mathbb N$.

    Step 1: Let $I := \bar \Gamma_D \cap \bar \Gamma_N$. From the disjointness of $\Gamma_D$ and $\Gamma_N$
    follows $\Gamma_D \cap I = \emptyset$. Choose $\zeta_\eta \in \mathrm{Lip}(\mathbb R^2)$ with
    $\zeta_\eta=0$ in $U_{\eta/2}(I):=\{x\in \mathbb R^2 :\dist(x,I)<\eta/2\}$ and $\zeta_\eta =1$
    in $\mathbb R^2 \setminus U_{\eta}(I)$, $0\leq \zeta_\eta \leq 1$. Additionally let
    $\vartheta_h(x):= \psi (\dist(x,\bar\Gamma_D)/h)$ with $\psi \in \mathrm{Lip}(\mathbb R)$ and
    $\psi = 0$ in $[1,\infty)$, $\psi =1$  in $(-\infty,1/2]$ and $0\leq \psi \leq 1$.
    Set $K_\eta := \bar\Gamma_D \setminus U_{\eta/4}(I)$, which is compact, and
    since $K_\eta \cap \bar\Gamma_N \subseteq I \setminus U_{\eta/4}(I) = \emptyset$ we have
    $d_\eta := \dist(K_\eta,\bar\Gamma_N)>0$, with $d_\eta = \infty$ for $K_\eta = \emptyset$.

    For $h<\min (\eta/4,d_\eta)$, $\vartheta_h\zeta_\eta$ vanishes on a neighbourhood of $\bar\Gamma_N$.
    Indeed, if $\vartheta_h(x)\zeta_\eta(x) \neq 0$, then $\dist(x,\bar\Gamma_D)<h$ and $\dist(x,I)\geq \eta/2$,
    and for the nearest point $y\in \bar\Gamma_D$ we have
    \begin{align*}
        \dist(y,I) \geq \eta/2 - h \geq \eta/4 \ ,
    \end{align*}
    hence $y\in K_\eta$ and $\dist(x,\bar\Gamma_N) \geq d_\eta-h >0$.
    Let $\chi_h \in C^\infty_c(\Omega)$ with $0\leq \chi_h\leq 1$, $\chi_h =1$ on $\{x:\dist(x,\partial\Omega)\geq 3h\}$,
    $\supp \chi_h \subset \{x : \dist(x,\partial\Omega)\geq 2h \} $.
    The supports of $\chi_h$ and $\vartheta_h$ are disjoint.

    Step 2: Let $g := - (\bfv \odot \nu) \in L^3(\Gamma_D; \mathbb S)$. On the compact metric space
    $(\bar\Gamma_D,|\cdot|)$ with the finite measure $\mathcal{H}^1$, Lipschitz functions are dense in $L^3$.
    Choose Lipschitz functions $G_j$ with $\Vert G_j - g\Vert_{L^3(\Gamma_D)}\rightarrow 0$ and
    extend them componentwise (McShane \cite{M34}, in a basis of $\mathbb S$) Lipschitz continuously to $\mathbb R^2$. Additionally let
    $\bfv_k \in C^\infty(\mathbb R^2; \mathbb R^2)$ with $\bfv_k \rightarrow \bfv$ in $W^{1,3}(\Omega)$.
    By the Tietze extension theorem and multiplication with a compactly supported continuous cutoff
    equal to $1$ on $\bar\Omega$, choose $\widetilde P\in C_c(\mathbb R^2)$ with
    $\widetilde P=P$ on $\bar\Omega$ and $0\leq\widetilde P\leq\Vert P\Vert_{C(\bar\Omega)}$.
    Let
    \begin{align*}
        \omega_{\widetilde P}(r):=\sup_{|x-y|\leq r}|\widetilde P(x)-\widetilde P(y)|,
        \qquad x,y\in\mathbb R^2,
    \end{align*}
    so that $\omega_{\widetilde P}(r)\rightarrow0$ as $r\downarrow0$ by uniform continuity.
    For the standard non-negative mollifier $\varphi_\rho$ supported in $B_\rho(0)$, define
    \begin{align*}
        P_\rho := (\widetilde P * \varphi_\rho - \omega_{\widetilde P}(\rho))^+ \ .
    \end{align*}
    Then $P_\rho$ is Lipschitz continuous and
    \begin{align}
        0\leq P_\rho\leq P\quad\text{on}\ \bar\Omega,
        \qquad \Vert P-P_\rho\Vert_{C(\bar\Omega)}\leq2\omega_{\widetilde P}(\rho) \ .
        \label{eq:Prho}
    \end{align}
    Indeed, $|\widetilde P*\varphi_\rho-P|\leq\omega_{\widetilde P}(\rho)$ on $\bar\Omega$.
    Subtracting this modulus and taking the positive part gives
    $(P-2\omega_{\widetilde P}(\rho))^+\leq P_\rho\leq P$ there.

Step 3:
Define
\begin{align*}
    \bftau &:= \chi_h \bftau_\mu (P_\rho, \varepsilon(\bfv_k)) + \vartheta_h\zeta_\eta\bftau_\mu(P_\rho,G_j) \\
    &=: \bftau^i + \bftau^b \ .
\end{align*}
Both terms are Lipschitz continuous by Lemma \ref{lem:hubler}, pointwise admissible,
\begin{align*}
    \bftau_\mu (P_\rho,\cdot) \in \K(P_\rho) \subseteq \K(P) \ ,
\end{align*}
by Lemma \ref{lem:hubler} and Lemma \ref{lema:prop_K}. Scaling with a factor in $[0,1]$ preserves admissibility via
convexity and $0\in \K$. Additionally their supports are disjoint, so that the sum is pointwise
admissible and $\div \bftau \in L^\infty\subset L^{3/2}$. For $\bfw \in C^1(\bar \Omega)$ with
$\bfw_{|\Gamma_D} =0$, by integration by parts we have
\begin{align}
    \int_\Omega \bftau : \varepsilon(\bfw) \dd x + \int_\Omega \bfw \cdot \div \bftau\dd x
    = \int_{\partial\Omega} \bfw \cdot (\bftau \nu) \dd \mathcal{H}^1 = 0 \label{eq:ip}
\end{align}
since $\bfw$ vanishes on $\Gamma_D$ and $\bftau$ vanishes on $\bar\Gamma_N$ ($\bftau^i$ is compactly
supported and $\bftau^b$ because of Step 1 for $h<\min(\eta/4,d_\eta)$), and the remaining boundary set has measure zero
with $\bftau\nu \in L^\infty$. Thus $\bftau \in \Sigma$.

Step 4:
    Since $\bftau$ is Lipschitz continuous, approximation by $C^1(\bar\Omega)$ functions gives
    the Green formula for every $\bfv\in W^{1,3}(\Omega)$ with arbitrary trace:
    \begin{align*}
        -\int_\Omega\bfv\cdot\div\bftau\dd x
        =\int_\Omega\bftau:\varepsilon(\bfv)\dd x
        -\int_{\partial\Omega}\bfv\cdot(\bftau\nu)\dd\mathcal{H}^1 \ .
    \end{align*}
    With $\bfv\cdot(\bftau\nu)=\bftau:(\bfv\odot\nu)$, $\vartheta_h=1$ on $\bar\Gamma_D$
    and $\bftau=0$ on $\bar\Gamma_N$, we therefore obtain
    \begin{align*}
        \D(\bfv)&\geq-\int_\Omega\bfv\cdot\div\bftau\dd x\\
        &=\int_\Omega\chi_h\bftau_\mu(P_\rho,\varepsilon(\bfv_k)):\varepsilon(\bfv)\dd x
        +R_h+\int_{\Gamma_D}\zeta_\eta\bftau_\mu(P_\rho,G_j):g\dd\mathcal{H}^1,
    \end{align*}
    where $R_h:=\int_\Omega\bftau^b:\varepsilon(\bfv)\dd x$ is the volume contribution of the boundary field.
    Since $|\bftau^b|_F\leq c_\K\Vert P\Vert_{L^\infty}$ and $\bftau^b=0$ whenever
    $\dist(x,\bar\Gamma_D)\geq h$, it satisfies
    \begin{align*}
        |R_h|\leq r(h):=c_\K\Vert P\Vert_{L^\infty}
        \int_{\Omega\cap U_h(\bar\Gamma_D)}|\varepsilon(\bfv)|_F\dd x\rightarrow0
        \quad\text{as }h\downarrow0 \ .
    \end{align*}
    This follows by dominated convergence from $\varepsilon(\bfv)\in L^1(\Omega)$ and holds
    uniformly in the other approximation parameters. In the lower bound we retain the error $-r(h)$.

Step 5:
    Lemma \ref{lem:hubler}, H\"older and $P_\rho \leq \Vert P\Vert_{L^\infty}$ lead to
    \begin{align*}
        &\left|\int_\Omega \chi_h[\bftau_\mu(P_\rho, \varepsilon(\bfv_k)) - \bftau_\mu(P_\rho,\varepsilon(\bfv))]:\varepsilon(\bfv)\dd x\right|\\
        &\qquad\leq \frac{C_e \Vert P\Vert_{L^\infty(\Omega)}}{\mu} \Vert \varepsilon(\bfv_k)-\varepsilon(\bfv)\Vert_{L^{3/2}}
        \Vert \varepsilon(\bfv)\Vert_{L^3} \rightarrow 0
    \end{align*}
    as $k\rightarrow \infty$ and
    \begin{align*}
        \int_\Omega \chi_h\bftau_\mu (P_\rho, \varepsilon(\bfv)):\varepsilon(\bfv)\dd x \geq
        \int_\Omega \chi_h D(P_\rho, \varepsilon(\bfv))\dd x - \frac{\mu}{8} \Vert P \Vert_{L^1} \ .
    \end{align*}
    We apply the linearity of $D(\cdot,\bfxi)$ in the strength from Lemma \ref{lem:prop_D}, \eqref{eq:Prho} and
    $\Delta (\bfxi) - \xi_I \leq 2\Delta(\bfxi) \leq 2 \sqrt{2}|\bfxi|_F$ and conclude
    \begin{align*}
        \int_\Omega \chi_h|D(P,\varepsilon(\bfv))-D(P_\rho, \varepsilon(\bfv))|\dd x
        \leq 2 \sqrt{2}\omega_{\widetilde P}(\rho) \Vert\varepsilon(\bfv)\Vert_{L^{1}} \rightarrow 0
    \end{align*}
    as $\rho \rightarrow 0$.

Step 6:
    Pointwise a.e.\ on $\Gamma_D$ we have \eqref{eq:hubler}.  Lemma \ref{lem:hubler}
    and Lemma \ref{lema:prop_K}  lead to the $L^{\infty}$ bound
    $|\bftau_\mu (P_\rho, \cdot)|_F \leq c_\K \Vert P \Vert_{L^\infty} $. Applying these
    estimates together with the Lipschitz continuity from Lemma \ref{lem:prop_D} and the $P$-linearity
    we get the following chain of estimates
    \begin{align*}
        \bftau_\mu(P_\rho, G_j) :g &=  \bftau_\mu(P_\rho, G_j) : G_j + \bftau_\mu (P_\rho, G_j) :(g-G_j) \\
        &\geq D(P_\rho, G_j) - \frac{P_\rho\mu}{8} - c_\K \Vert P \Vert_{L^\infty} |g-G_j|_F \\
        &\geq D(P_\rho, g) - \frac{P_\rho\mu}{8} - 2c_\K \Vert P \Vert_{L^\infty} |g-G_j|_F \\
        &\geq D(P, g) - \frac{P_\rho\mu}{8} - 2c_\K \Vert P \Vert_{L^\infty} |g-G_j|_F - 2 \sqrt{2} \omega_{\widetilde P}(\rho)|g|_F \ .
    \end{align*}
    Multiply with $\zeta_\eta \in [0,1]$ (keep it in the main term, estimate it by $1$ in the error terms), integrate
    and estimate:
    \begin{align}
        \begin{split}
        \int_{\Gamma_D} \zeta_\eta \bftau_\mu (P_\rho, G_j) : g \dd \mathcal{H}^{1}
        &\geq \int_{\Gamma_D} \zeta_\eta D(P, g) \dd \mathcal{H}^{1}
        -\frac{\mu}{8} \Vert P \Vert_{L^{1}(\Gamma_D)}\\
        &\quad - C \Vert g-G_j\Vert_{L^{3}(\Gamma_D)} - C' \omega_{\widetilde P}(\rho) \ .
        \end{split}
    \end{align}

Step 7:

All limits are taken with the left-hand side $\D(\bfv)$ fixed. First we take the limits $k\rightarrow \infty$,
$j\rightarrow \infty$ and $\rho\rightarrow 0$. Combining Steps 4--6 yields
\begin{align*}
    \D(\bfv)\geq{}&\int_\Omega\chi_hD(P,\varepsilon(\bfv))\dd x
    +\int_{\Gamma_D}\zeta_\eta D(P,g)\dd\mathcal{H}^1\\
    &-\frac{\mu}{8}\left(\Vert P\Vert_{L^1(\Omega)}+\Vert P\Vert_{L^1(\Gamma_D)}\right)-r(h) \ .
\end{align*}
Then let $h\rightarrow0$ with fixed $\eta$. Since $0\leq\chi_h\leq1$ and $\chi_h\rightarrow1$ pointwise in $\Omega$,
dominated convergence applies to the volume term, while $r(h)\rightarrow0$ by Step 4.
The restriction $h<\min(\eta/4,d_\eta)$ is preserved for all sufficiently small $h$.
Then $\eta \rightarrow 0$: since $\Gamma_D \cap I = \emptyset$ (Step 1), we have $\zeta_\eta \rightarrow 1$ pointwise
on $\Gamma_D$, with the majorant $c_\K \Vert P \Vert_{L^{\infty}}|g|\in L^{1}(\Gamma_D)$. Finally $\mu \rightarrow 0$,
and both remaining error terms $\frac{\mu}{8}(\Vert P\Vert_{L^{1}(\Omega)}+\Vert P\Vert_{L^{1}(\Gamma_D)})$ vanish.
\end{proof}

\subsection{Stress reconstruction}
\begin{lemma}\label{lem:stress_reconstruction}
    Under Assumption \ref{ass}, let $\bfu\in V$ with $\D(\bfu)<\infty$ and
    $\bfxi\in V^*$. Then $\bfxi\in\partial\D(\bfu)$ if and only if there is
    a $\bfsigma\in\Sigma$ such that
    \begin{align}
        \bfxi=-\div\bfsigma,\qquad
        \D(\bfu)=-\int_\Omega\bfu\cdot\div\bfsigma\dd x \ .
        \label{eq:stress_saturation}
    \end{align}
\end{lemma}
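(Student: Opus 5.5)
The plan is to treat $\D$ as the support function of the convex set $A := \{-\div\bftau : \bftau\in\Sigma\}\subset V^*$ and use the standard characterisation of subdifferentials of support functions. Concretely, since $\D(\bfv)=\sup_{\bfeta\in A}\langle \bfeta,\bfv\rangle$, it coincides with the support function of the weak-$*$ (equivalently weak, as $V^*$ is reflexive) closure of $\mathrm{conv}\,A$. For a support function $\sigma_C$ of a closed convex set $C$, we have $\bfxi\in\partial\sigma_C(\bfu)$ iff $\bfxi\in C$ and $\langle\bfxi,\bfu\rangle=\sigma_C(\bfu)$. The second condition in \eqref{eq:stress_saturation} is exactly this saturation, so the proof reduces to showing that $A$ is convex and weakly closed in $V^*$.

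The easy direction comes first. If $\bfsigma\in\Sigma$ with $\bfxi=-\div\bfsigma$ and $\D(\bfu)=-\int\bfu\cdot\div\bfsigma$, then for every $\bfv\in V$ the definition \eqref{eq:def_diss} gives
\begin{align*}
\D(\bfv)\geq-\int_\Omega\bfv\cdot\div\bfsigma\dd x=\D(\bfu)+\int_\Omega\bfxi\cdot(\bfv-\bfu)\dd x,
\end{align*}
so $\bfxi\in\partial\D(\bfu)$. Convexity of $A$ is also immediate, because $\Sigma$ is convex: the sets $\K(P(x))$ are convex, the divergence condition is linear, and the weak traction condition is linear.

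For the converse, take $\bfxi\in\partial\D(\bfu)$. Positive homogeneity from Lemma \ref{lem:bdd} gives $\D(\bfu)=\int\bfxi\cdot\bfu$ (test with $\bfv=2\bfu$ and $\bfv=0$) and $\D(\bfv)\ge\int\bfxi\cdot\bfv$ for all $\bfv\in V$. If $\bfxi$ were not in $\bar A$ (weak closure), Hahn--Banach in the reflexive space $V^*$ would give $\bfv\in V$ with $\int\bfxi\cdot\bfv>\sup_{\bfeta\in A}\int\bfeta\cdot\bfv=\D(\bfv)$, a contradiction. Hence $\bfxi\in\bar A$. It then remains to show $\bar A=A$; once that holds, $\bfxi=-\div\bfsigma$ for some $\bfsigma\in\Sigma$, and the saturation identity is exactly $\D(\bfu)=\int\bfxi\cdot\bfu$.

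The main obstacle is the weak closedness of $A$. Since $A$ is convex, it suffices to prove strong closedness. Take $\bfsigma_n\in\Sigma$ with $-\div\bfsigma_n\to\bfeta$ in $L^{3/2}$. By Lemma \ref{lema:prop_K}, $|\bfsigma_n|_F\le c_\K\Vert P\Vert_{C(\bar\Omega)}$, so a subsequence converges weakly-$*$ in $L^\infty$ to some $\bfsigma$. The pointwise constraint $\bfsigma(x)\in\K(P(x))$ is a closed convex constraint, so the set of $L^\infty$ fields satisfying it is weak-$*$ closed. Testing with $\bfw\in C^\infty_c(\Omega)$ gives $-\div\bfsigma=\bfeta$ in the distributional sense. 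The weak Neumann condition, which I read as $\int\bfsigma:\varepsilon(\bfw)+\int\bfw\cdot\div\bfsigma=0$ for smooth $\bfw$ vanishing near $\Gamma_D$ (as in \eqref{eq:ip}), is linear and passes to the limit, since both terms converge. Therefore $\bfsigma\in\Sigma$ and $\bfeta\in A$. I would take some care that the weak formulation of the traction condition used in \eqref{eq:def_Sigma} is indeed of this test-function type, so that it is stable under weak-$*$ convergence of $\bfsigma$ combined with weak convergence of $\div\bfsigma$.
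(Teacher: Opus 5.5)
Your proposal is correct and matches the paper's proof. The paper also shows that $\{-\div\bftau:\bftau\in\Sigma\}$ is convex and norm closed, using the bound $|\bftau|_F\le c_\K P$, weak-$\ast$ compactness, closedness of the pointwise constraint, and passage to the limit in the divergence and the Green identity; it then characterises $\partial\D(\bfu)$ by Fenchel conjugacy of the support function, which is what your homogeneity-plus-separation argument unpacks by hand. On the point you flagged: the paper's weak traction condition tests against $C^1(\bar\Omega)$ fields with zero trace on $\Gamma_D$, not fields vanishing near $\Gamma_D$, but this linear condition passes to the limit in the same way.
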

\begin{proof}
    Set $C:=\{-\div\bftau:\bftau\in\Sigma\}\subset V^*$. This set is
    nonempty and convex. It is also norm closed. Indeed, if
    $-\div\bftau_j\rightarrow\bfxi$ in $V^*$, then
    $|\bftau_j|_F\leq c_\K P$ gives a subsequence converging weakly-$\ast$
    in $L^\infty$ to a symmetric field $\bfsigma$ by weak-$\ast$ compactness
    \cite[Chapter 3]{Br11}. The pointwise constraint
    $\bfsigma(x)\in\K(P(x))$ is preserved because it is closed and convex.
    Distributional testing gives $-\div\bfsigma=\bfxi$. Passing to the limit
    in the Green identity against $C^1(\bar\Omega)$ fields with zero
    Dirichlet trace preserves the weak Neumann condition. Thus
    $\bfsigma\in\Sigma$ and $\bfxi\in C$.

    By definition, $\D$ is the support function of $C$. Since $V$ is
    reflexive and $C$ is closed and convex, its Fenchel conjugate is the
    indicator function of $C$. By the conjugacy and subdifferential relations
    in \cite[Chapter I]{ET99}, Fenchel equality characterises
    $\bfxi\in\partial\D(\bfu)$ by $\bfxi\in C$ and
    $\langle\bfxi,\bfu\rangle=\D(\bfu)$, which is precisely
    \eqref{eq:stress_saturation}.
\end{proof}

\begin{remark}
    Equation \eqref{eq:stress_saturation} is the global saturation condition
    for velocities in $L^3$. If additionally
    $\bfu\in W^{1,3}(\Omega)^2$ has zero Dirichlet trace, the Green
    identity and \eqref{eq:int_darstellung} imply
    $\bfsigma:\varepsilon(\bfu)=D(P,\varepsilon(\bfu))$ a.e.\ Hence the
    pointwise normal flow rule holds in that regularity class. For general
    $L^3$ velocities, \eqref{eq:stress_saturation} is the relaxed formulation.
    The stress need not be unique: for full Dirichlet boundary, constant
    $P>0$ and zero velocity, both $0$ and $-P\bfI/2$ are admissible,
    divergence-free stresses satisfying the saturation condition.
\end{remark}

\subsection{Measure structure of finite dissipation}
Finite dissipation does not control the full deformation: isotropic
expansion has zero dissipation. Nevertheless, under spatial Lipschitz
regularity of $P$, suitable stress tests yield local measure structure and
a global bound on the negative part of the weighted divergence. This
additional regularity assumption is used only in the following theorem.

\begin{theorem}\label{th:measure_structure}
    Let $P\in C^{0,1}(\bar\Omega)$, $P\geq0$, and let $\bfv\in V$ with
    $\D(\bfv)<\infty$. Define the distributions
    \begin{align*}
        S&:=\div(P\bfv)-\bfv\cdot\nabla P,\\
        T_M&:=\div(PM\bfv)-(M\bfv)\cdot\nabla P
        \quad\text{for constant }M\in\mathbb S \ .
    \end{align*}
    Then the following statements hold.
    \begin{enumerate}
        \item[(a)] $S$ is a locally finite signed Radon measure in $\Omega$,
        and its negative part satisfies the sharp global bound
        \begin{align}
            S^-(\Omega)\leq\D(\bfv) \ .\label{eq:S-}
        \end{align}
        \item[(b)] For every deviatoric $M$, $T_M$ is a locally finite signed
        Radon measure. Set $c_M:=|M|_F/\sqrt{2}$, the stress II-invariant of
        $M$. If $\omega\Subset\Omega$ is open and
        $\chi\in C_c^\infty(\Omega)$ satisfies $0\leq\chi\leq1$ and
        $\chi=1$ on a neighbourhood of $\bar\omega$, then
        \begin{align}
            |T_M|(\omega)
            &\leq e c_M\bigl(2\D(\bfv)+\langle S,\chi\rangle\bigr)
            \notag\\
            &\leq e c_M\bigl(2\D(\bfv)+S^+(\supp\chi)\bigr) \ .
            \label{eq:TM}
        \end{align}
        For $M=0$, both sides are zero.
        \item[(c)] The symmetric tensor-valued distribution
        \begin{align}
            A:=\varepsilon(P\bfv)-\bfv\odot\nabla P\,\mathcal{L}^2
            \label{eq:weighted_deformation}
        \end{align}
        is a locally finite Radon measure. We use $P\varepsilon(\bfv)$ as
        shorthand for this distribution. Moreover,
        \begin{align*}
            P\bfv\in BD_{\mathrm{loc}}(\Omega),\qquad
            \bfv\in BD_{\mathrm{loc}}(\{P>0\}),
        \end{align*}
        and hence $\bfv\in BD_{\mathrm{loc}}(\{P>\gamma\})$ for every
        $\gamma>0$. Here $BD$ denotes the space of integrable vector fields
        whose distributional symmetric gradient is a finite Radon measure.
    \end{enumerate}
\end{theorem}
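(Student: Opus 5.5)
The plan is to test the definition of $\D$ with stresses of the form $\bftau=P\Phi$, where $\Phi\in C_c^\infty(\Omega;\mathbb S)$ satisfies $\Phi(x)\in\K(1)$ for all $x$. Such fields lie in $\Sigma$: by Lemma \ref{lema:prop_K}, $\bftau(x)\in P(x)\K(1)=\K(P(x))$. Since $P$ is Lipschitz, $\div\bftau\in L^\infty\subset L^{3/2}$, and compact support gives the weak Neumann condition trivially. For constant $B\in\mathbb S$ and $\varphi\in C_c^\infty(\Omega)$ we have $\div(\varphi PB)=PB\nabla\varphi+\varphi B\nabla P$. Therefore
\begin{align*}
 -\int_\Omega\bfv\cdot\div(\varphi PB)\dd x=\langle \div(PB\bfv)-(B\bfv)\cdot\nabla P,\varphi\rangle=:\langle T_B,\varphi\rangle ,
\end{align*}
and by linearity, $-\int\bfv\cdot\div(P\sum_i\varphi_iB_i)\dd x=\sum_i\langle T_{B_i},\varphi_i\rangle\le\D(\bfv)$ whenever $\sum_i\varphi_iB_i\in\K(1)$ pointwise. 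Note that $T_{-\bfI}=-S$ and $-\bfI\in\K(1)$, since $F(-\bfI,1)=0$. Two further facts will be used: $-\bfI/2\pm sM\in\K(1)$ for deviatoric $M$ and $s=1/(2ec_M)$, and $\K(1)$ is convex and contains $0$.

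For (a), take $\Phi=-\varphi\bfI$ with $0\le\varphi\le1$. This gives $-\langle S,\varphi\rangle\le\D(\bfv)$, and by scaling, $-\langle S,\varphi\rangle\le\D(\bfv)\Vert\varphi\Vert_\infty$ for all $\varphi\ge0$. To show that $S$ has order zero, fix $\omega\Subset\Omega$ and $\chi$ as in (b), and let $\varphi\in C_c^\infty(\omega)$. Both $\Vert\varphi\Vert_\infty\chi\pm\varphi$ are non-negative with sup norm at most $2\Vert\varphi\Vert_\infty$. Applying the one-sided bound to them yields
\begin{align*}
|\langle S,\varphi\rangle|\le\Vert\varphi\Vert_\infty\bigl(2\D(\bfv)+|\langle S,\chi\rangle|\bigr),
\end{align*}
so $S$ is a distribution of order zero and hence a locally finite Radon measure. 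The global bound \eqref{eq:S-} then follows from $S^-(\Omega)=\sup\{-\langle S,\varphi\rangle:\varphi\in C_c(\Omega),0\le\varphi\le1\}$, together with density of smooth functions in this class under uniform convergence on a common compact support.

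For (b), the key step is the mixed test field $\Phi=\chi(-\bfI/2)+s\psi M$ with $\psi\in C_c^\infty(\omega)$ and $|\psi|\le\chi$. Pointwise, $\Phi=\chi\bigl(-\bfI/2+s(\psi/\chi)M\bigr)$ where $\chi>0$, and $\Phi=0$ elsewhere. By convexity and $0\in\K(1)$, this places $\Phi\in\K(1)$. The test gives $-\tfrac12\langle S,\chi\rangle+s\langle T_M,\psi\rangle\le\D(\bfv)$, hence
\begin{align*}
\langle T_M,\psi\rangle\le ec_M\bigl(2\D(\bfv)+\langle S,\chi\rangle\bigr).
\end{align*}
The same bound holds for $-\psi$. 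This shows simultaneously that $T_M$ has order zero on $\omega$ (so it is a locally finite Radon measure, since $\omega$ is arbitrary) and that $|T_M|(\omega)=\sup\{\langle T_M,\psi\rangle:\psi\in C_c(\omega),|\psi|\le1\}$ obeys the first inequality in \eqref{eq:TM}. Here I would use that $\chi=1$ on $\omega$, so $|\psi|\le1$ implies $|\psi|\le\chi$, and approximate continuous $\psi$ by smooth ones. The second inequality follows from $\langle S,\chi\rangle\le S^+(\supp\chi)$, because $0\le\chi\le1$. The case $M=0$ is trivial. I expect the main subtlety to be precisely this step: choosing a single pointwise admissible field that couples the sign-changing $\psi$ to the positive trace part. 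Naive splitting into $T_M^\pm$ would only give a constant $4\D$ and the wrong $S$-term.

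For (c), observe that for symmetric constant $M$ we have $\langle A:M,\varphi\rangle=\langle T_M,\varphi\rangle$, and that $\tr A=S$. Decomposing $A=\tfrac12 S\,\bfI+\sum_{i=1}^2T_{M_i}M_i/|M_i|_F^2$ in an orthogonal basis $M_1,M_2$ of deviatoric matrices shows that $A$ is a locally finite Radon measure by (a) and (b). Since $\bfv\odot\nabla P\in L^3\subset L^1_{\mathrm{loc}}$ and $P\bfv\in L^3$, the identity $\varepsilon(P\bfv)=A+\bfv\odot\nabla P\,\mathcal L^2$ gives $P\bfv\in BD_{\mathrm{loc}}(\Omega)$.

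On the open set $\{P>0\}$, any compact $K$ admits a neighbourhood on which $P\ge\gamma>0$. There $g:=1/P$ (suitably cut off) is Lipschitz, and the product rule $\varepsilon(g\bfw)=g\,\varepsilon(\bfw)+\bfw\odot\nabla g$ applied to $\bfw=P\bfv$ shows $\varepsilon(\bfv)$ is a Radon measure near $K$. This gives $\bfv\in BD_{\mathrm{loc}}(\{P>0\})$, and the statement for $\{P>\gamma\}$ follows since it is an open subset. The product rule for a Lipschitz function times a $BD$ field I would justify by mollification.
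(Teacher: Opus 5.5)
Your proposal is correct and follows the paper's proof essentially step by step. The paper likewise tests with $-P\mu\bfI$ and the pair $(\chi\pm\varphi)/2$ in (a), and uses exactly your mixed field $-\frac{P\chi}{2}\bfI\pm\frac{P\varphi}{2e}\,M/c_M$ in (b); it verifies admissibility by the direct computation $F(\bftau_\pm,P)+1=(1-\chi)^2+\varphi^2\leq1$ instead of your convexity argument. Part (c) uses the same trace/deviatoric decomposition and Lipschitz product rule.

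The one part of the statement you do not address is the word ``sharp'' in (a). The paper takes a nonzero $0\leq P\in C_c^\infty(\Omega)$ and $\bfv(x)=-x$. Then $\varepsilon(\bfv)=-\bfI$ and $S^-=2P\,\mathcal{L}^2$, while \eqref{eq:int_darstellung} (whose boundary term vanishes) gives $\D(\bfv)=2\int_\Omega P\dd x=S^-(\Omega)$.
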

\begin{proof}
    The distributions are well defined because $P\bfv$ and
    $\bfv\cdot\nabla P$ belong to $L^3$. For scalar
    $\mu\in C_c^\infty(\Omega)$,
    \begin{align}
        \langle S,\mu\rangle
        =-\int_\Omega\bfv\cdot\nabla(P\mu)\dd x \ .
        \label{eq:ip_measure}
    \end{align}

    Step 1: If $0\leq\mu\leq1$, the compactly supported Lipschitz field
    $\bftau=-P\mu\bfI$ lies in $\Sigma$. Indeed, for $P>0$ its normalised
    yield expression is $(1-2\mu)^2\leq1$, and for $P=0$ it is zero.
    Consequently,
    \begin{align}
        -\langle S,\mu\rangle
        =-\int_\Omega\bfv\cdot\div\bftau\dd x\leq\D(\bfv) \ .
        \label{eq:S}
    \end{align}
    Fix a compact $K\Subset\Omega$ and choose
    $\chi\in C_c^\infty(\Omega)$ with $0\leq\chi\leq1$ and $\chi=1$
    near $K$. For $\varphi\in C_c^\infty(\Omega)$ with
    $\supp\varphi\subset K$ and $\Vert\varphi\Vert_{L^\infty}\leq1$, both
    $(\chi+\varphi)/2$ and $(\chi-\varphi)/2$ are admissible in
    \eqref{eq:S}. Therefore
    \begin{align*}
        |\langle S,\varphi\rangle|
        \leq2\D(\bfv)+\langle S,\chi\rangle
        \leq2\D(\bfv)+|\langle S,\chi\rangle| \ .
    \end{align*}
    Thus $S$ is a distribution of order zero on every compact subset and
    hence a locally finite signed Radon measure. The characterisation
    \begin{align*}
        S^-(\Omega)=\sup\{-\langle S,\mu\rangle:
        \mu\in C_c^\infty(\Omega),\ 0\leq\mu\leq1\}
    \end{align*}
    proves \eqref{eq:S-}. To see sharpness, choose nonzero
    $0\leq P\in C_c^\infty(\Omega)$ and $\bfv(x)=-x$. Then
    $\varepsilon(\bfv)=-\bfI$, $S^-=(P\div\bfv)^-\mathcal{L}^2
    =2P\mathcal{L}^2$, and \eqref{eq:int_darstellung}, whose boundary term
    vanishes here, gives $\D(\bfv)=2\int_\Omega P\dd x$.

    Step 2: Let $M\ne0$ be deviatoric and put $\widehat M=M/c_M$. For
    $\varphi\in C_c^\infty(\omega)$ with $|\varphi|\leq1$, define
    \begin{align*}
        \bftau_\pm=-\frac{P\chi}{2}\bfI
                    \ \pm\frac{P\varphi}{2e}\widehat M \ .
    \end{align*}
    We have $|\varphi|\leq\chi$, so for $P>0$
    \begin{align*}
        F(\bftau_\pm,P)+1=(1-\chi)^2+\varphi^2
        \leq(1-\chi)^2+\chi^2\leq1 \ .
    \end{align*}
    For $P=0$, both stresses are zero. They are Lipschitz and compactly
    supported, hence belong to $\Sigma$. Testing the definition of $\D$
    with the two signs yields
    \begin{align*}
        -\frac{1}{2}\langle S,\chi\rangle
        \ \pm\frac{1}{2e c_M}\langle T_M,\varphi\rangle
        \leq\D(\bfv) \ .
    \end{align*}
    Therefore
    \begin{align*}
        |\langle T_M,\varphi\rangle|
        \leq e c_M\bigl(2\D(\bfv)+\langle S,\chi\rangle\bigr) \ .
    \end{align*}
    This uniform estimate proves the local measure structure of $T_M$ and,
    on taking the supremum over $\varphi$, the first inequality in
    \eqref{eq:TM}. The second follows from $0\leq\chi\leq1$.
    The case $M=0$ is immediate.

    Step 3: By the distributional definitions, $\tr A=S$ and $M:A=T_M$
    for every constant deviatoric $M$. Two linearly independent deviatoric
    matrices together with $\bfI$ determine all components of $A$, so
    $A$ is a locally finite matrix-valued measure. As
    $\bfv\odot\nabla P\in L^3$, \eqref{eq:weighted_deformation} gives
    $P\bfv\in BD_{\mathrm{loc}}(\Omega)$. On every compact subset of
    $\{P>0\}$, the factor $P^{-1}$ is bounded and Lipschitz. The product
    rule for a Lipschitz function and a $BD$ field gives
    \begin{align*}
        \varepsilon(\bfv)
        =\varepsilon\bigl(P^{-1}(P\bfv)\bigr)=P^{-1}A,
    \end{align*}
    since the terms involving $\nabla P$ cancel. Thus
    $\bfv\in BD_{\mathrm{loc}}(\{P>0\})$. See also the $BD$ framework
    in \cite{RG80,KT83}.
\end{proof}

\section{Well-posedness of the time-discrete problem}\label{sec:td}
We define the potential of the local part,
\begin{align}
    \Phi(x,\bfv) := \frac{m(x)}{2k} |\bfv - \bfu^{n-1}(x)|^2  + \frac{c_o}{3} |\bfu_o (x) -\bfv|^3 - \bff(x)\cdot \bfv, \quad\text{for all }\bfv\in\mathbb R^2\ ,
\end{align}
and the (non-variational) operator $\mathcal{A} : V \rightarrow V^{*}$
\begin{align*}
    \mathcal{A} (\bfu) &:= \frac m k (\bfu -\bfu^{n-1}) + m f_c \bfe_3 \times \bfu + c_o |\bfu-\bfu_o|(\bfu-\bfu_o)\\
    &= \partial_v \Phi(\cdot,\bfu) + mf_c\bfe_3 \times \bfu +\bff \ .
\end{align*}
The weak form of \eqref{eq:time_discrete_model} is: find $\bfu \in V$ such that
\begin{align}
    \langle \mathcal{A}(\bfu)-\bff, \bfv-\bfu\rangle + \D(\bfv) -\D(\bfu) \geq 0 \quad \text{for all}\ \bfv\in V \ , \label{eq:vi_td}
\end{align}
an elliptic VI of the second kind. Since $\D$ is convex and $\mathcal{A}$ single-valued, it is equivalent
to the subdifferential inclusion
\begin{align}
    \bff - \mathcal{A} (\bfu) \in \partial\D(\bfu) \quad \text{in} \ V^{*} \ .
    \label{eq:sub_vi_td}
\end{align}
We first state a monotonicity estimate for the ocean drag.
\begin{lemma}\label{lem:drag}
    For any $a,b\in\mathbb R^{n}$ we have
    \begin{align}
        (|a|a-|b|b) \cdot (a-b) \geq \frac{1}{2} (|a|+|b|)|a-b|^{2} \geq \frac 1 2 |a-b|^{3} \ . \label{eq:ocean_drag}
    \end{align}
    Both inequalities are sharp.
\end{lemma}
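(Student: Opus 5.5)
The plan is to expand the left-hand side algebraically and reduce everything to a one-dimensional inequality in the scalars $s=|a|$, $t=|b|$ and the angle term $p=a\cdot b$.

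\textbf{Expansion.} Directly,
\begin{align*}
    (|a|a-|b|b)\cdot(a-b) = s^3+t^3-(s+t)\,p ,
\end{align*}
while
\begin{align*}
    \tfrac12(s+t)|a-b|^2=\tfrac12(s+t)(s^2+t^2-2p).
\end{align*}
The terms $-(s+t)p$ appear identically on both sides and cancel. The first inequality is therefore equivalent to
\begin{align*}
    s^3+t^3\geq\tfrac12(s+t)(s^2+t^2).
\end{align*}
Writing $s^3+t^3=(s+t)(s^2-st+t^2)$, this becomes $(s+t)\tfrac12(s-t)^2\geq0$, which holds. Equality occurs exactly when $s=t$ or $s+t=0$, so any pair with $|a|=|b|$, e.g.\ $b=-a$, attains equality. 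This gives sharpness of the first inequality.

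\textbf{Second inequality.} It reduces to $|a|+|b|\geq|a-b|$ after multiplying by $\tfrac12|a-b|^2\geq0$. This is the triangle inequality, with equality for $b=-\lambda a$, $\lambda\geq0$ (antiparallel vectors). In particular, $b=-a$ gives equality in both inequalities simultaneously, so both are sharp with the constant $\tfrac12$, which cannot be improved.

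\textbf{Main obstacle.} The only point requiring care is noticing that the mixed term $p$ cancels exactly, so no estimate on the angle is needed. After that, everything is elementary, and there is no real obstacle.
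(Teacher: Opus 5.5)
Your proof is correct and follows essentially the paper's argument: the paper also expands the left-hand side, observes that the angular contribution $(|a|+|b|)|a||b|(1-\cos\theta)$ appears identically in both expressions, so that the difference is $\frac12(|a|+|b|)(|a|-|b|)^2\geq0$, and then uses the triangle inequality for the second bound, with the same equality cases $|a|=|b|$ and $b=-a$. Working with $p=a\cdot b$ instead of $\cos\theta$ is only a cosmetic improvement, since it avoids the undefined angle when $a$ or $b$ vanishes.
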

\begin{proof}
    Write $a\cdot b = |a||b|\cos(\theta)$, then we have
 \begin{align*}
      (|a|a-|b|b) \cdot (a-b) &= |a|^{3} + |b|^{3} -(|a|+|b|)|a||b|\cos(\theta) \\
      &= (|a| + |b|) (|a| - |b|)^{2} +(|a|+|b|)|a||b|(1-\cos(\theta)) \ .
 \end{align*}
 Additionally $|a-b|^2 = (|a|-|b|)^2+2|a||b|(1-\cos\theta)$, so
 \begin{align*}
    &(|a|+|b|)(|a|-|b|)^2 + (|a|+|b|)|a||b|(1-\cos\theta)\\
    &\qquad\geq \frac 1 2
    (|a|+|b|)\left((|a|-|b|)^2+2|a||b|(1-\cos\theta)\right)\\
    &\qquad= \frac 1 2 (|a|+|b|)|a-b|^{2} \ .
 \end{align*}
 The second inequality in \eqref{eq:ocean_drag} follows by the triangle inequality. The first inequality is sharp for $|a|=|b|$ and the second one
 for $b=-a$.
\end{proof}
\section{Existence, uniqueness and stability}
The following theorem establishes well-posedness of the time-discrete
problem. Both $P$ and $m$ may vanish, and $\Gamma_D=\emptyset$ is allowed.
Coercivity is provided solely by the ocean drag.
\begin{theorem}\label{th:td}
    Under Assumption \ref{ass}, the variational inequality \eqref{eq:vi_td} has a unique solution $\bfu \in V$. The following statements hold:
    \begin{enumerate}
        \item Let $\bfu_1, \bfu_2$ be the solutions corresponding to $\bff_1,\bff_2\in V^*$. Then
        \begin{align*}
            \Vert \bfu_1 -\bfu_2\Vert_{L^3}^2 \leq \frac{2}{c_o} \Vert \bff_1-\bff_2\Vert_{L^{3/2}} \ .
        \end{align*}
        Thus the solution map $\bff \mapsto \bfu$ is H\"older continuous with exponent $\frac 1 2$.
    \item In the interior of $\{P=0\}$, \eqref{eq:sub_vi_td} reduces to the pointwise algebraic
    equation
    \begin{align*}
        \frac m k (\bfu - \bfu^{n-1}) + m f_c \bfe_3 \times \bfu + c_o |\bfu-\bfu_o|(\bfu-\bfu_o) = \bff
    \end{align*}
    which is uniquely solvable.
    \item Let $\bfu_1,\bfu_2$ be the solutions corresponding to the ocean
    velocities $\bfu_o^{(1)},\bfu_o^{(2)}$, with all other data fixed. Set
    $\bfeta:=\bfu_o^{(1)}-\bfu_o^{(2)}$ and $\bfz_i:=\bfu_i-\bfu_o^{(i)}$.
    Then
    \begin{align}
        \Vert\bfz_1-\bfz_2\Vert_{L^{3}}^3 &\leq
        4 (\Vert\bfz_1\Vert_{L^3}+\Vert\bfz_2\Vert_{L^3})\Vert\bfeta\Vert_{L^3}^2\label{eq:first_uo}\\
        \frac{1}{k}\Vert\sqrt{m}(\bfu_1-\bfu_2)\Vert^2_{L^2}
        &\leq c_o(\Vert\bfz_1\Vert_{L^3}+\Vert\bfz_2\Vert_{L^3})\Vert\bfeta\Vert^2_{L^3}\label{eq:second_uo}\\
        \Vert\bfu_1-\bfu_2\Vert_{L^3}&\leq \Vert\bfeta\Vert_{L^3} +(4(\Vert\bfz_1\Vert_{L^3}+\Vert\bfz_2\Vert_{L^3}))^{1/3}\Vert\bfeta\Vert_{L^3}^{2/3} \ . \label{eq:third_uo}
    \end{align}
    On bounded subsets of the ocean-velocity data space $L^3$, the solution
    map $\bfu_o\mapsto\bfu$ is H\"older continuous with exponent $2/3$ in
    $L^3$ and Lipschitz continuous with respect to the mass seminorm
    $\Vert\sqrt m\,\cdot\Vert_{L^2}$, with the other data fixed.
    In the interior of $\{P=0\}$ we have pointwise a.e.
    \begin{align*}
        |\bfu_1-\bfu_2|\leq \sqrt{1+(kf_c)^2}|\bfu_o^{(1)}-\bfu_o^{(2)}| \ .
    \end{align*}
    \end{enumerate}
\end{theorem}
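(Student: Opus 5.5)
Existence will follow from the theory of maximal monotone operators. The operator $\mathcal{A}:V\to V^*$ is well defined: $m\in L^\infty$ gives $m\bfu\in L^3\subset L^{3/2}$ on the bounded domain. The term $m\bfu^{n-1}=\sqrt m\,\sqrt m\bfu^{n-1}$ lies in $L^2\subset L^{3/2}$, and $|\bfu-\bfu_o|(\bfu-\bfu_o)\in L^{3/2}$. The operator is continuous and hence hemicontinuous. It is monotone: the Coriolis term is skew, since $(\bfe_3\times\bfw)\cdot\bfw=0$; the mass term is nonnegative; and Lemma \ref{lem:drag} with $a=\bfu_1-\bfu_o$, $b=\bfu_2-\bfu_o$ gives
\begin{align*}
\langle\mathcal{A}(\bfu_1)-\mathcal{A}(\bfu_2),\bfu_1-\bfu_2\rangle\geq \frac{c_o}{2}\Vert\bfu_1-\bfu_2\Vert_{L^3}^3 .
\end{align*}
The same estimate with $\bfu_2=0$ yields coercivity,
\begin{align*}
\langle\mathcal{A}(\bfu),\bfu\rangle\geq \tfrac{c_o}{2}\Vert\bfu\Vert_{L^3}^3-C(1+\Vert\bfu\Vert_{L^3}),
\end{align*}
with $C$ depending on $\Vert\mathcal{A}(0)\Vert_{V^*}$. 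Lemma \ref{lem:bdd}(1) shows that $\D$ is proper, convex and lower semicontinuous with $\D\geq0=\D(0)$. The standard existence theorem for variational inequalities of the second kind with a monotone, hemicontinuous, coercive operator in a reflexive space (Browder--Minty plus Rockafellar's sum theorem, or a Lions--Stampacchia type result) therefore gives a solution of \eqref{eq:vi_td}.

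For uniqueness and part 1, I will test the inequality for $\bfu_1$ with $\bfv=\bfu_2$, test the one for $\bfu_2$ with $\bfv=\bfu_1$, and add. The $\D$ terms cancel; they are finite because each solution has $\D(\bfu_i)<\infty$, as seen by testing with $\bfv=0$. This gives
\begin{align*}
\langle\mathcal{A}(\bfu_1)-\mathcal{A}(\bfu_2),\bfu_1-\bfu_2\rangle\leq\langle\bff_1-\bff_2,\bfu_1-\bfu_2\rangle .
\end{align*}
The monotonicity bound and Hölder then yield $\frac{c_o}{2}\Vert\bfu_1-\bfu_2\Vert_{L^3}^3\leq\Vert\bff_1-\bff_2\Vert_{L^{3/2}}\Vert\bfu_1-\bfu_2\Vert_{L^3}$, which is the claimed estimate; taking $\bff_1=\bff_2$ gives uniqueness.

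For part 2, I will use Lemma \ref{lem:stress_reconstruction}: $\bff-\mathcal{A}(\bfu)=-\div\bfsigma$ with $\bfsigma\in\Sigma$. Since $|\bfsigma|_F\leq c_\K P$, the stress vanishes on $\{P=0\}$, so $\div\bfsigma=0$ distributionally in its interior and the pointwise equation holds. Unique pointwise solvability follows because the map $\bfw\mapsto\frac mk\bfw+mf_c\bfe_3\times\bfw+c_o|\bfw-\bfu_o|(\bfw-\bfu_o)$ on $\mathbb R^2$ is continuous, strictly monotone and coercive.

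For part 3, subtracting the two tested inequalities as before, with $\bff$ fixed, leaves
\begin{align*}
\frac1k\Vert\sqrt m(\bfu_1-\bfu_2)\Vert_{L^2}^2+c_o\int(|\bfz_1|\bfz_1-|\bfz_2|\bfz_2)\cdot(\bfu_1-\bfu_2)\leq0 .
\end{align*}
Writing $\bfu_1-\bfu_2=(\bfz_1-\bfz_2)+\bfeta$, Lemma \ref{lem:drag} bounds the first part of the drag integral from below by $\frac12\int(|\bfz_1|+|\bfz_2|)|\bfz_1-\bfz_2|^2$. The remaining cross term $c_o\int(|\bfz_1|\bfz_1-|\bfz_2|\bfz_2)\cdot\bfeta$ I will control via $\big||a|a-|b|b\big|\leq(|a|+|b|)|a-b|$ and Young's inequality with the weight $w=|\bfz_1|+|\bfz_2|$:
\begin{align*}
\int w|\bfz_1-\bfz_2||\bfeta|\leq \tfrac14\int w|\bfz_1-\bfz_2|^2+\int w|\bfeta|^2 .
\end{align*}
Absorbing, and using Hölder $\int w|\bfeta|^2\leq\Vert w\Vert_{L^3}\Vert\bfeta\Vert_{L^3}^2$ together with $\frac12|\bfz_1-\bfz_2|^3\leq\frac12 w|\bfz_1-\bfz_2|^2$, should give \eqref{eq:first_uo} and \eqref{eq:second_uo}. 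The main obstacle is getting exactly the stated constants $4$ and $c_o$: the Young split has to be tuned so that the $\sqrt m$ term and the $L^3$ term are each extracted with the claimed constant. If one split does not produce both, I will do them separately: drop the mass term for \eqref{eq:first_uo}, and drop the drag-positive part for \eqref{eq:second_uo}. Then \eqref{eq:third_uo} follows from the triangle inequality $\bfu_1-\bfu_2=\bfz_1-\bfz_2+\bfeta$. Hölder continuity on bounded sets uses the a priori bound on $\Vert\bfz_i\Vert_{L^3}$ from coercivity. The pointwise bound in $\{P=0\}^\circ$ follows by subtracting the two algebraic equations, dotting with a suitable vector, and using monotonicity of $w\mapsto|w|w$, which gives $|(1+kf_c\bfe_3\times)(\bfu_1-\bfu_2)|\lesssim|\bfeta|$-type control with norm $\sqrt{1+(kf_c)^2}$.
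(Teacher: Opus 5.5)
Up to and including \eqref{eq:first_uo}--\eqref{eq:third_uo}, your proposal is the paper's proof. Existence comes from a monotone-operator theorem for variational inequalities of the second kind; the paper cites Zeidler's Theorem 54.A, which is interchangeable with your Browder--Minty/Rockafellar route. Uniqueness and part 1 come from cross-testing, part 2 from Lemma \ref{lem:stress_reconstruction}, and part 3 from the same Young split. Your coercivity bound via $\mathcal{A}(0)$ is shorter than the paper's explicit Young computation, which the paper prefers because it reuses it in Lemma \ref{lem:DES}. You need not worry about the constants: the split with weights $\tfrac14$ and $1$ already gives
\[
\frac1k\Vert\sqrt m\,(\bfu_1-\bfu_2)\Vert_{L^2}^2+\frac{c_o}{4}\int_\Omega w|\bfz_1-\bfz_2|^2\dd x\leq c_o\int_\Omega w|\bfeta|^2\dd x ,
\]
and both \eqref{eq:first_uo} and \eqref{eq:second_uo} follow from it directly. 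The fallback you propose for \eqref{eq:second_uo}, dropping the positive drag part, would in fact fail, because that part is what absorbs the cross term.

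The step you do not actually carry out is the pointwise bound in the interior of $\{P=0\}$. Write $\delta:=\bfu_1-\bfu_2$. You leave the test vector unspecified, and the inequality you aim for bounds $|\delta+kf_c\bfe_3\times\delta|=\sqrt{1+(kf_c)^2}\,|\delta|$ by $|\bfeta|$ with that same factor. That would give at least $|\delta|\leq|\bfeta|$, which monotonicity does not yield and which can fail for large $kf_c$; the factor belongs on the $\bfeta$ side. Take the scalar product of the difference equation $\frac mk(\delta+kf_c\bfe_3\times\delta)+c_o(|\bfz_1|\bfz_1-|\bfz_2|\bfz_2)=0$ with $\bfz_1-\bfz_2=\delta-\bfeta$. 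The drag term is non-negative by Lemma \ref{lem:drag}, and $(\bfe_3\times\delta)\cdot\delta=0$, so
\[
\frac mk|\delta|^2\leq\frac mk(\delta+kf_c\bfe_3\times\delta)\cdot\bfeta\leq\frac mk\sqrt{1+(kf_c)^2}\,|\delta||\bfeta| .
\]
This inequality says nothing where $m=0$, and you must treat that case separately. There both algebraic equations read $c_o|\bfz_i|\bfz_i=\bff$, so injectivity of $\bfz\mapsto|\bfz|\bfz$ gives $\bfz_1=\bfz_2$, that is, $\delta=\bfeta$.
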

\begin{proof}
    We verify the assumptions of the existence theorem for elliptic variational inequalities
    of the second kind with monotone main term \cite[Theorem 54.A]{Z85}. We recall that $V$ is reflexive and separable, and according to Lemma \ref{lem:bdd} the functional $\D:V\rightarrow [0,\infty]$ is convex, lower semicontinuous and proper. We show that $\mathcal{A} : V \rightarrow V^{*}$
    is bounded, hemicontinuous, monotone and coercive.

    Step 1:
    Pointwise we can estimate
    \begin{align*}
        |\mathcal{A}(\bfv)| \leq \frac{\Vert m\Vert_{L^{\infty}}}{k} |\bfv| + \frac{1}{k} \sqrt{\Vert m\Vert_{L^{\infty}}} \sqrt{m} |\bfu^{n-1}| + \Vert m f_c\Vert_{L^{\infty}} |\bfv| +c_o (|\bfv|+|\bfu_o|)^{2} \ .
    \end{align*}
    Since $\Omega$ is a bounded domain, we have the embeddings $L^{3} \hookrightarrow L^{3/2}$ and $L^{2} \hookrightarrow L^{3/2}$. Together with Assumption \ref{ass} (A4) we have
    $\mathcal{A}(\bfv)\in L^{3/2}(\Omega)^2=V^*$ with $\Vert \mathcal{A}(\bfv)\Vert_{V^{*}} \leq C (1+\Vert \bfv\Vert^{2}_V)$, so $\mathcal{A}$ is bounded.

    Step 2: We show for $\bfu, \bfv, \bfw \in V$ that the function $t \mapsto \langle \mathcal{A}(\bfu +t \bfw),\bfv\rangle$ is continuous on $[0,1]$. The integrands are continuous in $t$ and we have an integrable majorant independent of $t$:
    \begin{align*}
        C\big((|\bfu| + |\bfw| +|\bfu_o|+1)^{2} + \sqrt{m}|\bfu^{n-1}|/k \big) |\bfv| \ .
    \end{align*}
    By Lebesgue's theorem we conclude the hemicontinuity.

    Step 3:
    Let $\bfv, \bfw\in V$, $\delta := \bfv-\bfw$. The mass term gives
    \begin{align*}
        \int_\Omega \frac m k |\delta|^{2} \dd x \geq 0 \ .
    \end{align*}
    The Coriolis term is pointwise skew-symmetric, $(mf_c\bfe_3 \times \delta) \cdot \delta =0$, and
    Lemma \ref{lem:drag} leads to the estimate
    \begin{align*}
        \langle \mathcal{A}(\bfv)-\mathcal{A}(\bfw), \delta\rangle \geq \frac{c_o}{2} \int_\Omega
        (|\bfv-\bfu_o|+|\bfw-\bfu_o|)|\delta|^2 \dd x \geq \frac{c_o}{2} \Vert\delta\Vert_{L^{3}}^3
      \end{align*}
    for the ocean drag. Therefore $\mathcal{A}$ is strictly monotone.

    Step 4: To prove the coercivity we estimate with Young's inequality for $\bfv\in V$ and
    $d := \bfv -\bfu_o$:
    \begin{align*}
        &\langle \mathcal{A} (\bfv) - \bff , \bfv \rangle\\
        &\quad= \int_\Omega \left(\frac m k (\bfv-\bfu^{n-1})\cdot \bfv + c_o |d| d\cdot(d+\bfu_o)-\bff \cdot \bfv\right) \dd x\\
        &\quad\geq \int_\Omega \left(\frac{m}{2k} |\bfv|^{2} - \frac{m}{2k}|\bfu^{n-1}|^{2} + c_o |d|^3 - c_o |d|^2|\bfu_o|
        -|\bff|(|d|+|\bfu_o|)\right) \dd x\\
        &\quad\geq \frac{c_o}{3} \Vert d \Vert^3_{L^3} - \frac{1}{2k} \Vert \sqrt{m}\bfu^{n-1}\Vert^2_{L^2}
        - \frac{4c_o}{3} \Vert \bfu_o\Vert_{L^3}^3\\
        &\qquad - \frac{2}{3\sqrt{c_o}} \Vert \bff \Vert_{L^{3/2}}^{3/2}
        -\Vert\bff\Vert_{L^{3/2}} \Vert \bfu_o\Vert_{L^3} \ ,
    \end{align*}
    where we applied the Young inequalities $c_o|d|^2|\bfu_o| \leq \frac{c_o}{3}|d|^3 + \frac{4c_o}{3}|\bfu_o|^3$ and
    $|\bff||d| \leq \frac{c_o}{3}|d|^3 + \frac{2}{3\sqrt{c_o}}|\bff|^{3/2}$.
    The triangle inequality $\Vert \bfv\Vert^3_{L^3} \leq 4(\Vert d\Vert_{L^3}^3 + \Vert\bfu_o\Vert^3_{L^3})$
    leads to
    \begin{align*}
        \langle \mathcal{A}(\bfv) - \bff, \bfv - 0\rangle \geq \frac{c_o}{12}\Vert\bfv\Vert^3_{L^3} - C_0
    \end{align*}
    with $C_0 = C_0 (\Vert \sqrt{m}\bfu^{n-1}\Vert_{L^2}, \Vert\bfu_o\Vert_{L^3}, \Vert\bff\Vert_{L^{3/2}}, k, c_o)$.
    This shows the coercivity of $\mathcal{A}$.

    Step 5: The result in \cite{Z85} gives existence, and strict monotonicity gives uniqueness.

    Step 6: For the stability, we take $\bff_1, \bff_2$ and their solutions $\bfu_1, \bfu_2$.
    We test the variational inequality for $\bfu_1$ with $\bfv=\bfu_2$ and conversely, and add the two inequalities.
    This gives the stability estimate:
    \begin{align*}
        \frac{c_o}{2} \Vert \bfu_1-\bfu_2\Vert_{L^{3}}^3 \leq \langle\bff_1-\bff_2,\bfu_1-\bfu_2\rangle
        \leq \Vert \bff_1-\bff_2\Vert_{L^{3/2}}\Vert\bfu_1-\bfu_2\Vert_{L^3} \ .
    \end{align*}

    Step 7: For $\bftau\in\Sigma$ we have $|\bftau|\leq c_\K P=0$ a.e.\ in $\{P=0\}$ by Lemma \ref{lema:prop_K}.
    By Lemma \ref{lem:stress_reconstruction}, the subgradient
    $\bff-\mathcal{A}(\bfu)$ equals $-\div\bfsigma$ for some $\bfsigma\in\Sigma$.
    On the open set $\operatorname{int}\{P=0\}$ this stress vanishes, so its
    divergence is zero there in distributions and hence a.e.\ This gives
    the stated pointwise algebraic equation. For fixed $x$ the map $\bfv \mapsto \frac m k \bfv + m f_c\bfe_3\times \bfv
    +c_o |\bfv-\bfu_o|(\bfv-\bfu_o)$ is by Lemma \ref{lem:drag} strictly monotone, continuous and coercive on
    $\mathbb R^2$, hence bijective.

    Proof of \textit{3.}: Set $\delta:=\bfu_1-\bfu_2$. Testing as in Step 6 eliminates
    $\D$ and $\bff$. The Coriolis term vanishes and we have
    \begin{align*}
        \frac{1}{k}\Vert\sqrt{m}\delta\Vert^2_{L^2} +
         \langle c_o|\bfz_1|\bfz_1-c_o|\bfz_2|\bfz_2,\delta\rangle\leq 0 \ .
    \end{align*}
    We notice $\delta = (\bfz_1-\bfz_2)+\bfeta$. With $\omega := |\bfz_1|+|\bfz_2|$ we apply the first estimate in \eqref{eq:ocean_drag}
    \begin{align*}
        \langle c_o|\bfz_1|\bfz_1-c_o|\bfz_2|\bfz_2,\bfz_1-\bfz_2\rangle \geq \frac{c_o}{2}\int_\Omega \omega |\bfz_1-\bfz_2|^2\dd x
    \end{align*}
    and from the identity $|a|a-|b|b = |a|(a-b)+(|a|-|b|)b$ we conclude the pointwise estimate
    \begin{align*}
        \left|c_o|\bfz_1|\bfz_1-c_o|\bfz_2|\bfz_2\right|\leq c_o \omega |\bfz_1-\bfz_2| \ .
    \end{align*}
    Young's inequality, $c_o \omega |\bfz_1-\bfz_2||\bfeta|\leq \frac{c_o}{4}\omega |\bfz_1-\bfz_2|^2+c_o\omega|\bfeta|^2$ and H\"older lead to
    \begin{align*}
        \frac{1}{k}\Vert\sqrt{m}\delta\Vert^2_{L^2} + \frac{c_o}{4} \int_\Omega\omega |\bfz_1-\bfz_2|^2\dd x
        &\leq c_o \int_\Omega \omega|\bfeta|^2 \dd x\\
        &\leq c_o(\Vert\bfz_1\Vert_{L^3}+\Vert\bfz_2\Vert_{L^3})\Vert\bfeta\Vert^2_{L^3} \ .
    \end{align*}
    Since $|\bfz_1-\bfz_2|\leq \omega$ we have $\int_\Omega\omega|\bfz_1-\bfz_2|^2\dd x \geq \Vert\bfz_1-\bfz_2\Vert^3_{L^3}$, so we conclude \eqref{eq:first_uo} and \eqref{eq:second_uo}. The estimate \eqref{eq:third_uo} follows from $\delta = \bfz_1-\bfz_2 + \bfeta$.

    The coercivity estimate, tested at the solutions, bounds
    $\Vert\bfz_i\Vert_{L^3}$ uniformly when the ocean velocities range over
    a bounded subset of $L^3$ and the other data are fixed. This justifies
    the local continuity assertions in the theorem.

    In the interior of $\{P=0\}$ both solutions satisfy the pointwise equation in \textit{2.}. Subtracting the two equations gives
    \begin{align*}
        \frac{m}{k}\delta +mf_c\bfe_3\times \delta + c_o|\bfz_1|\bfz_1-c_o|\bfz_2|\bfz_2 =0 \ .
    \end{align*}
    Taking the scalar product with $\bfz_1-\bfz_2=\delta-\bfeta$, using monotonicity $(c_o|\bfz_1|\bfz_1-c_o|\bfz_2|\bfz_2)\cdot(\bfz_1-\bfz_2)\geq 0$ and $(\bfe_3\times\delta)\cdot\delta=0$, we obtain
    \begin{align*}
        \frac{m}{k}|\delta|^2 \leq \frac{m}{k}(\delta+kf_c\bfe_3\times\delta)\cdot\bfeta
        \leq \frac{m}{k}\sqrt{1+(kf_c)^2}|\delta||\bfeta|
    \end{align*}
    since $|\bfe_3\times\delta|=|\delta|$. This proves the claim where $m>0$. Where $m=0$,
    the equation reduces to $c_o|\bfz_1|\bfz_1=c_o|\bfz_2|\bfz_2=\bff$, therefore $\bfz_1=\bfz_2$ and $\delta = \bfeta$.
\end{proof}

\subsection{Discrete energy estimate}
\begin{lemma}\label{lem:DES}
    Let $\bfu^{n}\in V$ solve \eqref{eq:vi_td} at step $n$, with data
    $(m^n, P^n, \bff^{n}, \bfu^{n}_o)$ satisfying Assumption \ref{ass}, with initial value $\bfu^{n-1}$,
    $\sqrt{m^n}\bfu^{n-1}\in L^{2}$. Then we have
    \begin{align}
        \frac{1}{2} \Vert \sqrt{m^n}\bfu^{n}\Vert^{2}_{L^{2}} +
        k \left(\frac{c_o}{3}\Vert\bfu^n-\bfu^n_o\Vert^3_{L^3} + \D[P^n](\bfu^n)\right)
        \leq \frac{1}{2} \Vert \sqrt{m^n}\bfu^{n-1}\Vert^{2}_{L^{2}} + k g^n \ , \label{eq:DES}
    \end{align}
    where $g^n := \frac{4c_o}{3} \Vert \bfu^n_o\Vert^3_{L^3} + \frac{2}{3\sqrt{c_o}} \Vert \bff^n\Vert_{L^{3/2}}^{3/2}
    +\Vert\bff^n\Vert_{L^{3/2}}\Vert \bfu^n_o\Vert_{L^3}$. If the data $m^n \equiv m$, $P^n\equiv P$ are step-independent, summation over $n=1,\dots,N$ telescopes to
    \begin{align*}
        \frac{1}{2} \Vert \sqrt{m}\bfu^{N}\Vert^{2}_{L^{2}} +
        \sum_{n=1}^N k \left(\frac{c_o}{3}\Vert\bfu^n-\bfu^n_o\Vert^3_{L^3} + \D(\bfu^n)\right)
        \leq \frac{1}{2} \Vert \sqrt{m}\bfu^{0}\Vert^{2}_{L^{2}} + \sum_{n=1}^N k g^n \ .
    \end{align*}
\end{lemma}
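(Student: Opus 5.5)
Test the variational inequality \eqref{eq:vi_td} at step $n$ with $\bfv=0$. Since $\D[P^n](0)=0$ by Lemma \ref{lem:bdd}, this gives
\begin{align*}
    \langle \mathcal{A}(\bfu^n)-\bff^n,\bfu^n\rangle + \D[P^n](\bfu^n)\leq 0 \ .
\end{align*}
The argument then reduces to a pointwise lower bound for $\langle \mathcal{A}(\bfu^n)-\bff^n,\bfu^n\rangle$. It is the computation of Step 4 in the proof of Theorem \ref{th:td}, redone with the mass term kept exactly rather than discarded.

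For the mass term, use the identity $(a-b)\cdot a=\frac12|a|^2-\frac12|b|^2+\frac12|a-b|^2\geq \frac12|a|^2-\frac12|b|^2$ with $a=\bfu^n$ and $b=\bfu^{n-1}$. This gives
\begin{align*}
    \int_\Omega \frac{m^n}{k}(\bfu^n-\bfu^{n-1})\cdot\bfu^n\dd x\geq \frac{1}{2k}\Vert\sqrt{m^n}\bfu^n\Vert_{L^2}^2-\frac{1}{2k}\Vert\sqrt{m^n}\bfu^{n-1}\Vert_{L^2}^2 \ .
\end{align*}
The Coriolis term is pointwise orthogonal to $\bfu^n$, so it drops out.

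For the drag term, set $d=\bfu^n-\bfu^n_o$ and write $\bfu^n=d+\bfu^n_o$. Then $c_o|d|d\cdot\bfu^n\geq c_o|d|^3-c_o|d|^2|\bfu^n_o|$, and the forcing satisfies $\bff^n\cdot\bfu^n\leq|\bff^n||d|+|\bff^n||\bfu^n_o|$.

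Apply the two Young inequalities from Step 4, each absorbing $\frac{c_o}{3}|d|^3$:
\begin{align*}
    c_o|d|^2|\bfu_o^n|&\leq\frac{c_o}{3}|d|^3+\frac{4c_o}{3}|\bfu_o^n|^3 \ , \\
    |\bff^n||d|&\leq \frac{c_o}{3}|d|^3+\frac{2}{3\sqrt{c_o}}|\bff^n|^{3/2} \ .
\end{align*}
After these, $c_o-\frac{2c_o}{3}=\frac{c_o}{3}$ of the cubic term remains. The last forcing term is bounded by H\"older, $\int|\bff^n||\bfu^n_o|\leq \Vert\bff^n\Vert_{L^{3/2}}\Vert\bfu^n_o\Vert_{L^3}$. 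Collecting the remainders reproduces exactly $g^n$. Multiplying the resulting inequality by $k$ yields \eqref{eq:DES}.

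For the summed estimate with step-independent $m$ and $P$, the left-hand side of step $n$ contains $\frac12\Vert\sqrt m\bfu^n\Vert^2$, and the right-hand side of step $n+1$ contains the same quantity. The step-$n+1$ inequality requires $\sqrt m\,\bfu^n\in L^2$, which is supplied by the step-$n$ estimate itself. Summing over $n=1,\dots,N$ therefore telescopes.

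No step is a genuine obstacle; the only care needed is in tracking the Young constants so that they match the stated $g^n$ exactly.
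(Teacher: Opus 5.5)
Your proposal is correct and follows the paper's argument: test \eqref{eq:vi_td} with $\bfv=0$ and use $\D[P^n](0)=0$. Bound the mass term from below by $\frac12|a|^2-\frac12|b|^2$, note that the Coriolis term vanishes pointwise, and apply the same two Young inequalities as in Step 4 of Theorem~\ref{th:td} together with H\"older to obtain $g^n$; then multiply by $k$ and telescope.
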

\begin{proof}
    Testing \eqref{eq:vi_td} with $\bfv =0$ gives $\langle \mathcal{A}(\bfu^{n}),\bfu^n\rangle + \D[P^n](\bfu^n)-\langle \bff^n, \bfu^n\rangle \leq 0$. Similar to the coercivity proof of Theorem \ref{th:td}, we estimate with
    Young's inequality
    \begin{align*}
        &\langle \mathcal{A} (\bfu^{n}) - \bff^n , \bfu^n \rangle\\
        &\quad= \int_\Omega \left(\frac {m^n} k (\bfu^n-\bfu^{n-1})\cdot \bfu^n + c_o |d| d\cdot(d+\bfu^n_o)-\bff^n \cdot \bfu^n\right) \dd x\\
        &\quad\geq \int_\Omega \left(\frac{m^n}{2k} |\bfu^n|^{2} - \frac{m^n}{2k}|\bfu^{n-1}|^{2} + c_o |d|^3 - c_o |d|^2|\bfu^n_o|
        -|\bff^n|(|d|+|\bfu^n_o|)\right) \dd x\\
        &\quad\geq \int_\Omega \left(\frac{m^n}{2k} |\bfu^n|^{2} - \frac{m^n}{2k}|\bfu^{n-1}|^{2}\right) \dd x
        +\frac{c_o}{3} \Vert d \Vert^3_{L^3}\\
        &\qquad - \frac{4c_o}{3} \Vert \bfu^n_o\Vert^3_{L^3} - \frac{2}{3\sqrt{c_o}} \Vert \bff^n\Vert_{L^{3/2}}^{3/2}
        -\Vert\bff^n\Vert_{L^{3/2}}\Vert \bfu^n_o\Vert_{L^3} \ ,
    \end{align*}
    where $d := \bfu^n -\bfu^n_o$ and the same Young inequalities as in Theorem \ref{th:td}, Step 4, are used. Multiplication with $k$ gives \eqref{eq:DES}. For step-independent $m$ and $P$ the summation over $n$ telescopes the mass term.
\end{proof}

\section{Main result}
We define the space-time cylinder $Q_T = (0,T) \times \Omega$.
The operators $\div$ and $\varepsilon$ act on the spatial variable. We set
\begin{align}
    X:= \{\bfv\in W^{1,3}(\Omega)^2:\bfv_{|\Gamma_D}=0\} \ . \label{eq:def_X}
\end{align}
We assume that the time-dependent data satisfy
\begin{align}
    \bff \in L^{3/2}(0,T; L^{3/2}(\Omega)^2), \quad \bfu_o \in L^{3}(0,T; L^3(\Omega)^2), \quad
    \sqrt{m(\cdot,0)}\bfu_0 \in L^2(\Omega)^2   \ , \label{eq:data_time}
\end{align}
and that the ice mass and strength satisfy
\begin{align} \label{eq:time_m_P}
    m \in L^{\infty}(Q_T), \quad m\geq 0, \quad \partial_t m \in L^3(Q_T), \quad P \in C(\overline{Q_T}), \quad P \geq 0,
\end{align}
For some non-negative $\lambda\in L^1(0,T)$, we impose the growth condition
\begin{align}\label{eq:wachstumschranke}
    (\partial_t m)^+\leq\lambda(t)m\quad\text{a.e.\ in }Q_T,\qquad
    \beta(t):=\int_0^t\lambda(s)\dd s\ .
\end{align}
From \eqref{eq:time_m_P} it follows $m\in W^{1,3}(0,T;L^3(\Omega)) \hookrightarrow C([0,T];L^3(\Omega))$
by the Sobolev--Bochner embedding \cite[Section 7.1]{R13}.
Thus $m(\cdot,t)$ is well defined for every $t\in[0,T]$, in particular
at $t=0$ in \eqref{eq:data_time}. By \eqref{eq:wachstumschranke} and
Gr\"onwall's inequality, for a.e.\ $x\in\Omega$ we have
\begin{align}
    m(x,t) \leq \exp(\beta(t)-\beta(s))\, m(x,s), \quad 0\leq s \leq t \leq T \ , \label{eq:Gronwall}
\end{align}
in particular $\Vert m(\cdot,t)\Vert_{L^\infty(\Omega)} \leq \exp(\Vert\lambda\Vert_{L^1})\Vert m\Vert_{L^\infty(Q_T)}$ for every $t$.
Therefore the ice-free set is invariant in time: the bound allows melting everywhere, but
ice growth only where there is already ice.

We fix an admissible rate $\lambda$ and set
\begin{align*}
    \rho(t):=\exp(-\beta(t)),\qquad
    a(x,t):=\rho(t)m(x,t),\qquad
    \hat P(x,t):=\rho(t)P(x,t) \ .
\end{align*}
Then $\hat P\in C(\overline{Q_T})$ and
$a\in W^{1,1}(0,T;L^3(\Omega))\cap L^\infty(Q_T)$, with
\begin{align}\label{eq:wu-a-monotone}
    \partial_t a=\rho(\partial_t m-\lambda m)\leq0,\qquad
    a(\cdot,0)=m(\cdot,0)\ .
\end{align}
For fields with finite weighted norm, we use the seminorm
\begin{align*}
    \Vert\bfv\Vert_{a(t)}^2:=\int_\Omega a(x,t)|\bfv(x)|^2\dd x\ .
\end{align*}
For $\bfu\in L^3(0,T;V)$ and $\bfv\in C^1([0,T];X)$, define
\begin{align}\label{eq:wu-H}
    \Hh_{\bfu}(\bfv):={}&\int_0^T\rho(t)
        \langle m\partial_t\bfv,\bfv-\bfu\rangle\dd t
        +\frac{1}{2}\Vert\bfv(0)-\bfu_0\Vert_{a(0)}^2\ .
\end{align}
This functional collects the weighted time term and the initial term of
the weak formulation. Both terms are finite by the assumptions on the data.

We denote $\D_t:=\D[P(\cdot,t)]$ and $\Sigma_t:=\Sigma[P(\cdot,t)]$.
For the spatial operator, we set
\begin{align}
    \B(t;\bfv):=m(\cdot,t)f_c\bfe_3\times\bfv
       +c_o|\bfv-\bfu_o(t)|(\bfv-\bfu_o(t))
       \quad\text{for all }\bfv\in V\ .
\end{align}
Then $\B(t;\bfv)\in V^*$ for a.e.\ $t$. From Lemma \ref{lem:drag} we conclude that $\B(t;\cdot)$ is monotone and hemicontinuous with
\begin{align}
    \begin{split}
    \langle\B(t;\bfv_1)-\B(t;\bfv_2),\bfv_1-\bfv_2\rangle
    &\geq\frac{c_o}{2}\int_\Omega
       (|\bfv_1-\bfu_o(t)|+|\bfv_2-\bfu_o(t)|)|\bfv_1-\bfv_2|^2\dd x\\
    &\geq\frac{c_o}{2}\Vert\bfv_1-\bfv_2\Vert_{L^3}^3\ .
    \end{split}\label{eq:B_motone}
\end{align}
The operator $\bfv\mapsto\B(\cdot;\bfv)$ is continuous from $L^3(Q_T)^2$ to
$L^{3/2}(Q_T)^2$. The Coriolis term is bounded and linear,
and the drag satisfies
\begin{align*}
 \big||\boldsymbol{p}|\boldsymbol{p}-|\boldsymbol{r}|\boldsymbol{r}\big|
 \leq(|\boldsymbol{p}|+|\boldsymbol{r}|)|\boldsymbol{p}-\boldsymbol{r}|,
\end{align*}
which gives the required continuity by H\"older's inequality. This is the
standard continuity argument for Nemytskii operators \cite[Section 1.3]{R13}.

We consider the evolution problem
\begin{align}\label{eq:time_evolution}
    \bff(t) - m(\cdot,t)\partial_t\bfu(t)-\B(t;\bfu(t)) \in \partial\D_t(\bfu(t)) , \quad \bfu(0) = \bfu_0 \ .
\end{align}
Since $m$ is allowed to degenerate, time regularity of $\bfu$ cannot be expected.
The definition of a weak solution therefore shifts the time derivative
to the test function. An exponential weight accounts for the time dependence of $m$ through the growth condition \eqref{eq:wachstumschranke}.
The time dependence of $P$ is included by a space-time dissipation functional.
The weighted time term is given by \eqref{eq:wu-H}. We establish the required properties of the dissipation in the following two lemmas.

\begin{lemma}\label{lem:prop_time_D}
    Let $q_1,q_2 \in C(\bar \Omega)$ and $q\in C(\overline{Q_T})$ be non-negative.
    \begin{enumerate}
        \item[(a)] From $q_1\leq q_2$ it follows $\K(q_1(x)) \subseteq \K(q_2(x))$ for
        all $x$, hence $\Sigma[q_1]\subseteq\Sigma[q_2]$ and $\D[q_1]\leq \D[q_2]$
        on $V$. For non-negative constants $c$ we have $\D[cq_1] = c \D[q_1]$.
        \item[(b)] For $\bfv\in X$ we have
        \begin{align}
            \begin{split}
            \D[q_1](\bfv) - \D[q_2](\bfv) &= \int_\Omega (q_1-q_2)D(1,\varepsilon (\bfv)) \dd x,\\
            D(1,\varepsilon(\bfv)) &= \frac{1}{2}(\Delta(\varepsilon(\bfv))-\div\bfv),
            \end{split}
            \label{eq:time_D}
        \end{align}
        in particular $|\D[q_1](\bfv)-\D[q_2](\bfv)|\leq \Vert q_1-q_2\Vert_{C(\bar\Omega)}\int_\Omega D(1,\varepsilon(\bfv)) \dd x$ with $\int_\Omega
        D(1,\varepsilon(\bfv))\dd x \leq c_\K \Vert\varepsilon(\bfv)\Vert_{L^1}<\infty$.
        \item[(c)] For $\bfv \in C([0,T];X)$ the map $t\mapsto \D[q(\cdot,t)](\bfv(t))$ is continuous with
        \begin{align*}
            &|\D[q(\cdot, t)](\bfv(t))-\D[q(\cdot,s)](\bfv(s))|\\
            &\quad\leq\Vert q(\cdot,t)-q(\cdot,s)\Vert_{C(\bar\Omega)}\int_\Omega D(1,\varepsilon(\bfv(t))) \dd x \\
            &\qquad + c_\K \Vert q\Vert_{L^\infty(Q_T)}
            \Vert \varepsilon(\bfv(t)-\bfv(s))\Vert_{L^1} \ .
        \end{align*}
    \end{enumerate}
\end{lemma}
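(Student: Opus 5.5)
For (a) I would use Lemma \ref{lema:prop_K}: pointwise $q_1(x)\leq q_2(x)$ gives $\K(q_1(x))\subseteq\K(q_2(x))$. Hence every $\bftau\in\Sigma[q_1]$ satisfies the pointwise constraint for $q_2$. The divergence and weak Neumann conditions do not involve $q$, so $\Sigma[q_1]\subseteq\Sigma[q_2]$. Taking the supremum in Definition \ref{def:D} over the larger set gives $\D[q_1]\leq\D[q_2]$. For the scaling with $c>0$, $\K(cq_1(x))=c\K(q_1(x))$, and the map $\bftau\mapsto c\bftau$ is a bijection $\Sigma[q_1]\to\Sigma[cq_1]$, since divergence and Neumann trace are linear. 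The supremum therefore scales by $c$. The case $c=0$ follows because $\Sigma[0]=\{0\}$, so $\D[0]\equiv0$.

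For (b) I would apply the integral representation \eqref{eq:int_darstellung} of Lemma \ref{lem:bdd}, which holds for every $\bfv\in W^{1,3}$. For $\bfv\in X$ the trace vanishes on $\Gamma_D$, so $\bfv\odot\nu=0$ there and the boundary term is $D(q,0)=0$. Thus $\D[q_i](\bfv)=\int_\Omega D(q_i,\varepsilon(\bfv))\dd x$. By the explicit formula \eqref{eq:def_D}, $D(q,\bfxi)=qD(1,\bfxi)$ with $D(1,\bfxi)=\frac12(\Delta(\bfxi)-\xi_I)$, and $\tr\varepsilon(\bfv)=\div\bfv$. Subtracting the two integrals gives \eqref{eq:time_D}. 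The integrand is finite, so no $\infty-\infty$ issue arises. Since $D(1,\cdot)\geq0$, taking the supremum of $|q_1-q_2|$ gives the estimate. Lemma \ref{lem:prop_D} bounds $D(1,\bfxi)\leq c_\K|\bfxi|_F$, and $\varepsilon(\bfv)\in L^3\subset L^1$ makes the integral finite.

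For (c) I would split the difference as
\[
\big(\D[q(t)](\bfv(t))-\D[q(s)](\bfv(t))\big)+\big(\D[q(s)](\bfv(t))-\D[q(s)](\bfv(s))\big).
\]
Part (b) bounds the first bracket by the first term of the estimate. For the second bracket I would use the integral representation again: the function $D(q(x,s),\cdot)$ is Lipschitz with constant $c_\K q(x,s)\leq c_\K\Vert q\Vert_{L^\infty(Q_T)}$ by Lemma \ref{lem:prop_D}, and integrating this bound gives the second term. Continuity then follows from two facts. First, $q$ is uniformly continuous on the compact set $\overline{Q_T}$, so $\Vert q(t)-q(s)\Vert_{C(\bar\Omega)}\to0$. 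Second, $\bfv\in C([0,T];X)$ gives $\varepsilon(\bfv(s))\to\varepsilon(\bfv(t))$ in $L^3$ and hence in $L^1$. The factor $\int_\Omega D(1,\varepsilon(\bfv(t)))\dd x$ is finite for fixed $t$.

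The only step needing care is the vanishing of the Dirichlet boundary term in (b). It relies on the trace of $\bfv\in X$ being zero $\mathcal H^1$-a.e.\ on $\Gamma_D$, together with $D(q,0)=0$. Once that is settled, everything else is a direct consequence of the earlier lemmas.
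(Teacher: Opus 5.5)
Your proposal is correct and follows the paper's proof essentially step by step. For (a) you use monotonicity and scaling of $\K$, inherited by $\Sigma$ and the supremum; for (b) the representation \eqref{eq:int_darstellung} with vanishing Dirichlet term plus linearity of $D$ in the strength; for (c) the same triangle-inequality split, bounded by (b) and the Lipschitz estimate of Lemma \ref{lem:prop_D}, where you also use tacitly, as the paper notes explicitly, that \eqref{eq:int_darstellung} holds for any non-negative $q\in C(\bar\Omega)$ since that is exactly hypothesis (A3).
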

\begin{proof}
    \textit{(a)} The inclusion of $\K$ follows as in Lemma \ref{lema:prop_K}.
    The pointwise inclusion is inherited by $\Sigma$, and for $\D$ the supremum
    over the smaller set is smaller. For the scaling, $\Sigma[cq_1]=c\Sigma[q_1]$ and
    the tested functional is linear in $\bftau$.

    \textit{(b)} The identity \eqref{eq:int_darstellung} holds for any $q\in C(\bar\Omega)$, $q\geq 0$, since the proof only uses these properties. The linearity of
    the support function in the strength and the identity \eqref{eq:def_D} lead to \eqref{eq:time_D}.
    The estimate follows by $0\leq D(1,\bfxi) \leq c_\K |\bfxi|_F$ (Lemma \ref{lem:prop_D}) and $\varepsilon(\bfv)\in L^3\hookrightarrow L^1$.

    \textit{(c)} Use the triangle inequality and estimate the first term by (b).
    The second term follows by the representation and the Lipschitz estimate of Lemma \ref{lem:prop_D}. Both terms vanish for $s\rightarrow t$ and show the continuity.
\end{proof}

Similar to Definition \ref{def:D} we define the space-time dissipation functional
for a $q\in C(\overline{Q_T})$, $q\geq 0$, as follows:
\begin{align*}
    \begin{split}
    \Sigma_{Q_T}[q]:= \{ \bftau \in L^\infty(Q_T;\mathbb S) : {}&\bftau(\cdot,t)\in
    \Sigma[q(\cdot,t)] \text{ for a.e.\ } t,\\
    &\text{there exists } \bfw\in L^{3/2}(Q_T)^2:\\
    &\bfw(\cdot,t) = \div \bftau(\cdot,t) \text{ for a.e.\ } t \} \ .
    \end{split}
\end{align*}
We denote
\begin{align*}
    \D_{Q_T}[q](\bfw) &:= \sup_{\bftau\in\Sigma_{Q_T}[q]}\Big(-\int_{0}^{T}\int_{\Omega} \bfw\cdot \div\bftau \dd x \dd t\Big),\\
    \D_{Q_T}&:=\D_{Q_T}[P]
\end{align*}
for $\bfw\in L^3(Q_T)^2$.
We use the same definitions for non-negative coefficients $q$ that are
piecewise constant in time on a finite partition, with values in
$C(\bar\Omega)$. Their values at the partition nodes do not affect the
admissible stress set or the dissipation functional.

\begin{lemma}\label{lem:DQT}
    Let $q\geq0$ be either continuous on $\overline{Q_T}$ or piecewise constant
    in time as above.
    \begin{enumerate}
        \item[(a)] The functional $\D_{Q_T}[q] : L^3(Q_T)^2\rightarrow [0,\infty]$ is  convex, positively homogeneous of degree 1 and weakly lower semicontinuous
    with respect to weak convergence in $L^3(Q_T)^2$. For coefficients $q_1,q_2$
    in these classes, $q_1\leq q_2$ a.e.\ on $Q_T$ implies
    $\D_{Q_T}[q_1]\leq\D_{Q_T}[q_2]$. For constant $c\geq 0$ we have $\D_{Q_T}[cq] = c \D_{Q_T}[q]$.
    \item[(b)] If $\bar q$ and $\bar \bfw$ are piecewise constant in $t$ with respect to
    a partition $\{I_n\}_1^N$ of $(0,T)$, $\bar q_{|I_n} = q_n \in C(\bar \Omega)$
    with $q_n\geq 0$, $\bar\bfw_{|I_n} = \bfw_n \in V$, we have
    \begin{align*}
        \D_{Q_T}[\bar q](\bar \bfw) \leq \sum_{n=1}^N |I_n| \D[q_n](\bfw_n) \ .
    \end{align*}
    \item[(c)] If $q\in C(\overline{Q_T})$, then for $\bfv \in C([0,T]; X)$ we have
    \begin{align*}
        \D_{Q_T}[q](\bfv) = \int_{0}^{T} \D[q(\cdot,t)] (\bfv(t)) \dd t \ ,
    \end{align*}
        where the integrand is continuous (Lemma \ref{lem:prop_time_D}(c)).
    \end{enumerate}
\end{lemma}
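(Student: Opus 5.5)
For (a) I will argue exactly as in Lemma \ref{lem:bdd}(1). For each $\bftau\in\Sigma_{Q_T}[q]$, the functional $\bfw\mapsto-\int_0^T\!\int_\Omega\bfw\cdot\div\bftau\dd x\dd t$ is linear and weakly continuous on $L^3(Q_T)^2$, because $\div\bftau\in L^{3/2}(Q_T)^2$. A pointwise supremum of such functionals is convex, weakly lower semicontinuous and positively $1$-homogeneous. Since $0\in\Sigma_{Q_T}[q]$, it is also non-negative.

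Monotonicity in $q$ comes from the inclusion $\Sigma_{Q_T}[q_1]\subseteq\Sigma_{Q_T}[q_2]$. This inclusion follows slicewise from Lemma \ref{lem:prop_time_D}(a), applied for a.e.\ $t$ to the continuous slices $q_i(\cdot,t)$. For piecewise constant coefficients the slices are the values $q_n$, and the nodes are ignored. The divergence condition does not depend on $q$. Scaling follows from $\Sigma_{Q_T}[cq]=c\,\Sigma_{Q_T}[q]$ for $c>0$. The case $c=0$ gives $\Sigma_{Q_T}[0]=\{0\}$ because $\K(0)=\{0\}$.

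For (b), take $\bftau\in\Sigma_{Q_T}[\bar q]$ and split the time integral over the intervals $I_n$. For a.e.\ $t\in I_n$ we have $\bftau(\cdot,t)\in\Sigma[q_n]$, and $\bfw(\cdot,t)=\div\bftau(\cdot,t)\in L^{3/2}$. Definition \ref{def:D} then gives $-\int_\Omega\bfw_n\cdot\div\bftau(\cdot,t)\dd x\le\D[q_n](\bfw_n)$. The left-hand side is measurable in $t$ by Fubini, since $\bfw\in L^{3/2}(Q_T)$. Integrating over $I_n$, summing over $n$ and taking the supremum over $\bftau$ gives the claim.

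For (c), the inequality ``$\le$'' works the same way. For a.e.\ $t$, the slice $\bftau(\cdot,t)$ lies in $\Sigma_t[q]$, so the inner integral is bounded by $\D[q(\cdot,t)](\bfv(t))$, and we integrate in $t$. The integrand is continuous by Lemma \ref{lem:prop_time_D}(c).

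The reverse inequality ``$\ge$'' is the main obstacle: we need space-time stresses that are admissible and almost saturating. I will reuse the explicit construction from Step 3 of the proof of Lemma \ref{lem:bdd}, with the parameters $\mu,\rho,h,\eta$ fixed. I will first discretise in time. Since $\bfv\in C([0,T];X)$ is uniformly continuous into $W^{1,3}$, I take a fine partition with nodes $t_n$. On each $I_n$ I set $\bftau_n$ to be the Lipschitz stress built from $\bfv(t_n)$ and the strength $q_{n}^-:=\min_{t\in\bar I_n}q(\cdot,t)$. This function is continuous on $\bar\Omega$, and it is admissible for every $t\in I_n$ by monotonicity of $\K$. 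The resulting $\bftau$ is piecewise constant in $t$, bounded, and has an $L^\infty$ divergence, so $\bftau\in\Sigma_{Q_T}[q]$.

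To be safe about the boundary relaxation, I note that $\bfv(t)\in X$ has zero trace on $\Gamma_D$. Then $g=0$ and the representation \eqref{eq:int_darstellung} reduces to the volume integral, so it is enough to take only the interior part $\chi_h\bftau_\mu(P_\rho,\varepsilon(\bfv_k))$, with $P_\rho$ built from $q_n^-$. By Steps 4--7 of that proof, on each interval we get $-\int_\Omega\bfv(t_n)\cdot\div\bftau_n\ge\D[q_n^-](\bfv(t_n))-\epsilon$. It remains to replace $\bfv(t_n)$ by $\bfv(t)$ and $q_n^-$ by $q(\cdot,t)$. The first replacement costs $\|\bfv(t)-\bfv(t_n)\|_{L^3}\|\div\bftau_n\|_{L^{3/2}}$, and $\|\div\bftau_n\|_{L^{3/2}}$ depends on the mollification parameters. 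So I refine the partition after fixing those parameters, using a bound on the Lipschitz constants that is uniform over the finitely many $n$. Alternatively I can use the Green formula, which turns this term into $\int\bftau_n:\varepsilon(\bfv(t)-\bfv(t_n))$, bounded by $c_\K\|q\|_\infty\|\varepsilon(\bfv(t)-\bfv(t_n))\|_{L^1}$ independently of the parameters. I prefer this second route. The second replacement is controlled by Lemma \ref{lem:prop_time_D}(b),(c) through the uniform continuity of $q$. Finally, to make $\epsilon$ uniform over $n$, I choose the approximation parameters $k,\rho,h,\mu$ interval by interval, which is possible because there are only finitely many intervals. Letting the mesh go to zero then gives ``$\ge$''.
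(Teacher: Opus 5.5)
Your proposal is correct and follows the paper's proof essentially step by step. Parts (a), (b) and the ``$\le$'' half of (c) are argued identically. For ``$\ge$'' the paper likewise discretises in time and uses a lower envelope of the strength on each interval: it takes $(q(\cdot,t_j)-\omega_q(\kappa))^+$ where you take $\min_{t\in\bar I_n}q(\cdot,t)$. It then passes from $\bfv(t_j)$ to $\bfv(t)$ through the Green formula with the bound $c_\K\Vert q\Vert_{L^\infty(Q_T)}$, and from the envelope to $q(\cdot,t)$ through Lemma \ref{lem:prop_time_D}(b),(c).

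The one difference is how the near-saturating stress is obtained. The paper takes a near-maximiser $\bftau_j$ directly from the supremum defining $\D$, because a finite piecewise-constant-in-time concatenation of elements of $\Sigma$ automatically has divergence in $L^{3/2}(Q_T)^2$. So your detour through the explicit Lipschitz construction of Lemma \ref{lem:bdd}, with its interval-by-interval choice of parameters, is not needed. Its one advantage is that your stresses are compactly supported, so the Green formula is elementary; for a general $\bftau_j\in\Sigma$ it rests on Lemma \ref{lem:a3}.
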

\begin{proof}
    \textit{(a)} As in Lemma \ref{lem:bdd}, $\D_{Q_T}[q]$ is the pointwise supremum of the linear, weakly $L^{3}$-continuous
    functionals $\bfw \mapsto - \int\int \bfw \cdot\div\bftau \dd x \dd t$, hence convex and weakly lower semicontinuous.
    Since $0 \in \Sigma_{Q_T}[q]$, we have $\D_{Q_T}[q]\geq 0 = \D_{Q_T}[q](0)$, and the supremum scales.

    Lemma \ref{lem:prop_time_D}(a) shows that $\Sigma[q_1(\cdot,t)] \subseteq \Sigma[q_2(\cdot,t)]$ for a.e.\ $t$,
    so $\Sigma_{Q_T}[q_1] \subseteq \Sigma_{Q_T}[q_2]$. Scaling with constants follows from $\Sigma_{Q_T}[cq] =c\Sigma_{Q_T}[q]$.

    \textit{(b)} Let $\bftau \in \Sigma_{Q_T}[\bar q]$. For a.e.\ $t\in I_n$ we have
    $\bftau(\cdot,t) \in \Sigma[q_n]$. Definition \ref{def:D} gives
    \begin{align*}
        \langle -\div\bftau(\cdot, t), \bfw_n\rangle \leq \D [q_n](\bfw_n) \ .
    \end{align*}
    We integrate over $I_n$, sum up and take the supremum over $\bftau$.

    \textit{(c)}  $"\leq"$: For $\bftau \in \Sigma_{Q_T}[q]$ we have for a.e.\ $t$ pointwise $\langle-\div \bftau(\cdot,t),
    \bfv(t)\rangle \leq \D[q(\cdot,t)](\bfv(t))$. Integration and taking the supremum lead to the estimate.

    $"\geq"$: Let $\eta >0$, $\omega_q$ the modulus of continuity of $q$ on $\overline{Q_T}$, $\omega_{\varepsilon(\bfv)}$
    the modulus of continuity of $t\mapsto \varepsilon(\bfv(t))$ in $L^{1}$, and $C_v := \sup_t \int_\Omega D(1,\varepsilon(\bfv(t)))\dd x<\infty$.
    Choose an equidistant partition $\{I_j\}_1^{J}$ of $(0,T)$ with mesh size $\kappa$ and nodes $t_j \in \bar I_j$,
    and set $\delta := \omega_q(\kappa)$. From the definition of the supremum in \eqref{eq:def_diss} there
    exists a $\bftau_j \in \Sigma[(q(\cdot,t_j)-\delta)^{+}]$ with
    \begin{align*}
        \langle - \div \bftau_j, \bfv(t_j)\rangle \geq \D[(q(\cdot,t_j)-\delta)^+](\bfv(t_j)) - \eta  \ .
    \end{align*}
    For $t\in I_j$ we have $q(\cdot,t)\geq q(\cdot,t_j) - \delta$ and $q(\cdot, t) \geq 0$, so $(q(\cdot,t_j)-\delta)^+ \leq q(\cdot,t)$
    and therefore $\bftau_j\in \Sigma[q(\cdot,t)]$ (Lemma \ref{lem:prop_time_D} (a)). The composite function
    $\bftau(\cdot,t) := \bftau_j$ for $t\in I_j$ therefore lies in $\Sigma_{Q_T}[q]$. For $t\in I_j$
    we apply the Gauss theorem and $|\bftau_j|\leq c_\K \Vert q\Vert_{L^{\infty}(Q_T)}$ a.e.:
    \begin{align*}
        \langle - \div \bftau_j, \bfv(t) \rangle &= \int_\Omega \bftau_j : \varepsilon(\bfv(t)) \dd x\\
        &\geq \int_\Omega \bftau_j : \varepsilon(\bfv(t_j)) \dd x - c_\K \Vert q\Vert_{L^{\infty}(Q_T)} \omega_{\varepsilon(\bfv)}(\kappa)\\
        &\geq \D[(q(\cdot,t_j)-\delta)^{+}](\bfv(t_j)) - \eta -  c_\K \Vert q\Vert_{L^{\infty}(Q_T)} \omega_{\varepsilon(\bfv)}(\kappa)\\
        &\geq \D[q(\cdot,t_j)](\bfv(t_j)) - \delta C_v -\eta -  c_\K \Vert q\Vert_{L^{\infty}(Q_T)} \omega_{\varepsilon(\bfv)}(\kappa)\\
        &\geq \D[q(\cdot,t)](\bfv(t)) - 2\omega_q(\kappa) C_v -\eta - 2 c_\K \Vert q\Vert_{L^{\infty}(Q_T)} \omega_{\varepsilon(\bfv)}(\kappa)
    \end{align*}
    where we applied Lemma \ref{lem:prop_time_D}(b) in the third line and Lemma \ref{lem:prop_time_D}(c) in the fourth line. Integration over $(0,T)$ yields
    \begin{align*}
        \D_{Q_T}[q](\bfv) &\geq \int_{0}^{T} \D[q(\cdot,t)](\bfv(t)) \dd t\\
        &\quad - T ( 2\omega_q(\kappa) C_v+\eta + 2 c_\K \Vert q\Vert_{L^{\infty}(Q_T)} \omega_{\varepsilon(\bfv)}(\kappa)) \ .
    \end{align*}
    We conclude the proof by taking the limits $\kappa \rightarrow 0$ and $\eta \rightarrow 0$.
\end{proof}

\begin{definition}\label{def:solution_evolution}
    A function $\bfu \in L^3(0,T;V)$ is a weak solution of \eqref{eq:time_evolution} if
    $\sqrt{m}\bfu \in L^\infty(0,T;L^2)$, $\D_{Q_T}(\bfu)<\infty$ and
    for all $\bfv \in C^1([0,T];X)$ we have
\begin{align}
    \begin{split}\label{eq:weak_solution}
        \Hh_{\bfu}(\bfv)&+\int_0^T\rho(t)
        \left[\langle\B(t;\bfv)-\bff,\bfv-\bfu\rangle
        +\D_t(\bfv(t))\right]\dd t\\
        &-\D_{Q_T}[\hat P](\bfu)\geq0\ .
    \end{split}\end{align}

\end{definition}

\begin{remark}
For sufficiently regular functions, set $\bfw:=\bfv-\bfu$. Then
\begin{align*}
    \int_{0}^{T}\rho \langle m \partial_t \bfu,\bfw\rangle \dd t
    = \int_{0}^{T} \rho \langle m \partial_t \bfv , \bfw\rangle \dd t +
    \frac 1 2 \Vert \sqrt{m(\cdot,0)} \bfw(0)\Vert_{L^{2}}^2 \\
    -\frac{\rho(T)}{2} \Vert \sqrt{m(\cdot, T)} \bfw (T)\Vert^{2}_{L^2}
    + \frac 1 2 \int_0^T \rho\int_\Omega (\partial_t m-\lambda m)|\bfw|^2 \dd x \dd t
\end{align*}
where the last two terms are non-positive, the first trivially and the second by \eqref{eq:wachstumschranke}.
If we test \eqref{eq:time_evolution} with $\bfw(t)$, multiply by $\rho(t)>0$, integrate and
shift $\B$ by monotonicity to the test function, the two non-positive terms can be discarded.
Thus every sufficiently regular solution of \eqref{eq:time_evolution} is a weak solution, for every
admissible rate $\lambda$. The definition depends on the chosen admissible rate.
For time-independent $P$, Lemma \ref{lem:wu-unshift} below recovers a
variational inequality with $\B$ evaluated at the solution. For time-independent $m$
the canonical choice is $\lambda \equiv 0$, with $\beta \equiv 0$, $\hat P =P$, and the weak solution
becomes the classical unweighted form. To see that the terms are well defined, note that $t\mapsto \rho(t)\D_t(\bfv(t))$
is continuous (Lemma \ref{lem:prop_time_D}(c), $\beta$ is absolutely continuous), therefore we have a classical time integral.
On the solution side, $\D_{Q_T}[\hat P](\bfu)$ avoids requiring a separate
measurability and integral-representation result for
$\int_0^T\rho\D_t(\bfu)\dd t$.
For $\bfu\in C([0,T];X)$ they coincide (Lemma \ref{lem:DQT}(c)), and by Lemma \ref{lem:DQT}(a) the
finiteness of $\D_{Q_T}[\hat P](\bfu)$ is equivalent to that of $\D_{Q_T}(\bfu)$, since $\exp(-\Vert\lambda\Vert_{L^1})P\leq \hat P\leq P$.
\end{remark}

\subsection{Main result}
We use the following discretisation: $t_n := nk$, $k=T/N$, $I_n := (t_{n-1},t_n]$, averaged data
$\bff^n := \frac{1}{k} \int_{I_n} \bff \dd t$, $\bfu^n_o := \frac 1 k \int_{I_n} \bfu_o \dd t$.
The mass, strength and weights are discretised as
\begin{align*}
    m^n &:= m(\cdot, t_n), \quad P^n:=\exp(\beta(t_n)) \max_{t\in \bar I_n} \hat P(\cdot, t),\\
    \rho_n &:= \rho(t_n)\in [\exp(-\Vert\lambda\Vert_{L^1}),1] \ .
\end{align*}
$\bfu^n$ is the solution of the time-discrete problem \eqref{eq:sub_vi_td} (Theorem \ref{th:td}) with
the data $(m^n, P^n,\bff^n,\bfu^n_o)$ and the initial value $\bfu^{n-1}$, $\bfu^0 := \bfu_0$.
$\bar\bfu_k$ is the piecewise constant interpolation ($\bar\bfu_{k|I_n} = \bfu^n$) and $\hat\bfu_k$
the affine interpolation. Analogously we have $\bar\bff_k, \bar\bfu_{o,k}, \bar m_k, \bar \rho_k$.
The choices are well defined and admissible:
\begin{enumerate}
    \item[(A)] $m^n$ is well defined since $m \in C([0,T];L^3)$, and \eqref{eq:Gronwall} leads to
    $m^{n} \in L^\infty(\Omega)$, $m^n\geq 0$ and pointwise a.e.
    \begin{align}
        \rho_n m^{n} \leq \rho_{n-1}m^{n-1} \quad\text{for }n\geq2, \quad \rho_1m^{1}\leq  m(\cdot,0),
        \label{eq:ass_m}
    \end{align}
    in particular $\sqrt{m^{1}}\bfu_0 \in L^{2}(\Omega)$. Therefore the data satisfy
    Assumption \ref{ass} and $\bfu^{n}$ exists uniquely (Theorem \ref{th:td}).
    \item[(B)] $P^{n} \in C(\bar\Omega)$, $P^{n}\geq 0$, and for $t\in I_n$ we have pointwise
    \begin{align}   \label{eq:B}
        \begin{split}
        &\rho_n P^{n} = \max_{\bar I_n}\hat P \geq \hat P(\cdot,t), \quad P^{n}\geq P(\cdot,t), \\
        &\Vert P^{n}-P(\cdot, t)\Vert_{C(\bar\Omega)} \leq \exp(\Vert\lambda\Vert_{L^{1}}) \omega_{\hat P}(k)
        +(\exp(\mu_k) -1) \Vert P\Vert_{L^{\infty}(Q_T)}
        \end{split}
    \end{align}
    with the modulus of continuity $\omega_{\hat P}$ of $\hat P$ on $\overline{Q_T}$ and $\mu_k :=
    \max_n \int_{I_n} \lambda \dd s \rightarrow 0$ for $k\rightarrow 0$. The right side is independent
    of $n$ and vanishes for $k\rightarrow 0$. Also $\Vert\bar\rho_k-\rho\Vert_{L^{\infty}(0,T)} \leq \mu_k \rightarrow 0$.
\end{enumerate}
\begin{theorem}\label{th:rothe}
    Under Assumption \ref{ass} (A1), $f_c\in L^{\infty}(\Omega)$, $c_o >0$, \eqref{eq:data_time},
    \eqref{eq:time_m_P} and \eqref{eq:wachstumschranke} the following statements hold:
    \begin{enumerate}
        \item[(a)] A subsequence of $(\bar\bfu_k)_k$ converges weakly in $L^{3}(Q_T)$ to a
        weak solution $\bfu$ in the sense of Definition \ref{def:solution_evolution}, and every such limit satisfies
        the energy estimate
        \begin{align*}
            \esssup_{t\in (0,T)} \frac{1}{2}\Vert \sqrt{m(\cdot,t)}\bfu(t)\Vert^{2}_{L^{2}}
            +\int_{0}^{T} \frac{c_o}{3} \Vert \bfu-\bfu_o\Vert^{3}_{L^{3}} \dd t
            +\D_{Q_T}(\bfu) \\
            \leq 2\exp(\Vert\lambda\Vert_{L^{1}}) \left(\frac{1}{2}\Vert\sqrt{m(\cdot,0)}\bfu_0\Vert^{2}_{L^{2}}
            +\int_{0}^{T}g(t)\dd t\right)
        \end{align*}
        with $g(t):= \frac{4c_o}{3} \Vert \bfu_o(t)\Vert_{L^{3}}^{3} + \frac{2}{3\sqrt{c_o}}\Vert \bff(t)\Vert_{L^{3/2}}^{3/2} + \Vert\bff(t)\Vert_{L^{3/2}}\Vert\bfu_o(t)\Vert_{L^{3}}$.
        \item[(b)] If $P(x,t)=P(x)$ is independent of time, there exists
        exactly one weak solution in the sense of
        Definition \ref{def:solution_evolution}, for the fixed admissible
        rate $\lambda$. Uniqueness concerns the velocity a.e.\ on $Q_T$,
        including the sets where $m$ or $P$ vanishes.
    \end{enumerate}

\end{theorem}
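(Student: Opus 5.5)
For (a) I would follow the standard Rothe scheme: derive uniform bounds from the discrete energy estimate, extract weakly convergent subsequences, and pass to the limit in a discrete weak inequality that has already been written in the form of Definition \ref{def:solution_evolution}.

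\textbf{Uniform bounds.} I apply Lemma \ref{lem:DES} at step $n$ with $m^n,P^n$ and multiply by $\rho_n$. By \eqref{eq:ass_m}, $\rho_n\Vert\sqrt{m^n}\bfu^{n-1}\Vert^2\leq\rho_{n-1}\Vert\sqrt{m^{n-1}}\bfu^{n-1}\Vert^2$, so the weighted mass terms telescope. Since $\rho_n\geq\exp(-\Vert\lambda\Vert_{L^1})$, this bounds $\max_n\Vert\sqrt{m^n}\bfu^n\Vert^2$, $\sum_n k\Vert\bfu^n-\bfu_o^n\Vert_{L^3}^3$ and $\sum_n k\rho_n\D[P^n](\bfu^n)$ by the right-hand side of the claimed estimate, up to the factor $\exp(\Vert\lambda\Vert_{L^1})$. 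Jensen's inequality for the averages $\bff^n,\bfu_o^n$ gives $\sum k g^n\leq\int_0^T g\dd t$ after Hölder on the mixed term. Hence $\bar\bfu_k$ is bounded in $L^3(Q_T)$, and a subsequence converges weakly to some $\bfu$. By \eqref{eq:B}, $\rho_nP^n\geq\hat P$ on $I_n$, so Lemma \ref{lem:DQT}(a),(b) give $\D_{Q_T}[\hat P](\bar\bfu_k)\leq\sum_n k\rho_n\D[P^n](\bfu^n)$. Weak lower semicontinuity then transfers the dissipation bound to the limit, and $\D_{Q_T}(\bfu)\leq e^{\Vert\lambda\Vert_{L^1}}\D_{Q_T}[\hat P](\bfu)$. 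For the $L^\infty$ bound on $\sqrt m\bfu$ I use weak lower semicontinuity of $\int_{t_1}^{t_2}\Vert\sqrt{m}\bfu\Vert^2$ on time intervals, together with $\bar m_k\to m$ in $C([0,T];L^3)$, after passing to weak convergence of $\sqrt{\bar m_k}\bar\bfu_k$ in $L^2$. This gives the stated energy estimate, with the factor $2$ absorbing the combination of terms.

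\textbf{Discrete inequality.} For $\bfv\in C^1([0,T];X)$ I test \eqref{eq:vi_td} at step $n$ with $\bfv(t)$ for $t\in I_n$. I shift $\B$ to the test function by monotonicity \eqref{eq:B_motone}, multiply by $\rho_n$ and integrate over $I_n$. The mass term $\rho_n\frac{m^n}{k}(\bfu^n-\bfu^{n-1})\cdot(\bfv-\bfu^n)$ I rewrite by discrete summation by parts: the quadratic part satisfies $\frac{1}{2}\big(|\bfw^n|^2-|\bfw^{n-1}|^2\big)\leq(\bfw^n-\bfw^{n-1})\cdot\bfw^n$ with $\bfw=\bfu-\bfv$. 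Using \eqref{eq:ass_m} once more, the weighted mass telescoping discards the nonpositive terms and leaves a discrete analogue of $\Hh_{\bar\bfu_k}(\bfv)$ plus errors. These errors are controlled by $\Vert\bfv\Vert_{C^1}$, by $\Vert\bar m_k-m\Vert$ and by $\Vert\bar\rho_k-\rho\Vert_\infty\leq\mu_k$. The dissipation on the test side is $\sum_n\int_{I_n}\rho_n\D[P^n](\bfv(t))\dd t$. By Lemma \ref{lem:prop_time_D}(b) and \eqref{eq:B}, it converges to $\int_0^T\rho\D_t(\bfv)\dd t$. On the solution side, $-\sum_n k\rho_n\D[P^n](\bfu^n)\leq-\D_{Q_T}[\hat P](\bar\bfu_k)$ as above.

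\textbf{Passage to the limit.} Every term is linear in $\bar\bfu_k$ or a continuous function of $\bfv$ and the data. The exceptions are $-\D_{Q_T}[\hat P](\bar\bfu_k)$, which is handled by $\limsup(-\D)\leq-\D(\bfu)$ via lower semicontinuity, and $\rho\langle\B(t;\bfv),-\bar\bfu_k\rangle$, which is linear in $\bar\bfu_k$. The data terms converge because $\bar\bff_k\to\bff$ in $L^{3/2}$ and $\bar\bfu_{o,k}\to\bfu_o$ in $L^3$, and $\B(\cdot;\bfv)$ is continuous in these. I expect the main technical obstacle to be the mass term, because $m$ may vanish and no time regularity of $\bfu$ is available. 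The summation by parts must be done entirely on the test function: the term $\rho\langle m\partial_t\bfv,\bfv-\bfu\rangle$ has to emerge with a correct-sign remainder involving $(\rho_n m^n-\rho_{n-1}m^{n-1})|\bfv-\bfu^{n-1}|^2\leq0$. The remaining error terms are of the form $m^n-m^{n-1}$ times bounded quantities, and I control them via $\partial_tm\in L^3$.

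\textbf{Uniqueness (b).} For time-independent $P$, existence comes from (a), and I would first invoke the unshifting result referred to in the remark (Lemma \ref{lem:wu-unshift}) so that $\B$ is evaluated at $\bfu$. Given two solutions $\bfu_1,\bfu_2$, I approximate each by regular fields $\bfv_j\in C^1([0,T];X)$. For this I use a one-sided time mollification $\bfv_j$ of spatially mollified $\bfu_i$, solving $\frac{1}{j}\partial_t\bfv+\bfv=\bfu$ in a weighted sense, so that $\langle m\partial_t\bfv_j,\bfv_j-\bfu\rangle\leq0$ and $\bfv_j\to\bfu$ with $\int\rho\D(\bfv_j)\to\D_{Q_T}[\hat P](\bfu)$. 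This convergence of the dissipation is the delicate point, and I would try to obtain it from Lemma \ref{lem:bdd} and Jensen's inequality for the convex $\D$. Inserting the approximation of $\bfu_2$ into the inequality for $\bfu_1$ and vice versa and adding, the dissipations cancel in the limit. The drag monotonicity then gives $\frac{c_o}{2}\int\rho\Vert\bfu_1-\bfu_2\Vert_{L^3}^3\leq0$, which yields uniqueness a.e.\ on $Q_T$, including where $m$ or $P$ vanish.
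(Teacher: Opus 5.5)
Your part (a) is essentially the paper's Rothe argument: weighted telescoping via \eqref{eq:ass_m}, summation by parts on the test function, and $\rho_nP^n\geq\hat P$ combined with Lemma \ref{lem:DQT} and lower semicontinuity. Two details need adjusting. First, $\sum_nkg^n\leq\int_0^Tg\dd t$ holds for the mixed term only as a $\limsup$. Second, no $m^n-m^{n-1}$ error terms arise if you keep $\rho_nm^n(\bfv^n-\bfv^{n-1})/k$ together, since it converges strongly in $L^{3/2}(Q_T)$.

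\textbf{The comparison step in (b) fails.} Put $\delta=\bfu_1-\bfu_2$. Suppose you insert an approximation $\bfz^{(2)}_j$ of $\bfu_2$ into the inequality for $\bfu_1$, and an approximation $\bfz^{(1)}_j$ of $\bfu_1$ into the one for $\bfu_2$. The dissipations cancel and the drag terms combine as you say. The time terms, however, leave the cross contribution $\int_{Q_T}a\,\partial_t(\bfz^{(2)}_j-\bfz^{(1)}_j)\cdot(\bfu_2-\bfu_1)\dd x\dd t$. Here $\partial_t\bfz^{(i)}_j$ is of the order of the inverse filter width, while $\bfu_2-\bfu_1$ is not small. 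Already for smooth solutions this tends to $\int_{Q_T}a\,\partial_t\delta\cdot\delta\dd x\dd t=\frac12\Vert\delta(T)\Vert_{a(T)}^2+\frac12\int_{Q_T}(-\partial_ta)|\delta|^2\dd x\dd t\geq0$. That lies on the wrong side of the inequality. Putting the time derivative on the test function discards exactly this energy of the difference, so cross-testing cannot give $\int_0^T\rho\langle\B(t;\bfu_1)-\B(t;\bfu_2),\delta\rangle\dd t\leq0$.

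The missing idea is to recover the midpoint $\bfw=(\bfu_1+\bfu_2)/2$ by a single sequence $\bfz_n$ and test both unshifted inequalities with the same $\bfz_n$. Then the time terms add up exactly to $2\Hh_{\bfw}(\bfz_n)$, whose $\limsup$ is non-positive. Convexity gives $2\D_{Q_T}[\hat P](\bfw)\leq\D_{Q_T}[\hat P](\bfu_1)+\D_{Q_T}[\hat P](\bfu_2)$. The drag terms become $-\frac12\int_0^T\rho\langle\B(t;\bfu_1)-\B(t;\bfu_2),\delta\rangle\dd t$.

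\textbf{The recovery sequence is not secured by the tools you name.} Jensen's inequality does control the one-sided time filter, because for time-independent $P$ the same $\D$ acts at every time and $\rho$ is non-increasing. Spatial mollification is the problem. It does not commute with the $x$-dependent strength $P$, the Dirichlet constraint in $X$, or the relaxed boundary term. Lemma \ref{lem:bdd} represents $\D$ only on $W^{1,3}$ fields, whereas a finite-dissipation $L^3$ field need not even lie in $BD(\Omega)$, since isotropic expansion and $\{P=0\}$ are uncontrolled.

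The paper obtains $C^1([0,T];X)$ approximations with convergent dissipation abstractly, in Lemma \ref{lem:wu-relaxation}. A dominated Hahn--Banach extension identifies the polar set of the smooth-restricted functional with $\{-\div\bftau:\bftau\in\Sigma_{Q_T}[\hat P]\}$, and Fenchel--Moreau then identifies the relaxation with $\D_{Q_T}[\hat P]$.

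Only after that is the exponential filter applied, with a fixed order of choices:
\begin{enumerate}
\item first $\boldsymbol{b}_n\to\bfu_0$ in $\Vert\cdot\Vert_{a(0)}$;
\item then $h_n$;
\item then $\bfw_n$ with $\Vert\bfw_n-\bfw\Vert_{L^3}\leq h_n^2$.
\end{enumerate}
Without this coupling, two terms are uncontrolled: the defect $\int_{Q_T}a\,\partial_t\bfz_n\cdot(\bfw_n-\bfw)\dd x\dd t$, of size $h_n^{-1}\Vert\bfw_n-\bfw\Vert_{L^3}$, and the initial term of $\Hh$. So $\langle m\partial_t\bfv_j,\bfv_j-\bfu\rangle\leq0$ holds only as a $\limsup$, not exactly.

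Finally, Lemma \ref{lem:wu-unshift}, which you invoke, is proved in the paper via this same recovery sequence and a Minty argument. You cannot take it for granted.
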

\subsection{Proof of (a)}
\begin{proof}
    We use the time-discretisation strategy of Rothe's method
    \cite[Section 8.2]{R13}, with weighted estimates for the present coefficients.

    Step 1:

    Jensen's inequality leads to $\sum_n k \Vert \bff^{n}\Vert_{L^{3/2}}^{3/2} \leq \Vert\bff\Vert_{L^{3/2}}^{3/2}$
    and analogously for $\bfu_o$. Applying H\"older to the mixed term leads to $\sum_n k g^{n}\leq C(\bff, \bfu_o)$.
    The discrete energy estimate \eqref{eq:DES} for the data $(m^{n},P^{n},\bff^{n},\bfu^{n}_o)$ is multiplied
    by $\rho_n \leq 1$ and summed up. Due to \eqref{eq:ass_m} we have $\rho_n \Vert \sqrt{m^{n}}\bfu^{n-1}\Vert^{2}_{L^{2}}
    \leq \rho_{n-1}\Vert\sqrt{m^{n-1}}\bfu^{n-1}\Vert^{2}_{L^{2}}$ for $n\geq 2$ and $\rho_1 \Vert \sqrt{m^{1}}\bfu^{0}
    \Vert^{2}\leq \Vert \sqrt{m(\cdot,0)}\bfu_0\Vert^{2}$. The weighted sum telescopes, and for any
    $N'\leq N$ it follows
    \begin{align}
        \begin{split}\label{eq:weighted_discrete_energie}
        \frac{\rho_{N'}}{2}\Vert\sqrt{m^{N'}}\bfu^{N'}\Vert^{2}_{L^{2}} &+ \sum_{n=1}^{N'}k \rho_n
        \left(\frac{c_o}{3}\Vert\bfu^{n}-\bfu^{n}_o\Vert^{3}_{L^{3}}+\D[P^{n}](\bfu^{n})\right)\\
        &\leq \frac{1}{2} \Vert \sqrt{m(\cdot,0)}\bfu_0\Vert^{2}_{L^{2}} + \sum_{n=1}^{N}k g^{n} \leq C \ .
                \end{split}
    \end{align}
    With $\rho_n \geq \exp(-\Vert\lambda\Vert_{L^{1}})$ and $\sum_n k\Vert\bfu^{n}_o\Vert_{L^{3}}^{3}
    \leq \Vert\bfu_o\Vert^{3}_{L^{3}}$, in particular $(\bar\bfu_k)$ is bounded in $L^{3}(Q_T)$,
    $(\sqrt{\bar m_k}\bar\bfu_k)$ bounded in $L^{\infty}(0,T;L^{2})$ and $\sum_n k \D[P^{n}](\bfu^{n})\leq C$.

    Step 2:

    We take a subsequence, with the same notation, $\bar\bfu_k \rightharpoonup \bfu$ in $L^{3}(Q_T)$. The averaged data converge strongly, $\bar \bff_k \rightarrow \bff$ in $L^{3/2}(Q_T)$, $\bar\bfu_{o,k}\rightarrow\bfu_o$ in
    $L^{3}(Q_T)$. From the uniform continuity of $m: [0,T] \rightarrow L^{3}(\Omega)$ it follows
    \begin{align*}
        \Vert\bar m_k-m\Vert_{L^{\infty}(0,T;L^{3})} \leq
    \sup_{|t-s|\leq k} \Vert m(t)-m(s)\Vert_{L^{3}} \rightarrow 0
    \end{align*}
    and with $|\sqrt{a}-\sqrt{b}|\leq \sqrt{|a-b|}$ therefore
    \begin{align*}
        \Vert \sqrt{\bar m_k}-\sqrt{m}\Vert_{L^{\infty}(0,T;L^{6})} \leq \Vert\bar m_k-m\Vert^{1/2}_{L^{\infty}(0,T;L^{3})}\rightarrow 0 \ .
    \end{align*}
    Assumption (B) shows that $\bar\rho_k \rightarrow \rho$ uniformly.
    From the boundedness of Step 1 and identification of the limit via test functions it follows that
     $\sqrt{\bar m_k}\bar \bfu_k \wstar \sqrt{m}\bfu$ in $L^{\infty}(0,T;L^{2})$,
     in particular $\sqrt{m}\bfu \in L^{\infty}(0,T;L^{2})$.

     Step 3: We use monotonicity to evaluate the nonlinear operator at the
     test function, as in the Minty formulation \cite[Section 2.3]{R13}.

    Let $\bfv \in C^1([0,T];X)$, $\bfv^{n}:= \bfv(t_n)$. Testing the discrete VI \eqref{eq:vi_td} in time step $n$ with $\bfv^{n}$,
    the monotonicity
    \begin{align*}
        &\langle \B^{n}(\bfu^{n})-\B^n(\bfv^n), \bfv^n-\bfu^n\rangle \leq 0 \\
        &\B^n(\bfw) := m^nf_c\bfe_3 \times \bfw + c_o |\bfw-\bfu^n_o|(\bfw-\bfu_o^n)
    \end{align*}
    and the fact that the Coriolis term can be shifted due to pointwise skew symmetry, independently of $m^n$, we have
    \begin{align}\label{eq:minty}
        \begin{split}
        \left\langle \frac{m^n}{k}(\bfu^n-\bfu^{n-1}),\bfv^n-\bfu^n \right\rangle
        + \langle\B^n(\bfv^n)-\bff^n,\bfv^n-\bfu^n\rangle \\+ \D[P^n](\bfv^n)-\D[P^n](\bfu^n)\geq 0 \ .
        \end{split}
    \end{align}

    Step 4:

    Set $\bfw^n := \bfv^n -\bfu^n$ and calculate
    \begin{align*}
        \langle m^n(\bfu^n-\bfu^{n-1}),\bfw^n\rangle = \langle m^n (\bfv^n-\bfv^{n-1}),\bfw^n\rangle - \langle m^n(\bfw^n-\bfw^{n-1}),\bfw^n\rangle \ .
    \end{align*}
    The estimate $\langle m^n (a-b),a\rangle \geq \frac 1 2 (\Vert\sqrt{m^n}a\Vert^2-\Vert\sqrt{m^n}b\Vert^2)$, the rearrangement
    \begin{align*}
        \sum_{n=1}^{N} \rho_n (a_n-b_n) = \rho_Na_N-\rho_1b_1 + \sum_{n=2}^{N}(\rho_{n-1}a_{n-1}-\rho_nb_n)
    \end{align*}
    for $a_n := \Vert \sqrt{m^n}\bfw^n\Vert^2_{L^2}$ and $b_n:=\Vert\sqrt{m^n}\bfw^{n-1}\Vert^2_{L^2}$, \eqref{eq:ass_m} and
    \begin{align*}
        \rho_{n-1}a_{n-1}-\rho_nb_n &= \int_\Omega (\rho_{n-1}m^{n-1}-\rho_nm^n)
        |\bfw^{n-1}|^2\dd x \geq 0,\\
        \rho_1b_1 &\leq \Vert\sqrt{m(\cdot,0)}\bfw^0\Vert^2_{L^2}\ ,
    \end{align*}
    lead, for any $k$, to
    \begin{align*}
        &\sum^N_{n=1} \rho_n \langle m^n (\bfu^n-\bfu^{n-1}),\bfw^n\rangle\\
        &\quad\leq \sum_{n=1}^{N}\rho_n\langle m^n(\bfv^n-\bfv^{n-1}),\bfw^n\rangle
        +\frac 1 2 \Vert\sqrt{m(\cdot,0)}(\bfv(0)-\bfu_0)\Vert^2_{L^2} \ .
    \end{align*}
    The two terms $\rho_N a_N \geq 0$ and the sum possess the right sign thanks
    to the growth rate condition \eqref{eq:wachstumschranke}. This summation-by-parts step uses the same mass monotonicity as the energy estimate. For the first term
    on the right-hand side we have the difference quotient $\dd_k \bfv^n =
    (\bfv^n-\bfv^{n-1})/k = \frac{1}{k}\int_{I_n} \partial_t \bfv \dd s$.
    The step function $t\mapsto \rho_n m^n \dd_k \bfv^n$ for $t\in I_n$ converges
    strongly in $L^{3/2}(Q_T)$ to $\rho m\partial_t\bfv$, together with $\bar \bfv_k -\bar \bfu_k \rightharpoonup \bfv-\bfu$ in $L^3(Q_T)$. So
    \begin{align}
        \sum_{n=1}^{N}k\rho_n\langle m^n \dd_k\bfv^n, \bfv^n-\bfu^n
        \rangle \rightarrow \int_{0}^{T}\rho \langle m\partial_t \bfv,
        \bfv-\bfu\rangle \dd t \ . \label{eq:limit_time}
    \end{align}

    Step 5:
    We multiply \eqref{eq:minty} by $k\rho_n$ and sum up, then $k\rightarrow 0$
    along a subsequence:
    \begin{itemize}
        \item The step function $t\mapsto \rho_n (\B^n(\bfv^n)-\bff^n)$ converges
        strongly in $L^{3/2}(Q_T)$. For the Coriolis term, this follows by H\"older's inequality from $\bar m_k\rightarrow m$ in $L^\infty(0,T;L^3)$ and $\sup_{t\in[0,T]}\Vert\bar\bfv_k(t)-\bfv(t)\Vert_X\rightarrow0$. For the ocean drag, we use continuity of $\bfw\mapsto|\bfw|\bfw$ from $L^3(Q_T)$ to $L^{3/2}(Q_T)$, convergence of the averaged data as in Step 2 and uniform convergence of the weights. Together with $\bar \bfv_k-\bar \bfu_k \rightharpoonup \bfv-\bfu$ we conclude
        \begin{align*}
            \sum_n k\rho_n\langle\B^n(\bfv^n)-\bff^n,\bfv^n-\bfu^n\rangle
            \rightarrow \int_{0}^{T}\rho \langle \B(t;\bfv)-\bff,\bfv-\bfu\rangle \dd t \ .
        \end{align*}
        \item For $t\in I_n$ we have
        \begin{align*}
            |\rho_n\D[P^n](\bfv^n)-\rho(t)\D_t(\bfv(t))|
            \leq \mu_k\D[P^n](\bfv^n) + |\D[P^n](\bfv^n)-\D_t(\bfv(t))|
        \end{align*}
        and both terms vanish uniformly in $n$. The first one since
        \begin{align*}
            \D[P^n](\bfv^n) = \int_\Omega P^nD(1,\varepsilon(\bfv^n)) \dd x
            \leq c_\K \sup_n\Vert P^n\Vert_{L^\infty} \sup_t\Vert\varepsilon(\bfv(t))\Vert_{L^1} \leq C
        \end{align*}
        and the second one with Lemma \ref{lem:prop_time_D}(b),(c) and Assumption (B). Therefore
        \begin{align*}
            \sum_n k \rho_n \D[P^n](\bfv^n) \rightarrow \int_0^T\rho(t)\D_t(\bfv(t))\dd t \ .
        \end{align*}
        \item For the scaling we have $\rho_n \D[P^n]=\D[\rho_nP^n]$ (Lemma \ref{lem:prop_time_D}(a)), and $\bar q_{k|I_n}:= \rho_n P^n = \max_{\bar I_n} \hat P$ satisfies $\bar q_k\geq \hat P$ in $Q_T$ (Assumption (B)).
        So with Lemma \ref{lem:DQT}(a),(b) and monotonicity we conclude
        \begin{align*}
            \sum_n k \D[\rho_nP^n](\bfu^n) &\geq \D_{Q_T}[\bar q_k](\bar \bfu_k)
            \geq \D_{Q_T}[\hat P](\bar\bfu_k),\\
            \liminf_k \D_{Q_T}[\hat P](\bar\bfu_k)&\geq \D_{Q_T}[\hat P](\bfu) \ ,
        \end{align*}
        and with this we have
        \begin{align*}
            \limsup_k \Big(-\sum_n k\rho_n\D[P^n](\bfu^n)\Big) \leq -\D_{Q_T}[\hat P](\bfu) \ .
        \end{align*}
        \item The time term is the limit \eqref{eq:limit_time} from Step 4.
    \end{itemize}
    Thus $\bfu$ satisfies the inequality \eqref{eq:weak_solution} and
    \begin{align*}
        \D_{Q_T}(\bfu) \leq \exp(\Vert\lambda\Vert_{L^1})\D_{Q_T}[\hat P](\bfu)
        \leq \exp(\Vert\lambda\Vert_{L^1})\liminf_k\sum_n k \rho_n\D[P^n](\bfu^n)
        <\infty
    \end{align*}
    by Lemma \ref{lem:DQT}(a) and \eqref{eq:weighted_discrete_energie}.
    Thus $\bfu$ is a solution in the sense of Definition \ref{def:solution_evolution}.

    For the energy estimate set
    \begin{align*}
        C_k := \exp(\Vert\lambda\Vert_{L^1})
        \left(\frac{1}{2}\Vert\sqrt{m(\cdot,0)}\bfu_0\Vert^2_{L^2}+\sum_{n=1}^{N}kg^n\right) \ .
    \end{align*}
    Multiplying \eqref{eq:weighted_discrete_energie} by $\exp(\Vert\lambda\Vert_{L^1})$
    and using $\exp(\Vert\lambda\Vert_{L^1})\rho_n\geq1$, we obtain for every $N'\leq N$
    \begin{align*}
        \frac{1}{2} \Vert\sqrt{m^{N'}}\bfu^{N'}\Vert^2_{L^2}
        +\sum_{n=1}^{N'}k\left(\frac{c_o}{3}\Vert\bfu^n-\bfu^n_o\Vert^3_{L^3} +\D[P^n](\bfu^n)\right) \leq C_k \ .
    \end{align*}
    All terms on the left are non-negative. Taking the maximum of the kinetic energy over $N'$
    gives a bound by $C_k$, while choosing $N'=N$ bounds the total dissipation by $C_k$.
    Adding these two bounds yields
    \begin{align*}
        \frac{1}{2}\Vert\sqrt{\bar m_k}\bar\bfu_k\Vert_{L^\infty(0,T;L^2)}^2
        +\int_0^T\frac{c_o}{3}\Vert\bar\bfu_k-\bar\bfu_{o,k}\Vert_{L^3}^3\dd t
        +\sum_{n=1}^{N}k\D[P^n](\bfu^n)\leq2C_k \ .
    \end{align*}
    By the weak-$\ast$ convergence established in Step 2 and lower semicontinuity of the norm,
    \begin{align*}
        \esssup_{t\in(0,T)}\frac{1}{2}\Vert\sqrt{m(\cdot,t)}\bfu(t)\Vert_{L^2}^2
        \leq\liminf_k\frac{1}{2}\Vert\sqrt{\bar m_k}\bar\bfu_k\Vert_{L^\infty(0,T;L^2)}^2 \ .
    \end{align*}
    The ocean drag term is weakly lower semicontinuous (convex integral functional and $\bar\bfu_{o,k}\rightarrow \bfu_o$ strongly in $L^3(Q_T)$). For
    the dissipation term we have $P^n\geq P(\cdot,t)$ in $I_n$ (Assumption (B)), so
    as in Step 5, $\sum_n k\D[P^n](\bfu^n)\geq\D_{Q_T}(\bar\bfu_k)$ and thus
    \begin{align*}
        \liminf_k\sum_n k\D[P^n](\bfu^n) \geq \D_{Q_T}(\bfu) \ .
    \end{align*}
    Finally,
    \begin{align*}
        \limsup_k\sum_n kg^n \leq \int_{0}^{T}g \dd t
    \end{align*}
    by Jensen's inequality. The mixed terms converge since $t \mapsto \Vert \bar\bff_k(t)\Vert_{L^{3/2}}$ converges in $L^{3/2}(0,T)$ and $t \mapsto \Vert\bar\bfu_{o,k}(t)\Vert_{L^3}$ in $L^3(0,T)$, hence the product in $L^1(0,T)$.
    Applying these lower semicontinuity estimates to the sum and using the bound on $\limsup_k C_k$
    proves the asserted energy estimate with the factor $2$.
\end{proof}
\subsection{Proof of (b)}
Throughout this subsection, $P(x,t)=P(x)$ and the same admissible rate
$\lambda$ is fixed for both solutions. We use the notation introduced at
the beginning of this section. Since $P$ is independent of time, we have
$\D_t=\D[P]=\D$ for all $t\in[0,T]$.
Lemma \ref{lem:DQT} gives convexity and lower semicontinuity on $L^3(Q_T)^2$ and,
for $\bfv\in C^1([0,T];X)$, the representation
\begin{align}\label{eq:wu-F-smooth}
 \D_{Q_T}[\hat P](\bfv)=\int_0^T\rho(t)\D(\bfv(t))\dd t
       =\int_{Q_T} D(\hat P,\varepsilon(\bfv))\dd x\dd t \ .
\end{align}
The spatial Green identity on $X$ follows from Lemma \ref{lem:a3} applied
to both signs of a field with zero Dirichlet trace.

By \eqref{eq:wu-F-smooth}, the dissipation of the test function in
\eqref{eq:weak_solution} equals $\D_{Q_T}[\hat P](\bfv)$. We use this
representation throughout the proof.

\subsubsection{Approximation with convergence of the dissipation.}

\begin{lemma}\label{lem:wu-relaxation}
For every $\bfw\in L^3(Q_T)^2$ with $\D_{Q_T}[\hat P](\bfw)<\infty$, there is
a sequence $(\bfw_j)_j\subset C^1([0,T];X)$ such that
\begin{align*}
 \bfw_j\rightarrow \bfw\quad\text{in }L^3(Q_T)^2,\qquad
 \D_{Q_T}[\hat P](\bfw_j)\rightarrow\D_{Q_T}[\hat P](\bfw) \ .
\end{align*}
\end{lemma}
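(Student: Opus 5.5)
The plan is a convex-duality argument: I identify $\D_{Q_T}[\hat P]$ with the lower semicontinuous envelope in $L^3(Q_T)^2$ of its restriction to smooth fields. Define $G:L^3(Q_T)^2\to[0,\infty]$ by
\begin{align*}
 G(\bfv):=\int_{Q_T} D(\hat P,\varepsilon(\bfv))\dd x\dd t \quad\text{for }\bfv\in C^1([0,T];X),\qquad G(\bfv):=+\infty\ \text{otherwise}.
\end{align*}
By \eqref{eq:wu-F-smooth}, $G=\D_{Q_T}[\hat P]$ on $C^1([0,T];X)$, and $G$ is finite there by Lemma \ref{lem:prop_time_D}(b). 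Since $C^1([0,T];X)$ is a linear subspace and $D(\hat P,\cdot)$ is convex and $1$-homogeneous, $G$ is convex and positively homogeneous.

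Let $\bar G$ be the lower semicontinuous envelope of $G$ in the metric space $L^3(Q_T)^2$. Then
\begin{align*}
 \bar G(\bfw)=\inf\Bigl\{\liminf_j G(\bfw_j):\bfw_j\rightarrow\bfw\Bigr\},
\end{align*}
and the infimum is attained by some sequence. Since $\D_{Q_T}[\hat P]$ is lower semicontinuous (Lemma \ref{lem:DQT}(a)) and equals $G$ on its domain, we get $\D_{Q_T}[\hat P]\le\bar G$. It therefore suffices to prove the reverse inequality $\bar G\le\D_{Q_T}[\hat P]$. Indeed, for $\bfw$ with finite dissipation, the optimal sequence then satisfies $\limsup_j\D_{Q_T}[\hat P](\bfw_j)\le\D_{Q_T}[\hat P](\bfw)$, while $\liminf_j\D_{Q_T}[\hat P](\bfw_j)\ge\D_{Q_T}[\hat P](\bfw)$ by lower semicontinuity, which is the claim.

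For the reverse inequality I pass to conjugates. $\bar G$ is convex, lower semicontinuous and positively homogeneous, so $\bar G=G^{**}$. Its conjugate $G^*$ is the indicator of
\begin{align*}
 C_G:=\Bigl\{\bfxi\in L^{3/2}(Q_T)^2:\ \langle\bfxi,\bfv\rangle\le G(\bfv)\ \text{for all }\bfv\in C^1([0,T];X)\Bigr\}.
\end{align*}
Likewise $\D_{Q_T}[\hat P]$ is the support function of $C:=\{-\div\bftau:\bftau\in\Sigma_{Q_T}[\hat P]\}$. Hence it suffices to show $C_G\subseteq C$: then $G^*\ge\D_{Q_T}[\hat P]^*$ and so $\bar G=G^{**}\le\D_{Q_T}[\hat P]^{**}\le\D_{Q_T}[\hat P]$.

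The main step, and the expected obstacle, is the stress representation: given $\bfxi\in C_G$, construct $\bftau\in\Sigma_{Q_T}[\hat P]$ with $-\div\bftau=\bfxi$. I would use Hahn--Banach in $L^1(Q_T;\mathbb S)$.
\begin{itemize}
\item On the subspace $\{\varepsilon(\bfv):\bfv\in C^1([0,T];X)\}$ define $\ell(\varepsilon(\bfv)):=\langle\bfxi,\bfv\rangle$. This is well defined and dominated by the sublinear, $L^1$-continuous functional $p(\bfeta):=\int_{Q_T}D(\hat P,\bfeta)$: if $\varepsilon(\bfv)=\varepsilon(\bfv')$, applying the domination to $\pm(\bfv-\bfv')$ gives $\ell$ equal values.
\item Extend $\ell$ to all of $L^1(Q_T;\mathbb S)$ and represent the extension by some $\bftau\in L^\infty(Q_T;\mathbb S)$.
\item From $\int\bftau:\bfeta\le\int D(\hat P,\bfeta)$ for all $\bfeta$, localising with $\bfeta=\mathbf 1_E\bfzeta$ for measurable $E$ and a countable dense set of $\bfzeta\in\mathbb S$ gives $\bftau:\bfzeta\le D(\hat P,\bfzeta)$ a.e. Since $\K$ is closed and convex, this yields $\bftau(x,t)\in\K(\hat P(x,t))$ a.e.
\end{itemize}
To disintegrate in time, I test with $\bfv=\theta(t)\bfvarphi(x)$, where $\theta\in C^1([0,T])$ and $\bfvarphi$ ranges over a countable subset of $X$ that is dense in the $C^1(\bar\Omega)$ fields with zero Dirichlet trace. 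This yields $\int_\Omega\bftau(t):\varepsilon(\bfvarphi)\dd x=\langle\bfxi(t),\bfvarphi\rangle$ for a.e. $t$, simultaneously for all $\bfvarphi$ in the countable set. That gives $\div\bftau(\cdot,t)=-\bfxi(\cdot,t)\in L^{3/2}$ in distributions and the weak Neumann condition, so $\bftau(\cdot,t)\in\Sigma[\hat P(\cdot,t)]$, with $\bfw:=-\bfxi\in L^{3/2}(Q_T)^2$ as the required space-time divergence. Thus $\bftau\in\Sigma_{Q_T}[\hat P]$ and $\bfxi\in C$.

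The delicate points are the density of the countable test family, so that the Green identity defining the weak Neumann trace holds for a.e. $t$, and checking that the Neumann condition in \eqref{eq:def_Sigma} is exactly the one tested against $X$-valued fields. If needed, I would pass from $C^1(\bar\Omega)$ fields to $X$ by density, using $\bftau\in L^\infty$ and $\div\bftau(t)\in L^{3/2}$.
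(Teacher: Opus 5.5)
Your proposal is correct and follows the paper's proof essentially step by step. Like the paper, you relax the functional restricted to $C^1([0,T];X)$, identify the conjugate's domain through a Hahn--Banach stress construction in $L^1(Q_T;\mathbb S)$ with pointwise localisation and time disintegration by product test functions, and conclude by Fenchel--Moreau. The only minor difference is that you obtain $\D_{Q_T}[\hat P]\le\bar G$ from lower semicontinuity together with \eqref{eq:wu-F-smooth}, so you need only the inclusion $C_G\subseteq C$, whereas the paper proves both inclusions, the easy one via the Green identity.
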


\begin{proof}
Define the convex, positively homogeneous functional on $L^3(Q_T)^2$
\begin{align*}
     F(\bfv):=
 \begin{cases}
  \displaystyle\int_{Q_T} D(\hat P,\varepsilon(\bfv))\dd x\dd t,&\text{if }\bfv\in C^1([0,T];X),\\
  +\infty,&\text{if }\bfv\notin C^1([0,T];X) \ .
 \end{cases}
\end{align*}
We show that the lower semicontinuous relaxation of $F$ is exactly
$\D_{Q_T}[\hat P]$.
Its convex conjugate is the indicator function of
\begin{align*}
 C:=\{\bfxi\in L^{3/2}(Q_T)^2: \langle\bfxi,\bfv\rangle\leq F(\bfv)
                         \text{ for all }\bfv\in C^1([0,T];X)\} \ .
\end{align*}
Indeed, positive homogeneity and $F(0)=0$ give $F^*(\bfxi)=0$
for $\bfxi\in C$ and $F^*(\bfxi)=+\infty$ otherwise. We claim that
\begin{align}\label{eq:wu-polar}
 C=\{-\div\bftau:\bftau\in\Sigma_{Q_T}[\hat P]\} \ .
\end{align}
The first inclusion follows from the Green identity and
$\bftau:\varepsilon(\bfv)\leq D(\hat P,\varepsilon(\bfv))$.
For the reverse inclusion, choose $\bfxi\in C$. We define a linear functional
on the subspace $\varepsilon(C^1([0,T];X))$ of $L^1(Q_T;\mathbb S)$ by
\begin{align*}
    \ell(\varepsilon(\bfv)):=\langle\bfxi,\bfv\rangle \ .
\end{align*}
This is well defined. If $\varepsilon(\bfv)=0$, applying the inequality for
$C$ to $\bfv$ and $-\bfv$ gives $\langle\bfxi,\bfv\rangle=0$. Moreover, $\ell$ is
dominated by the continuous sublinear functional
\begin{align*}
     H(E):=\int_{Q_T} D(\hat P,E)\dd x\dd t,
 \qquad 0\leq H(E)\leq c_\K\Vert\hat P\Vert_{L^\infty(Q_T)}\Vert E\Vert_{L^1} \ .
\end{align*}
The Hahn--Banach theorem in its dominated extension form
\cite[Theorem 1.1]{Br11} extends $\ell$ to all of
$L^1(Q_T;\mathbb S)$, still with $\ell(E)\leq H(E)$. Applying this bound to
$E$ and $-E$ shows that the extension is continuous. Hence there is
$\bftau\in L^\infty(Q_T;\mathbb S)$ representing it. Testing the estimate
with constant symmetric matrices multiplied by characteristic functions,
first for matrices with rational entries and then by continuity for all
matrices, gives
\begin{align*}
 \bftau(x,t):E\leq D(\hat P(x,t),E)\quad\text{for all }E\in\mathbb S
 \quad\text{a.e.},
\end{align*}
and therefore $\bftau(x,t)\in\K(\hat P(x,t))$ a.e.
A related use of Hahn--Banach to construct a space-time flux is given in
\cite[Section 5.4]{KS22} for the total variation flow.
The identity
\begin{align*}
 \int_{Q_T}\bfxi\cdot \bfv\dd x\dd t
   =\int_{Q_T}\bftau:\varepsilon(\bfv)\dd x\dd t\quad\text{for all }\bfv\in C^1([0,T];X)
\end{align*}
first gives $-\div\bftau=\bfxi$ in the distributional sense. Applying it to
$\bfv(x,t)=\eta(t)\phi(x)$, and then using a countable dense subset of $X$,
gives the spatial Green identity for all $\phi\in X$ for almost every
$t$. This is the weak zero-traction condition on $\Gamma_N$. Thus
$\bftau\in\Sigma_{Q_T}[\hat P]$, proving \eqref{eq:wu-polar}.

By \eqref{eq:wu-polar} and the definition of $\D_{Q_T}[\hat P]$,
\begin{align*}
 F^{**}(\bfw)=\sup_{\bfxi\in C}\langle\bfxi,\bfw\rangle=\D_{Q_T}[\hat P](\bfw) \ .
\end{align*}
The Fenchel--Moreau theorem \cite[Chapter I]{ET99} identifies $F^{**}$ with the lower
semicontinuous relaxation of the proper convex functional $F$.
The norm closure of its epigraph therefore yields $\bfw_j\in C^1([0,T];X)$ with
$\bfw_j\rightarrow \bfw$ in $L^3(Q_T)^2$ and $F(\bfw_j)\rightarrow\D_{Q_T}[\hat P](\bfw)$. By
\eqref{eq:wu-F-smooth} we conclude $F(\bfw_j)=\D_{Q_T}[\hat P](\bfw_j)$, as claimed.
\end{proof}

\begin{lemma}[Parabolic recovery]\label{lem:wu-parabolic}
For every $\bfw\in L^3(Q_T)^2$ with $\D_{Q_T}[\hat P](\bfw)<\infty$, there is
a sequence $(\bfz_n)_n\subset C^1([0,T];X)$ such that
\begin{align}
 &\bfz_n\rightarrow \bfw\quad\text{in }L^3(Q_T)^2,\qquad
 \D_{Q_T}[\hat P](\bfz_n)\rightarrow\D_{Q_T}[\hat P](\bfw),\label{eq:wu-recovery-energy}\\
 &\Vert\bfz_n(0)-\bfu_0\Vert_{a(0)}\rightarrow0,\notag\\
 &\limsup_{n\rightarrow\infty}\int_{Q_T} a\,\partial_t \bfz_n\cdot(\bfz_n-\bfw)
       \dd x\dd t\leq0 \ .\label{eq:wu-recovery-time}
\end{align}
In particular, $\limsup_n\Hh_{\bfw}(\bfz_n)\leq0$.
\end{lemma}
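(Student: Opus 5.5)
We combine Lemma \ref{lem:wu-relaxation} with a one-sided (backward-in-time) exponential mollification, in the spirit of Landes' regularisation, followed by a diagonal argument.

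\textbf{Step 1: the exponential mollifier.} Fix $\bfw$ with $\D_{Q_T}[\hat P](\bfw)<\infty$. For $\nu>0$ and a fixed field $\bfz_0\in X$ (to be chosen from $\bfu_0$) define
\begin{align*}
 \bfw_\nu(t):=e^{-\nu t}\bfz_0+\nu\int_0^t e^{-\nu(t-s)}\bfw(s)\dd s ,
\end{align*}
which solves $\partial_t\bfw_\nu=\nu(\bfw-\bfw_\nu)$ and $\bfw_\nu(0)=\bfz_0$. This yields
\begin{align*}
 \int_{Q_T} a\,\partial_t\bfw_\nu\cdot(\bfw_\nu-\bfw)\dd x\dd t=-\nu\int_{Q_T}a|\bfw_\nu-\bfw|^2\dd x\dd t\leq0 ,
\end{align*}
so the time condition \eqref{eq:wu-recovery-time} holds with the good sign, even exactly. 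Moreover $\bfw_\nu\rightarrow\bfw$ in $L^3(Q_T)$ as $\nu\rightarrow\infty$, by the standard argument for this mollifier. For the dissipation we exploit the time-independence of $P$ and the fact that $\rho$ is nonincreasing. Then $\bfw_\nu$ is a convex combination (with total weight $1$) of $\bfz_0$ and past values of $\bfw$. Since $\D_{Q_T}[\hat P]$ is convex and the weight $\rho(t)\leq\rho(s)$ for $s\leq t$ works in our favour, we obtain an upper bound of the form
\begin{align*}
 \D_{Q_T}[\hat P](\bfw_\nu)\leq \int_0^T e^{-\nu t}\rho(t)\dd t\,\D(\bfz_0)+\D_{Q_T}[\hat P](\bfw) .
\end{align*}
To make this rigorous for nonsmooth $\bfw$, first prove it for smooth $\bfw$ using \eqref{eq:wu-F-smooth} and Jensen pointwise in $(x,t)$ for $D(\hat P,\cdot)$. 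Then pass to general $\bfw$ via Lemma \ref{lem:wu-relaxation}: the map $\bfw\mapsto\bfw_\nu$ is continuous in $L^3$, and $\D_{Q_T}[\hat P]$ is lower semicontinuous by Lemma \ref{lem:DQT}(a). The first term vanishes as $\nu\rightarrow\infty$ provided $\D(\bfz_0)<\infty$. Combined with lower semicontinuity, this gives $\D_{Q_T}[\hat P](\bfw_\nu)\rightarrow\D_{Q_T}[\hat P](\bfw)$.

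\textbf{Step 2: initial value and smoothness.} Since $\bfu_0$ is only in the weighted space, choose $\bfz_0=\bfz_{0,\ell}\in X$ with $\Vert\bfz_{0,\ell}-\bfu_0\Vert_{a(0)}\rightarrow0$. Such fields exist because $a(0)=m(0)\in L^\infty$ and $C_c^\infty(\Omega)^2\subset X$ is dense in the weighted $L^2$ space (truncate, then mollify). Couple $\ell$ to $\nu$ so that $\nu^{-1}\D(\bfz_{0,\ell})\rightarrow0$. The field $\bfw_\nu$ is then only continuous in time with values in $L^3$, not in $C^1([0,T];X)$. We therefore apply the construction to the smooth approximants $\bfw_j$ of Lemma \ref{lem:wu-relaxation}. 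For $\bfw_j\in C^1([0,T];X)$ and $\bfz_{0,\ell}\in X$, the field $(\bfw_j)_\nu$ lies in $C^1([0,T];X)$, and the time term becomes
\begin{align*}
 \int a\,\partial_t(\bfw_j)_\nu\cdot((\bfw_j)_\nu-\bfw)=-\nu\int a|(\bfw_j)_\nu-\bfw_j|^2+\nu\int a((\bfw_j)_\nu-\bfw_j)\cdot(\bfw-\bfw_j).
\end{align*}
The error term is bounded by $C\nu\Vert(\bfw_j)_\nu-\bfw_j\Vert_{L^2}\Vert\bfw_j-\bfw\Vert_{L^2}$ (with $a\in L^\infty$ and $L^3\subset L^2$). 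For fixed $\nu$ this tends to $0$ as $j\rightarrow\infty$. The dissipation estimate of Step 1 applies directly to $(\bfw_j)_\nu$ and yields $\limsup\leq \nu^{-1}\D(\bfz_{0,\ell})+\D_{Q_T}[\hat P](\bfw_j)$.

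\textbf{Step 3: diagonal argument and the main obstacle.} Choose $\nu_n\rightarrow\infty$ and $\ell_n$, then $j_n$ large, and set $\bfz_n:=(\bfw_{j_n})_{\nu_n}$ with initial value $\bfz_{0,\ell_n}$. Then the $L^3$ convergence holds, the dissipation satisfies $\limsup\leq$ the target while $\liminf\geq$ holds by lower semicontinuity, and the initial condition holds. The time term is $\leq$ an error tending to $0$. Finally, $\Hh_{\bfw}(\bfz_n)$ equals the time integral (note $\rho m=a$) plus $\tfrac12\Vert\bfz_n(0)-\bfu_0\Vert^2_{a(0)}$, so $\limsup_n\Hh_{\bfw}(\bfz_n)\leq0$. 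I expect the main obstacle to be the dissipation bound of Step 1, namely the correct handling of the weight $\rho(t)$ together with the convex combination involving $\bfz_0$. This is exactly where time-independence of $P$ and monotonicity of $\rho$ enter, via $\hat P(x,t)\leq\hat P(x,s)$ for $s\leq t$. The coupling of $\nu$ with $\D(\bfz_{0,\ell})$, which may blow up, must also be arranged carefully.
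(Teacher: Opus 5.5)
Your proposal is essentially the paper's proof. The shared ingredients are: the one-sided exponential mollifier started from an approximation of $\bfu_0$ in $\Vert\cdot\Vert_{a(0)}$, applied to the smooth recovery sequence of Lemma \ref{lem:wu-relaxation}; Jensen with $\rho(t)\leq\rho(s)$ and Fubini for the dissipation; the identity $\partial_t\bfz=\nu(\bfw_j-\bfz)$ for the time defect; and the recovery accuracy chosen last. Your detour through the nonsmooth $\bfw_\nu$ in Step 1 is not needed.

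One point to tighten is the coupling of $\ell$ and $\nu$. It must also give $\nu^{-1}\Vert\bfz_{0,\ell}\Vert_{L^3}^3\rightarrow0$, because $\Vert e^{-\nu t}\bfz_{0,\ell}\Vert_{L^3(Q_T)}^3\leq\Vert\bfz_{0,\ell}\Vert_{L^3}^3/(3\nu)$ and weighted approximants of $\bfu_0$ need not stay bounded in $L^3$. This is why the paper includes $\Vert\boldsymbol{b}_n\Vert_{L^3}^3$ in \eqref{eq:wu-scale}.
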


\begin{proof}
Choose $\boldsymbol{b}_n\in C_c^\infty(\Omega)^2\subset X$ with
\begin{align*}
 \Vert\boldsymbol{b}_n-\bfu_0\Vert_{a(0)}\rightarrow0 \ .
\end{align*}
Such a sequence is obtained by truncation in the weighted $L^2$ norm and
smooth approximation of the bounded truncations in $L^2(\Omega)^2$, since
$a(0)\in L^\infty(\Omega)$.
In particular, $\D(\boldsymbol{b}_n)<\infty$.
Choose $0<h_n\leq 1/n$ small enough that
\begin{align}\label{eq:wu-scale}
 h_n\bigl(\Vert\boldsymbol{b}_n\Vert_{L^3}^3+\D(\boldsymbol{b}_n)\bigr)\leq1/n \ .
\end{align}
Lemma \ref{lem:wu-relaxation} allows
us to choose $\bfw_n\in C^1([0,T];X)$ such that
\begin{align}\label{eq:wu-input-recovery}
    \begin{split}
    \Vert\bfw_n-\bfw\Vert_{L^3(Q_T)}&\leq h_n^2,\\
    |\D_{Q_T}[\hat P](\bfw_n)-\D_{Q_T}[\hat P](\bfw)|&\leq1/n\ .
    \end{split}
\end{align}
Following the exponential time mollification in \cite[Section 3.2]{E25}, set
\begin{align}\label{eq:wu-filter}
 \bfz_n(t)=\exp(-t/h_n)\boldsymbol{b}_n
       +\frac{1}{h_n}\int_0^t \exp(-(t-s)/h_n)\bfw_n(s)\dd s \ .
\end{align}
This is an $X$-valued integral. We have $\bfz_n\in C^1([0,T];X)$, $\bfz_n(0)=\boldsymbol{b}_n$ and
\begin{align}\label{eq:wu-filter-derivative}
 \partial_t \bfz_n=(\bfw_n-\bfz_n)/h_n \ .
\end{align}
The convolution term in \eqref{eq:wu-filter} defines an operator on
$L^3(Q_T)^2$ with norm at most $1-\exp(-T/h_n)<1$ by Young's convolution
inequality \cite[Chapter 4]{Br11}.
After extension by zero to negative times, continuity of translations and
concentration of the kernel at zero show strong convergence to the identity
as $h_n\rightarrow0$. Direct integration and \eqref{eq:wu-scale} give
\begin{align*}
 \Vert \exp(-t/h_n)\boldsymbol{b}_n\Vert_{L^3(Q_T)}^3\leq\frac{h_n}{3}\Vert\boldsymbol{b}_n\Vert_{L^3}^3\leq\frac{1}{3n}\rightarrow0 \ .
\end{align*}
Together with \eqref{eq:wu-input-recovery}, this proves $\bfz_n\rightarrow \bfw$ in $L^3(Q_T)^2$.

We next control the dissipation by convexity and an exchange of the time
integrals, as in \cite[Lemma 3.3]{E25}. Convexity of $\D$ and the fact that
\begin{align*}
    \exp(-t/h_n)
       +\frac{1}{h_n}\int_0^t \exp(-(t-s)/h_n)\dd s  = 1
\end{align*}
imply by Jensen's inequality that
\begin{align*}
 \D(\bfz_n(t))\leq \exp(-t/h_n)\D(\boldsymbol{b}_n)
       +\frac{1}{h_n}\int_0^t \exp(-(t-s)/h_n)\D(\bfw_n(s))\dd s \ .
\end{align*}
Since $P$ is independent of time, the same spatial functional $\D$
occurs at $t$ and $s$. Multiplying by $\rho(t)$, using
$\rho(t)\leq\rho(s)$ for $s\leq t$, and applying Fubini gives
\begin{align}
 \D_{Q_T}[\hat P](\bfz_n)
 &\leq h_n\D(\boldsymbol{b}_n)\notag\\
 &\quad+\int_0^T\D(\bfw_n(s))
       \left(\int_s^T\frac{\rho(t)}{h_n}\exp(-(t-s)/h_n)\dd t\right)\dd s
       \notag\\
 &\leq h_n\D(\boldsymbol{b}_n)+\D_{Q_T}[\hat P](\bfw_n) \ .\label{eq:wu-filter-energy}
\end{align}
Thus $\limsup_n\D_{Q_T}[\hat P](\bfz_n)\leq\D_{Q_T}[\hat P](\bfw)$. Lower semicontinuity and $\bfz_n\rightarrow \bfw$
give the reverse bound, proving \eqref{eq:wu-recovery-energy}.

Finally, \eqref{eq:wu-filter-derivative} yields the exact identity
\begin{align}\label{eq:wu-time-defect}
 \int_{Q_T} a\,\partial_t \bfz_n\cdot(\bfz_n-\bfw)\dd x\dd t
 ={}&-h_n\int_{Q_T} a|\partial_t \bfz_n|^2\dd x\dd t\notag\\
   &+\int_{Q_T} a\,\partial_t \bfz_n\cdot(\bfw_n-\bfw)\dd x\dd t\ .
\end{align}
Both $(\bfw_n)$ and $(\bfz_n)$ are bounded in $L^3(Q_T)^2$, so
$\Vert\partial_t \bfz_n\Vert_{L^3(Q_T)}\leq C/h_n$. Since $|Q_T|<\infty$, H\"older's
inequality and \eqref{eq:wu-input-recovery} give
\begin{align*}
 &\left|\int_{Q_T} a\,\partial_t \bfz_n\cdot(\bfw_n-\bfw)\dd x\dd t\right|\\
 &\quad\leq\Vert a\Vert_{L^\infty(Q_T)} |Q_T|^{1/3}
       \Vert\partial_t \bfz_n\Vert_{L^3(Q_T)}\Vert\bfw_n-\bfw\Vert_{L^3(Q_T)}\\
 &\quad\leq C h_n\rightarrow0 \ .
\end{align*}
The first term on the right of \eqref{eq:wu-time-defect} is non-positive.
This proves \eqref{eq:wu-recovery-time}, including its initial-value claim.
\end{proof}

\subsubsection{Recovering the operator at the solution.}

\begin{lemma}\label{lem:wu-unshift}
Every weak solution in the sense of Definition \ref{def:solution_evolution}
satisfies, for all $\bfv\in C^1([0,T];X)$,
\begin{align}\label{eq:wu-unshifted}
    \begin{split}
    \Hh_{\bfu}(\bfv)&+\D_{Q_T}[\hat P](\bfv)-\D_{Q_T}[\hat P](\bfu)\\
    &+\int_0^T\rho(t)
       \langle\B(t;\bfu)-\bff,\bfv-\bfu\rangle\dd t\geq0\ .
    \end{split}
\end{align}
\end{lemma}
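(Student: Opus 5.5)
Our approach is the classical Minty trick: we replace the test function by a convex combination with the solution and let the weight tend to zero. Throughout, $P$ is time-independent, so \eqref{eq:wu-F-smooth} identifies the test-function dissipation in \eqref{eq:weak_solution} with $\D_{Q_T}[\hat P](\bfv)$. The difficulty is that $\bfu$ itself is not an admissible test function. We therefore first extend the weak inequality to a larger class of test functions $\bfz$, using Lemma \ref{lem:wu-parabolic} to approximate $\bfu$.

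\emph{Step 1.} Fix $\bfv\in C^1([0,T];X)$ and $s\in(0,1]$. Let $(\bfz_n)$ be the parabolic recovery sequence of Lemma \ref{lem:wu-parabolic} for $\bfw=\bfu$. Define the admissible test function $\bfv_{s,n}:=\bfz_n+s(\bfv-\bfz_n)\in C^1([0,T];X)$ and insert it into \eqref{eq:weak_solution}. We then pass to the limit $n\to\infty$ term by term.

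\emph{Dissipation.} Convexity gives $\D_{Q_T}[\hat P](\bfv_{s,n})\leq(1-s)\D_{Q_T}[\hat P](\bfz_n)+s\D_{Q_T}[\hat P](\bfv)$, and the first term converges to $(1-s)\D_{Q_T}[\hat P](\bfu)$.

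\emph{Operator term.} We have $\bfv_{s,n}\to\bfu+s(\bfv-\bfu)=:\bfu_s$ strongly in $L^3(Q_T)$. By continuity of $\B$ from $L^3$ to $L^{3/2}$, the term converges to $\int_0^T\rho\langle\B(t;\bfu_s)-\bff,s(\bfv-\bfu)\rangle\dd t$.

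\emph{Time term.} Expand $\bfv_{s,n}-\bfu=(1-s)(\bfz_n-\bfu)+s(\bfv-\bfu)+s(\bfu-\bfz_n)\cdot 0$, and more precisely $\bfv_{s,n}-\bfu=(1-s)(\bfz_n-\bfu)+s(\bfv-\bfu)$. The time derivative is $\partial_t\bfv_{s,n}=(1-s)\partial_t\bfz_n+s\partial_t\bfv$, and the product produces four terms:
\begin{itemize}
\item $(1-s)^2\int a\,\partial_t\bfz_n\cdot(\bfz_n-\bfu)$ has $\limsup\leq 0$ by \eqref{eq:wu-recovery-time}.
\item $s^2\int a\,\partial_t\bfv\cdot(\bfv-\bfu)$ is harmless.
\item $s(1-s)\int a\,\partial_t\bfv\cdot(\bfz_n-\bfu)\to0$ by strong convergence.
\item $s(1-s)\int a\,\partial_t\bfz_n\cdot(\bfv-\bfu)$ is the critical term.
\end{itemize}

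\emph{Critical term.} Rewrite it as $\int a\,\partial_t\bfz_n\cdot(\bfv-\bfz_n)+\int a\,\partial_t\bfz_n\cdot(\bfz_n-\bfu)$. The second piece has $\limsup\le0$. For the first piece, integrate by parts in time, writing $\partial_t\bfz_n\cdot(\bfv-\bfz_n)=-\partial_t(\bfv-\bfz_n)\cdot(\bfv-\bfz_n)+\partial_t\bfv\cdot(\bfv-\bfz_n)$. Since $\partial_ta\leq0$ by \eqref{eq:wu-a-monotone}, we get $-\int a\,\tfrac12\partial_t|\bfv-\bfz_n|^2\leq\tfrac12\Vert\bfv(0)-\bfz_n(0)\Vert_{a(0)}^2$. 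The boundary term at $T$ has a good sign, and the $\partial_ta$ term is $\frac12\int\partial_t a|\cdot|^2\le0$. Because $\bfz_n(0)\to\bfu_0$ in the $a(0)$ seminorm, all initial contributions combine with the initial term of $\Hh$.

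This bookkeeping of the time term is the main obstacle. One must verify that, after collecting everything, the limsup of $\Hh_{\bfu}(\bfv_{s,n})$ is at most $s\,\Hh_{\bfu}(\bfv)+o(s)$, or at least $s\cdot(\text{something finite})$ with the correct leading coefficient. If the cross terms turn out not to combine so cleanly, the fallback is to use the exact identity \eqref{eq:wu-filter-derivative}, $\partial_t\bfz_n=(\bfw_n-\bfz_n)/h_n$, to bound the cross term directly. The initial term $\frac12\Vert\bfv_{s,n}(0)-\bfu_0\Vert_{a(0)}^2\to\frac{s^2}{2}\Vert\bfv(0)-\bfu_0\Vert_{a(0)}^2$ is $O(s^2)$.

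\emph{Step 2.} Collecting the limits, we obtain
\[
s\bigl(\D_{Q_T}[\hat P](\bfv)-\D_{Q_T}[\hat P](\bfu)\bigr)+s\int_0^T\rho\langle\B(t;\bfu_s)-\bff,\bfv-\bfu\rangle\dd t+s\,\Hh_{\bfu}(\bfv)+O(s^2)\geq0 .
\]
Here the $\Hh$ contribution enters with coefficient $s$ from the cross terms plus $O(s^2)$ terms. Dividing by $s$ and letting $s\downarrow0$ gives $\bfu_s\to\bfu$ in $L^3$. The hemicontinuity and continuity of the Nemytskii operator $\B$ then yield \eqref{eq:wu-unshifted}.
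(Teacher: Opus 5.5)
Your proof is correct and is essentially the paper's argument: the same Minty convex combination $(1-s)\bfz_n+s\bfv$ built from the parabolic recovery sequence for $\bfu$, convexity of $\D_{Q_T}[\hat P]$, and continuity of $\B$. The one difference is the time term, where the paper invokes convexity of $\Hh_{\bfu}$ (after integrating by parts in time, its quadratic part $\frac12\Vert\bfv(T)\Vert_{a(T)}^2+\frac12\int_{Q_T}(-\partial_t a)|\bfv|^2$ is nonnegative), whereas you check this by hand through the four-term expansion; those estimates combine exactly (the critical term contributes at most $s(1-s)\Hh_{\bfu}(\bfv)$, while the $s^2$-term plus the limit of the initial term is $s^2\Hh_{\bfu}(\bfv)$), so $\limsup_n\Hh_{\bfu}(\bfv_{s,n})\le s\,\Hh_{\bfu}(\bfv)$ and no fallback is needed.
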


\begin{proof}
We combine the recovery sequence with a Minty perturbation argument
\cite[Section 2.3]{R13}.
For fixed $\bfu$, the functional $\bfv\mapsto\Hh_{\bfu}(\bfv)$ is convex on $C^1([0,T];X)$.
Integration by parts involving only the smooth test function gives
\begin{align}
 \Hh_{\bfu}(\bfv)
 ={}&\frac{1}{2}\Vert\bfv(T)\Vert_{a(T)}^2
     +\frac{1}{2}\int_{Q_T}(-\partial_ta)|\bfv|^2\dd x\dd t
     -\int_{Q_T} a\,\partial_t \bfv\cdot \bfu\dd x\dd t\notag\\
    &-\int_\Omega a(0)\bfv(0)\cdot \bfu_0\dd x
     +\frac{1}{2}\Vert\bfu_0\Vert_{a(0)}^2 \ .\label{eq:wu-H-convex}
\end{align}
All terms are finite. The quadratic terms are non-negative by
\eqref{eq:wu-a-monotone}, and the remaining terms are affine in $\bfv$,
which proves convexity.
Fix $\bfv\in C^1([0,T];X)$ and $0<\theta<1$. Apply Lemma \ref{lem:wu-parabolic} to $\bfw=\bfu$
and use the admissible test functions
\begin{align*}
 \bfv_{n,\theta}=(1-\theta)\bfz_n+\theta \bfv \ .
\end{align*}
Convexity of $\Hh_{\bfu}$ and $\D_{Q_T}[\hat P]$, followed by \eqref{eq:weak_solution}, gives
\begin{align*}
 0\leq{}&(1-\theta)
       \bigl[\Hh_{\bfu}(\bfz_n)+\D_{Q_T}[\hat P](\bfz_n)-\D_{Q_T}[\hat P](\bfu)\bigr]\\
 &+\theta\bigl[\Hh_{\bfu}(\bfv)+\D_{Q_T}[\hat P](\bfv)-\D_{Q_T}[\hat P](\bfu)\bigr]\\
 &+\int_0^T\rho(t)
       \langle\B(t;\bfv_{n,\theta})-\bff,\bfv_{n,\theta}-\bfu\rangle\dd t \ .
\end{align*}
Now $\bfv_{n,\theta}\rightarrow \bfu+\theta(\bfv-\bfu)$ in $L^3(Q_T)^2$. The continuity
$\B:L^3(Q_T)^2\rightarrow L^{3/2}(Q_T)^2$ and the recovery properties imply, on taking the upper
limit as $n\rightarrow\infty$,
\begin{align*}
 0\leq{}&\theta\bigl[\Hh_{\bfu}(\bfv)+\D_{Q_T}[\hat P](\bfv)-\D_{Q_T}[\hat P](\bfu)\bigr]\\
 &+\theta\int_0^T\rho(t)
       \langle\B(t;\bfu+\theta(\bfv-\bfu))-\bff,\bfv-\bfu\rangle\dd t \ .
\end{align*}
Divide by $\theta$ and let $\theta\downarrow0$, again using continuity
of $\B$. This proves \eqref{eq:wu-unshifted}.
\end{proof}

\subsubsection{Comparison of two weak solutions.}

\begin{proof}[Proof of Theorem \ref{th:rothe}(b)]
Let $\bfu_1,\bfu_2$ be two weak solutions for the same data. Put
\begin{align*}
 \bfw=\frac{\bfu_1+\bfu_2}{2},\qquad\delta=\bfu_1-\bfu_2 \ .
\end{align*}
Convexity gives $\D_{Q_T}[\hat P](\bfw)\leq\frac{1}{2}(\D_{Q_T}[\hat P](\bfu_1)+\D_{Q_T}[\hat P](\bfu_2))<\infty$.
Apply Lemma \ref{lem:wu-parabolic} to $\bfw$, with the common initial datum
$\bfu_0$, and denote the resulting sequence by $\bfz_n$.

Use $\bfv=\bfz_n$ in \eqref{eq:wu-unshifted} for both $\bfu_1$ and $\bfu_2$ and add.
The time and initial terms add to $2\Hh_{\bfw}(\bfz_n)$, hence
\begin{align*}
 0\leq{}&2\Hh_{\bfw}(\bfz_n)+2\D_{Q_T}[\hat P](\bfz_n)\\
 &-\D_{Q_T}[\hat P](\bfu_1)-\D_{Q_T}[\hat P](\bfu_2)\\
 &+\int_0^T\rho(t)\Big[
       \langle\B(t;\bfu_1)-\bff,\bfz_n-\bfu_1\rangle
       +\langle\B(t;\bfu_2)-\bff,\bfz_n-\bfu_2\rangle\Big]\dd t \ .
\end{align*}
By strong convergence $\bfz_n\rightarrow \bfw$ in $L^3(Q_T)^2$, the last line converges to
\begin{align*}
 -\frac{1}{2}\int_0^T\rho(t)
       \langle\B(t;\bfu_1)-\B(t;\bfu_2),\delta\rangle\dd t \ .
\end{align*}
The recovery properties and convexity of $\D_{Q_T}[\hat P]$ therefore give
\begin{align*}
 0
 &\leq 2\D_{Q_T}[\hat P](\bfw)-\D_{Q_T}[\hat P](\bfu_1)-\D_{Q_T}[\hat P](\bfu_2)\\
 &\qquad-\frac{1}{2}\int_0^T\rho(t)
           \langle\B(t;\bfu_1)-\B(t;\bfu_2),\delta\rangle\dd t\\
 &\leq-\frac{c_o}{4}\int_0^T\rho(t)\Vert\delta(t)\Vert_{L^3}^3\dd t
 \leq0 \ .
\end{align*}
Since $c_o>0$ and $\rho(t)\geq \exp(-\Vert\lambda\Vert_{L^1(0,T)})>0$, it follows that
$\delta=0$ in $L^3(Q_T)^2$. This proves uniqueness on all of $Q_T$. Existence follows from part (a).
\end{proof}

\begin{remark}
Time independence of $P$ is used in the dissipation estimate
\eqref{eq:wu-filter-energy}. No positive lower bound for $P$ or $m$, no
Korn or Poincar\'e inequality, and no time derivative of either solution
are needed. The approximation order is essential: first choose $\boldsymbol{b}_n$,
then $h_n$, and only then a sufficiently accurate recovery $\bfw_n$.
This avoids requiring a convergence rate in Lemma \ref{lem:wu-relaxation}.
The assertion concerns uniqueness of the velocity only.
\end{remark}

The combination of time mollification and energy approximation has related
uses in the scalar $BV$ setting of \cite[Sections 3 and 4]{E25}. The lemmas above establish
the approximation and comparison steps directly for the weighted
vector-valued formulation used here.

\section{Open problems}
The main open problem remains the coupling
$\bfu\mapsto(h,A)\mapsto(P,m)$ through the transport equations
\begin{align*}
    \partial_t h+\div(h\bfu)&=S_h,\\
    \partial_t A+\div(A\bfu)&=S_A \ .
\end{align*}
The velocity is a priori only in $L^3(Q_T)$. Under the additional spatial
Lipschitz assumption on $P$, Theorem \ref{th:measure_structure} gives local
$BD$ regularity on $\{P>0\}$ for spatial fields of finite dissipation and
a bound on the negative part of their weighted divergence. The evolution
theorem, however, permits merely continuous time-dependent $P$. A
corresponding space-time measure statement under those general assumptions
is not asserted here. Even with additional regularity of $P$, its use for
the evolution problem requires a space-time argument or a suitable
identification and estimate of the time-slice dissipations. Such information
does not by itself supply the velocity regularity needed for a coupling
theorem. The strong well-posedness results in \cite{LTT22,BDHH22,B25} concern
regularised or modified stresses in higher-order Sobolev settings.

Theorem \ref{th:rothe} establishes existence of weak variational solutions
for time-dependent coefficients and uniqueness for time-independent $P$.
Weak uniqueness for general time-dependent $P$ remains open in this
framework. In the proof of part (b),
time independence is used to control the dissipation of a one-sided time
convolution. This estimate is not available from continuity of $P$ alone.

Lemma \ref{lem:stress_reconstruction} identifies an admissible saturated
stress for the time-discrete variational problem. A full stress-and-flow-rule
formulation for the weak evolution solutions, including the interpretation
of any internal interface conditions, requires further analysis. Related
duality and relaxation methods in plasticity are developed in \cite{RG80,KT83}.

\section{Appendix}

\subsection{Normal trace}
The construction follows the slicing technique for normal traces of Anzellotti \cite{A83}. See also the trace theory of the Hencky stress spaces in \cite{KT83}.

The assumption (A1) is central for this result. It implies $\Gamma_N\cap\bar\Gamma_D = \emptyset$,
so $\bar\Gamma_D \setminus \Gamma_D \subseteq \partial\Omega\setminus(\Gamma_D\cup\Gamma_N)$ and therefore $\mathcal{H}^1(\bar\Gamma_D\setminus\Gamma_D) = 0$.
\begin{definition}
    For $\bftau\in L^\infty(\Omega;\mathbb{S})$ with $\div\bftau\in V^*$, define
    \begin{align}
        \langle \bftau\nu, \bfw \rangle := \int_\Omega \bftau : \varepsilon(\bfw) \dd x + \int_\Omega \bfw \cdot \div \bftau \dd x, \quad\text{for all }\bfw\in W^{1,3}(\Omega)^2\ . \label{eq:normaltrace}
    \end{align}
\end{definition}
Both integrals are finite, and for smooth $\bftau$ and $\bfw$ we have
$\langle \bftau\nu, \bfw \rangle = \int_{\partial\Omega} \bfw \cdot (\bftau \nu) \dd \mathcal{H}^1$ by the Gauss theorem. Thus \eqref{eq:normaltrace} is a distributional normal trace. For $\bftau \in \Sigma$ we have
$\langle\bftau\nu,\bfw\rangle = 0$ for all $\bfw \in C^1(\bar\Omega; \mathbb R^2)$ with $\bfw_{|\Gamma_D} =0$.

\begin{lemma}\label{lem:a2}
    Let $\bftau \in \Sigma$ and $\bfw \in W^{1,3}(\Omega)^2$ with $\bfw =0$ a.e.\ in $U\cap \Omega$ for an open set $U\subseteq \mathbb R^2$
    with $\bar\Gamma_D \subset U$. Then $\langle \bftau\nu, \bfw \rangle =0$.
\end{lemma}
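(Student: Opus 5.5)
The plan is to approximate $\bfw$ by fields in $C^1(\bar\Omega;\mathbb R^2)$ that still vanish near $\bar\Gamma_D$. For these fields the normal trace is zero by the defining property of $\Sigma$. We then pass to the limit in \eqref{eq:normaltrace}, which is continuous in $\bfw$ with respect to the $W^{1,3}$-norm. Indeed, $|\langle\bftau\nu,\bfw\rangle|\leq\Vert\bftau\Vert_{L^\infty}\Vert\varepsilon(\bfw)\Vert_{L^1}+\Vert\div\bftau\Vert_{L^{3/2}}\Vert\bfw\Vert_{L^3}\leq C\Vert\bfw\Vert_{W^{1,3}}$. So it suffices to produce $\bfw_j\in C^1(\bar\Omega)^2$ with $\bfw_j\to\bfw$ in $W^{1,3}(\Omega)$ and $\bfw_j=0$ on a neighbourhood of $\bar\Gamma_D$, in particular $\bfw_j|_{\Gamma_D}=0$.

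For the construction, we first shrink the open set. Since $\bar\Gamma_D$ is compact and contained in $U$, there is $r>0$ with $U_{2r}(\bar\Gamma_D):=\{x:\dist(x,\bar\Gamma_D)<2r\}\subset U$. If $\Gamma_D=\emptyset$, nothing needs to vanish and the claim is just density of $C^1(\bar\Omega)$ in $W^{1,3}(\Omega)$ for the Lipschitz domain $\Omega$. In general, we take a Lipschitz extension approach. Extend $\bfw$ to $E\bfw\in W^{1,3}(\mathbb R^2)^2$ by a (Stein/Calderón) extension operator for Lipschitz domains, and mollify: $\bfw_j:=(E\bfw)*\varphi_{1/j}$ restricted to $\bar\Omega$. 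This converges to $\bfw$ in $W^{1,3}(\Omega)$. The issue is that $E\bfw$ need not vanish near $\bar\Gamma_D$ outside $\Omega$, so the mollifications need not vanish there either.

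To fix this, multiply by a cutoff. Choose $\psi\in C^\infty(\mathbb R^2)$ with $\psi=0$ on $U_{r}(\bar\Gamma_D)$, $\psi=1$ outside $U_{2r}(\bar\Gamma_D)$ and $0\leq\psi\leq1$. Since $\bfw=0$ a.e.\ on $U\cap\Omega\supset U_{2r}(\bar\Gamma_D)\cap\Omega$, we have $\psi\bfw=\bfw$ in $W^{1,3}(\Omega)$; the product rule gives $\nabla(\psi\bfw)=\psi\nabla\bfw+\bfw\otimes\nabla\psi$, and both correction terms vanish a.e.\ where $\psi\neq1$. Set $\tilde\bfw_j:=\psi\,\bfw_j\in C^\infty(\bar\Omega)^2$. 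Then $\tilde\bfw_j\to\psi\bfw=\bfw$ in $W^{1,3}(\Omega)$, because multiplication by the smooth bounded $\psi$ with bounded gradient is continuous on $W^{1,3}$. Moreover $\tilde\bfw_j=0$ on $U_r(\bar\Gamma_D)\cap\bar\Omega$, so in particular $\tilde\bfw_j|_{\Gamma_D}=0$ and $\langle\bftau\nu,\tilde\bfw_j\rangle=0$ for $\bftau\in\Sigma$. Continuity of the pairing then yields $\langle\bftau\nu,\bfw\rangle=0$.

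The main obstacle is the density step: the approximation must be uniform up to the boundary, which is why the argument uses the extension operator on the Lipschitz domain from (A1) rather than interior mollification. The cutoff trick removes any need to control the extension near $\Gamma_D$. One must also make sure the defining property of $\Sigma$ is applied to $C^1(\bar\Omega)$ fields with zero trace on $\Gamma_D$. The $\tilde\bfw_j$ are such fields, and they even vanish on $\bar\Gamma_D$, so the set $\bar\Gamma_D\setminus\Gamma_D$ of measure zero causes no difficulty.
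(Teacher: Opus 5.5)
Your proof is correct and follows essentially the same route as the paper: extend $\bfw$ to $W^{1,3}(\mathbb R^2)$, kill it near $\bar\Gamma_D$ with a cutoff, mollify, apply the defining property of $\Sigma$ to the resulting $C^1(\bar\Omega)$ fields, and pass to the limit using $W^{1,3}$-continuity of the pairing. The only difference is the order of operations: the paper multiplies $E\bfw$ by a Lipschitz cutoff before mollifying, so mollification with radius below $\eta'/2$ keeps the fields zero near $\bar\Gamma_D$, while you mollify first and then multiply by a smooth cutoff $\psi$. Both orders work equally well.
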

\begin{proof}
    Since $\bar\Gamma_D$ is compact and $U$ open, there exists an $\eta'>0$ with $U_{3\eta'}(\bar\Gamma_D) \subset U$.
    Let $E:W^{1,3}(\Omega) \rightarrow W^{1,3}(\mathbb R^2)$ be an extension operator and
    $\chi\in\mathrm{Lip}(\mathbb R^2)$ with $\chi=0$ in $U_{\eta'}(\bar\Gamma_D)$
    and $\chi=1$ in $\mathbb R^2 \setminus U_{2\eta'}(\bar\Gamma_D)$, $0\leq \chi\leq 1$. Set $g := \chi E\bfw \in W^{1,3}(\mathbb R^2)$. Then $g=\bfw$
    in $\Omega$, and for $\rho<\eta'/2$, $g_\rho:= g * \varphi_\rho\in C^\infty(\mathbb{R}^2)$ with $g_\rho =0$ in $U_{\eta'/2}(\bar\Gamma_D)$ and $\varphi_\rho$ the standard mollifier. In particular $g_{\rho|\Gamma_D} = 0$
    and $g_{\rho|\bar\Omega} \in C^1(\bar\Omega)$. Now it follows from the definition
    that $\langle\bftau\nu, g_\rho\rangle =0$, and since $g_\rho \rightarrow g$ in
    $W^{1,3}(\mathbb R^2)$, also $g_{\rho|\Omega} \rightarrow \bfw $ in $W^{1,3}(\Omega)$. Thus both integrals converge in \eqref{eq:normaltrace} and
    $\langle\bftau\nu,\bfw\rangle =0$.
\end{proof}
\begin{lemma}\label{lem:a3}
    Under Assumption \ref{ass}, for every $\bftau\in\Sigma$ and $\bfv \in W^{1,3}(\Omega)^2$
    \begin{align*}
        - \langle\bftau\nu,\bfv\rangle \leq \int_{\Gamma_D} D(P, -(\bfv\odot \nu)) \dd \mathcal{H}^1
    \end{align*}
    where $\bfv$ on $\Gamma_D$ is evaluated in the sense of traces.
\end{lemma}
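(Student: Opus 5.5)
Our plan is to localise the distributional normal trace to a neighbourhood of $\bar\Gamma_D$, approximate it there by boundary integrals over curves parallel to $\partial\Omega$ (the slicing idea of \cite{A83}), and use the pointwise bound $\bftau:\bfxi\leq D(P,\bfxi)$ together with continuity of $P$.

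\textbf{Step 1 (localisation).} Fix $\eta>0$ and a Lipschitz cutoff $\psi_\eta$ with $\psi_\eta=1$ on $U_{\eta}(\bar\Gamma_D)$ and $\psi_\eta=0$ outside $U_{2\eta}(\bar\Gamma_D)$. Split $\bfv=\psi_\eta\bfv+(1-\psi_\eta)\bfv$. The second summand vanishes near $\bar\Gamma_D$, so $\langle\bftau\nu,(1-\psi_\eta)\bfv\rangle=0$ by Lemma~\ref{lem:a2}. It therefore suffices to bound $-\langle\bftau\nu,\psi_\eta\bfv\rangle$.

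\textbf{Step 2 (slicing).} Consider $\phi_h(x):=\min(1,\dist(x,\partial\Omega)/h)$. Then $\bfw_h:=(1-\phi_h)\psi_\eta\bfv$ differs from $\psi_\eta\bfv$ by a field $\phi_h\psi_\eta\bfv$ with compact support in $\Omega$. For a compactly supported $W^{1,3}$ field, approximation by $C_c^\infty$ functions shows that the normal trace \eqref{eq:normaltrace} vanishes. Hence $\langle\bftau\nu,\psi_\eta\bfv\rangle=\langle\bftau\nu,\bfw_h\rangle$.

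Expanding \eqref{eq:normaltrace}, the terms $\int_\Omega\bftau:(1-\phi_h)\varepsilon(\psi_\eta\bfv)\dd x$ and $\int_\Omega\bfw_h\cdot\div\bftau\dd x$ tend to $0$ as $h\to0$ by dominated convergence, since they live on a strip of width $h$. What remains is
\[
-\langle\bftau\nu,\psi_\eta\bfv\rangle=\lim_{h\to 0}\frac1h\int_{\{d<h\}}\bftau:\bigl(\psi_\eta\bfv\odot\nabla d\bigr)\dd x+o(1),
\qquad d:=\dist(\cdot,\partial\Omega).
\]
Near the boundary $\nabla d\approx-\nu$. Pointwise,
\[
\bftau:\bigl(\psi_\eta\bfv\odot(-\nabla d)\bigr)\leq D\bigl(P,-\psi_\eta\bfv\odot(-\nabla d)\bigr),
\]
which turns the integrand into a dissipation density.

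\textbf{Step 3 (main obstacle).} For a general Lipschitz domain, $\nabla d$ does not converge nicely to $-\nu$, and the strip average must converge to the trace integral. We would avoid this by working in local Lipschitz graph charts with a partition of unity. In a chart we replace $d$ by the vertical distance $x_2-g(x_1)$. The coarea/Fubini formula in the chart then shows that the strip average equals an average over translated graphs $\{x_2=g(x_1)+s\}$, $0<s<h$. On these graphs the normal is exactly the boundary normal at the same $x_1$, after rescaling by $\sqrt{1+g'^2}$. Since $\bfv\in W^{1,3}$, the translated traces converge in $L^1$ to the trace on $\Gamma_D$. Both $P$ and $D(P,\cdot)$ are continuous with linear growth (Lemma~\ref{lem:prop_D}), so the averages converge to
\[
\int_{\partial\Omega}\psi_\eta\, D\bigl(P,-(\bfv\odot\nu)\bigr)\dd\mathcal H^1.
\]
Since $\tau$ enters only through the pointwise inequality, no trace of $\bftau$ is needed. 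A partition of unity with nonnegative weights keeps the inequality, because $D$ is positively homogeneous.

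\textbf{Step 4 (limit $\eta\to0$).} Let $\eta\to0$. Then $\psi_\eta\to\mathbf 1_{\bar\Gamma_D}$ on $\partial\Omega$, and $\mathcal H^1(\bar\Gamma_D\setminus\Gamma_D)=0$. Dominated convergence, with majorant $c_\K\|P\|_\infty|\bfv|$, gives the claimed bound over $\Gamma_D$.
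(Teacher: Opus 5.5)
Your argument is correct. It shares the paper's outer frame:
\begin{itemize}
\item localisation near $\bar\Gamma_D$ via Lemma~\ref{lem:a2};
\item Lipschitz graph charts with a partition of unity;
\item the measure-preserving map $(x_1,s)\mapsto(x_1,g(x_1)+s)$ onto translated graphs;
\item ACL convergence of the translated traces;
\item the limit $\eta\to0$ using $\mathcal{H}^1(\bar\Gamma_D\setminus\Gamma_D)=0$.
\end{itemize}
The core step, however, is genuinely different.

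The paper works with thin slices. It applies the Gauss theorem on each subdomain $D_t$ above the translated graph $S_t$ to a mollified stress $\bftau_\rho$. By Fubini it selects a.e.\ $t$ for which $\mathcal{H}^1$-a.e.\ point of $S_t$ is a Lebesgue point of $\bftau$ with $\bftau\in\K(P)$. It then lets $\rho\to0$ on such a slice, and afterwards $t\to0$, using monotone exhaustion on the volume side.

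You instead insert the Lipschitz cutoff $\min(1,s/h)$ directly into the definition \eqref{eq:normaltrace}. This turns the pairing into the strip average $h^{-1}\int_{\{0<s<h\}}\bftau:(\bfw\odot\nabla s)\dd x$, which is an average over $s\in(0,h)$ of slice integrals. The pointwise bound $\bftau:\bfxi\leq D(P,\bfxi)$ is then applied in the volume, where $\bftau(x)\in\K(P(x))$ holds a.e.\ without any selection. Your route needs no mollification, no Lebesgue-point selection and no Gauss theorem on $D_t$, and only averaged convergence of the translated boundary integrals. The paper's route yields the slice inequality for a.e.\ individual $t$, which is slightly more information but is not needed for the lemma.

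Two small slips should be fixed.
\begin{itemize}
\item The field $\phi_h\psi_\eta\bfv$ is not compactly supported in $\Omega$, since $\min(1,s/h)>0$ throughout $\Omega$; it only has zero trace. You can invoke that zero-trace $W^{1,3}$ fields on a Lipschitz domain lie in $W^{1,3}_0$, where the normal trace vanishes by continuity. Alternatively, use the shifted cutoff $\min(1,(s-h)^+/h)$, which has genuine compact support and gives the same argument on the strip $\{h<s<2h\}$.
\item In your pointwise display the left-hand side should read $\bftau:(\psi_\eta\bfv\odot\nabla d)$. Its bound $D(P,\psi_\eta\bfv\odot\nabla d)$ becomes, in a chart with $\nabla s=-\sqrt{1+g'^2}\,\nu$, exactly $\sqrt{1+g'^2}\,D(P,-\psi_\eta\bfv\odot\nu)$ by positive homogeneity. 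That is the $\mathcal{H}^1$ density you need.
\end{itemize}
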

\begin{proof}
    Step 1:
    Let $\eta>0$ and $\zeta_\eta(x) := \psi (\dist(x,\bar\Gamma_D)/\eta)$ with $\psi \in \mathrm{Lip}(\mathbb R)$
    and $\psi = 1$ in $(-\infty,1/2]$, $\psi=0$ in $[1,\infty)$ and $0\leq \psi \leq 1$.
    Then $(1-\zeta_\eta)\bfv =0$ in $U_{\eta/2}(\bar\Gamma_D)\cap \Omega$. By Lemma \ref{lem:a2} we have
    $\langle \bftau \nu, \bfv\rangle = \langle \bftau\nu,\zeta_\eta \bfv\rangle$. We will prove that
    \begin{align}
        -\langle\bftau\nu,\bfw\rangle \leq \int_{\partial\Omega} D(P,-(\bfw\odot \nu)) \dd \mathcal{H}^1 \label{eq:step1}
    \end{align}
    for $\bfw:=\zeta_\eta\bfv \in W^{1,3}(\Omega)^2$, and we take the limit as $\eta\rightarrow0$
    in Step 6.

    Step 2:
    Assumption \ref{ass} (A1) implies that there exist finitely many open cylinders $Z_1,\dots,Z_N \subset \mathbb R^2$
    which cover $\partial\Omega$, such that (after a rigid motion)
    \begin{align*}
        \Omega\cap Z_j = \{y=(y_1,y_2)\in Z_j : y_2>\gamma_j(y_1)\}, \quad \partial\Omega\cap Z_j
        =\{y:y_2=\gamma_j(y_1)\}\cap Z_j
    \end{align*}
    with $L$-Lipschitz continuous functions $\gamma_j$. The outer normal and the surface element
    in these coordinates are, for $\mathcal{L}^1$ a.e.\ $y_1$,
    \begin{align*}
        \nu(y_1) = \frac{(\gamma_j'(y_1),-1)}{\sqrt{1+\gamma'_j(y_1)^2}}, \quad \dd \mathcal{H}^1 = \sqrt{1+\gamma'_j(y_1)^2} \dd y_1 \ .
    \end{align*}
    Both are invariant under vertical translations. Choose $Z_0\Subset \Omega$ with
    $\bar\Omega \subset \cup_{j=0}^N Z_j$ and a subordinate smooth partition of unity $(\theta_j)_{j=0}^N$,
    $\theta_j\in C^\infty_c(Z_j)$ and $\sum_{j=0}^{N}\theta_j =1$ on $\bar\Omega$.

    Since \eqref{eq:normaltrace} is linear in $\bfw$ and
    $\theta_j\bfw \in W^{1,3}(\Omega)$, we notice that \eqref{eq:step1} can be shown term by term.
    The term for $j=0$ vanishes, since $\theta_0\bfw\in W^{1,3}$ has compact support
    in $\Omega$. As in the proof of Lemma \ref{lem:a2}, the mollified functions have zero trace on $\partial\Omega$ and the limit leads to
    $\langle \bftau\nu, \theta_0\bfw\rangle =0$.

    From now on let $j\geq 1$ be fixed, $\gamma:= \gamma_j$, and redefine $\bfw:= \theta_j\zeta_\eta\bfv$. Choose an open interval $B' \Subset \{y_1:(y_1,y_2)\in Z_j\ \text{for a}\ y_2\}$, a height $M$ and $t_0>0$ with
    \begin{align*}
        \supp \bfw \cap \bar\Omega \subset \{y : y_1\in B', \gamma(y_1)\leq y_2 < M-t_0\} \Subset Z_j \ .
    \end{align*}
    Step 3: For $t \in (0,t_0)$ we define
    \begin{align*}
        D_t := \{y : y_1\in B', \gamma(y_1)+t <y_2<M\}, \quad S_t =\{y : y_1\in B', y_2 = \gamma(y_1)+t\} \ .
    \end{align*}
    $D_t$ is a Lipschitz domain and $S_t$ its lower boundary. We notice that $\bfw$ vanishes for $y_1\in\partial B'$ and $y_2=M$. For $y\in D_t$ a simple calculation shows that $\dist(y,\partial\Omega)\geq c_L t$ for $c_L := (1+L^2)^{-1/2}$ and
    $L$ the Lipschitz constant of $\gamma$.

    Let $\bftau_\rho := \bftau * \varphi_\rho$ with $\varphi_\rho$ the standard mollifier and $\rho < c_L t/2$. On a neighbourhood of $\bar D_t$, $\bftau_\rho$ is
    smooth and we have
     \begin{align*}
        \div \bftau_\rho = (\div\bftau) * \varphi_\rho \ ,
     \end{align*}
     and with the outer normal $\nu(y_1)$ of $D_t$ on $S_t$ (identical to the one of $\Omega$, by
     translation invariance)
     \begin{align}
        \int_{D_t} \bftau_\rho : \varepsilon(\bfw) + \bfw \cdot \div\bftau_\rho \dd y =
        \int_{S_t} \bfw \cdot(\bftau_\rho \nu) \dd \mathcal{H}^1 \ . \label{eq:trans}
     \end{align}
     Step 4:
     The function $T(y_1, s):= (y_1, \gamma(y_1)+s)$ is bi-Lipschitz from $B'\times (0,t_0)$ to
     \begin{align*}
        C := T(B'\times (0,t_0)) = \{y : y_1\in B', 0<y_2-\gamma(y_1)<t_0\} = \cup_{0<t<t_0} S_t
     \end{align*}
     and measure preserving ($\det \mathrm{D}T=1$). In the following we will apply
     a Fubini argument to select admissible slices.
     Let $N_1 \subset \Omega$ be the $\mathcal{L}^2$ null set of non-Lebesgue points of $\bftau$,
     $N_2$ the null set where $\bftau(x)\in \K(P(x))$ fails and $N_3$ the null set where $\bfw \circ T$ is not absolutely continuous on lines (ACL) in $s$-direction. With Fubini we have for $\mathcal{L}^1$ a.e.\ $t\in(0,t_0)$ that $\mathcal{H}^1$ a.e.\ points of $S_t$ are Lebesgue points of $\bftau$ and admissible.

    For such a $t$ we take the limit as $\rho \rightarrow 0$ in \eqref{eq:trans}.
    For the right-hand side, $\bftau_\rho \rightarrow \bftau$ pointwise $\mathcal{H}^1$ a.e.\ on $S_t$ with majorant $\Vert \bftau\Vert_{L^\infty}|\bfw_{|S_t}| \in L^1(S_t)$.
    With the pointwise bound of the support function (Lemma \ref{lem:prop_D}) we have
    \begin{align*}
        \bfw \cdot (\bftau\nu) = \bftau : (\bfw\odot \nu) \geq - D(P, -(\bfw\odot \nu))
    \end{align*}
    $\mathcal{H}^1$ a.e.\ on $S_t$. For a.e.\ $t\in(0,t_0)$ we therefore have
    \begin{align}
        -\int_{D_t} \bftau : \varepsilon(\bfw) + \bfw\cdot\div\bftau \dd y
        \leq \int_{S_t} D(P, -(\bfw\odot \nu)) \dd \mathcal{H}^1 \ . \label{eq:slice}
    \end{align}
    Step 5:
    For the left-hand side, the monotone exhaustion $D_t \nearrow D_0$ with
    $L^1$ integrand leads to convergence to $-\langle \bftau\nu, \bfw\rangle$, since
    the support of $\bfw$ makes the $D_0$ integral equal to the $\Omega$ integral.
    For the right-hand side we use the translation invariance of $\nu$ and $\dd \mathcal{H}^1$:
    \begin{align*}
        &\int_{S_t} D(P, -(\bfw\odot \nu)) \dd \mathcal{H}^1\\
        &\quad=\int_{B'} D(P(T(y_1,t)), -(\bfw (T(y_1,t))\odot \nu(y_1))) \sqrt{1+\gamma'(y_1)^2} \dd y_1 \ .
    \end{align*}
    By ACL we have for a.e.\ $y_1$
    \begin{align*}
        |\bfw (T(y_1,t)) - \bfw (T(y_1,0^+))| \leq \int_{0}^{t} |\partial_s(\bfw\circ T)(y_1,s)| \dd s,
    \end{align*}
    and integration over $y_1$ and H\"older lead to
    \begin{align*}
        \Vert \bfw \circ T(\cdot, t) - \bfw \circ T(\cdot, 0^+)\Vert_{L^1(B')}
        \leq  C t^{2/3} \Vert \nabla(\bfw \circ T)\Vert_{L^3} \rightarrow 0 \ .
    \end{align*}
    The ACL boundary values $\bfw \circ T(\cdot, 0^{+})$ coincide $\mathcal{L}^{1}$ a.e.\ with the trace $\bfw_{|\partial\Omega}$.
    Also $|P(T(y_1,t))-P(T(y_1,0))| \leq \omega_P(t)$, where $\omega_P$ is the modulus of continuity of $P$.
    The Lipschitz continuity from Lemma \ref{lema:prop_K} and Lemma \ref{lem:prop_D} lead to
    \begin{align*}
        |D(P_1,\bfxi_1) - D(P_2,\bfxi_2)| \leq c_\K |P_1-P_2| |\bfxi_1|_F + c_\K \Vert P\Vert_{L^\infty}
        |\bfxi_1-\bfxi_2|_F \ .
    \end{align*}
    This and $|a\odot b|_F \leq |a||b|$ lead to the convergence of the right-hand side of \eqref{eq:slice}
    to $\int_{\partial\Omega} D(P,-(\bfw\odot \nu)) \dd \mathcal{H}^1$ along a sequence of admissible $t$. This proves \eqref{eq:step1}, after summation over $j$.

    Step 6:
    Returning to Step 1 and \eqref{eq:step1}, we have with $\bfw = \zeta_\eta\bfv$ and the positive homogeneity
    of degree 1 of $D(P,\cdot)$ (Lemma \ref{lem:prop_D}) and $0\leq\zeta_\eta\in\mathrm{Lip}(\bar\Omega)$
    \begin{align*}
        -\langle \bftau\nu, \bfv\rangle = - \langle \bftau\nu, \zeta_\eta \bfv\rangle
        \leq \int_{\partial\Omega} \zeta_\eta D(P, -(\bfv\odot \nu)) \dd \mathcal{H}^1 \ .
    \end{align*}
    As $\eta\rightarrow 0$, $\zeta_\eta \rightarrow \mathbf 1_{\bar\Gamma_D}$ pointwise on $\partial\Omega$.
    The integrands are bounded by the $\mathcal{H}^{1}$-integrable function
    $c_\K \Vert P\Vert_{L^{\infty}}|\bfv_{|\partial\Omega}|$.
    Dominated convergence and $\mathcal{H}^{1}(\bar\Gamma_D\setminus\Gamma_D) =0$ conclude the proof.
\end{proof}
\begin{lemma} \label{lem:prove_leq}
    Under Assumption \ref{ass}, for every $\bfv \in W^{1,3}(\Omega)^2$
    \begin{align*}
        \D(\bfv) \leq \int_\Omega D(P,\varepsilon(\bfv)) \dd x + \int_{\Gamma_D} D(P, -(\bfv\odot \nu) ) \dd \mathcal{H}^1 \ .
    \end{align*}
\end{lemma}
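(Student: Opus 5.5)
The estimate follows by bounding each tested functional in the supremum \eqref{eq:def_diss} and then taking the supremum over $\Sigma$. Fix $\bftau\in\Sigma$ and $\bfv\in W^{1,3}(\Omega)^2$. By the definition \eqref{eq:normaltrace} of the distributional normal trace,
\begin{align*}
    -\int_\Omega \bfv\cdot\div\bftau\dd x
    =\int_\Omega \bftau:\varepsilon(\bfv)\dd x-\langle\bftau\nu,\bfv\rangle \ ,
\end{align*}
and both integrals are finite since $\bftau\in L^\infty$, $\div\bftau\in L^{3/2}$ and $\bfv\in W^{1,3}$. No approximation of $\bfv$ is needed here, because \eqref{eq:normaltrace} is defined directly on $W^{1,3}(\Omega)^2$.

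For the volume term I use the pointwise constraint $\bftau(x)\in\K(P(x))$ a.e., which is part of the definition of $\Sigma$. Together with the support-function formula \eqref{eq:def_D} of Lemma \ref{lem:prop_D}, it gives $\bftau(x):\varepsilon(\bfv)(x)\leq D(P(x),\varepsilon(\bfv)(x))$ a.e. Integrating yields
\begin{align*}
    \int_\Omega\bftau:\varepsilon(\bfv)\dd x\leq\int_\Omega D(P,\varepsilon(\bfv))\dd x \ .
\end{align*}
The right-hand side is finite by the Lipschitz bound $D(P,\bfxi)\leq c_\K P|\bfxi|_F$.

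For the boundary term I invoke Lemma \ref{lem:a3} directly. It gives $-\langle\bftau\nu,\bfv\rangle\leq\int_{\Gamma_D}D(P,-(\bfv\odot\nu))\dd\mathcal{H}^1$, with the trace of $\bfv$ on $\Gamma_D$. Adding the two bounds shows that $-\int_\Omega\bfv\cdot\div\bftau\dd x$ is bounded by the claimed right-hand side, uniformly in $\bftau\in\Sigma$. Taking the supremum over $\Sigma$ then proves the lemma.

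The only nontrivial ingredient is the boundary estimate, that is, the slicing argument behind Lemma \ref{lem:a3}. That argument has already been carried out, so the present proof is a short assembly. The one point to check is that the sign conventions match: \eqref{eq:normaltrace} carries $+\int\bfw\cdot\div\bftau$, so the dissipation functional sees exactly $\int\bftau:\varepsilon(\bfv)-\langle\bftau\nu,\bfv\rangle$.
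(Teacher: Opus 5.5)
Your proposal is correct and is essentially the paper's proof. You split $-\int_\Omega \bfv\cdot\div\bftau\dd x$ via the normal-trace identity \eqref{eq:normaltrace}, bound the volume term pointwise by the support function of Lemma \ref{lem:prop_D}, bound the boundary term by Lemma \ref{lem:a3}, and take the supremum over $\Sigma$, with the sign convention checked correctly.
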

\begin{proof}
    For $\bftau \in \Sigma$, Lemma \ref{lem:prop_D} gives a pointwise estimate for the volume term,
    and together with Lemma \ref{lem:a3} this leads to
    \begin{align*}
        - \int_\Omega \bfv \cdot \div \bftau \dd x &= \int_\Omega \bftau : \varepsilon(\bfv) \dd x
        - \langle \bftau\nu, \bfv \rangle\\
        &\leq \int_\Omega D(P, \varepsilon(\bfv)) \dd x
        +\int_{\Gamma_D} D(P, -(\bfv\odot \nu) ) \dd \mathcal{H}^{1} \ .
    \end{align*}
    Taking the supremum over $\bftau\in \Sigma$ concludes the proof.
\end{proof}
\bibliographystyle{plain} 
\bibliography{lit}

@article{RG80,
  title={Duality and relaxation in the variational problems of plasticity},
  author={Temam, Roger and Strang, Gilbert},
  journal={Journal de M{\'e}canique},
  volume={19},
  number={3},
  pages={493--527},
  year={1980}
}

@article {H79,
      author = "W. D.  Hibler ",
      title = "A Dynamic Thermodynamic Sea Ice Model",
      journal = "Journal of Physical Oceanography",
      year = "1979",
      publisher = "American Meteorological Society",
      address = "Boston MA, USA",
      volume = "9",
      number = "4",
      doi = "10.1175/1520-0485(1979)009<0815:ADTSIM>2.0.CO;2",
      pages=      "815--846",
      url = "https://journals.ametsoc.org/view/journals/phoc/9/4/1520-0485_1979_009_0815_adtsim_2_0_co_2.xml"
}

@book {Z85,

    AUTHOR = {Zeidler, Eberhard},
     TITLE = {Nonlinear functional analysis and its applications. {III}},
      NOTE = {Variational methods and optimization,
              Translated from the German by Leo F. Boron},
 PUBLISHER = {Springer-Verlag, New York},
      YEAR = {1985},
     PAGES = {xxii+662},
      ISBN = {0-387-90915-X},
   MRCLASS = {49-02 (46G99 47Hxx 58-02 90Cxx)},
  MRNUMBER = {768749},
MRREVIEWER = {Jean\ Mawhin},
       DOI = {10.1007/978-1-4612-5020-3},
       URL = {https://doi.org/10.1007/978-1-4612-5020-3},
}

@article{A83,
  title={Pairings between measures and bounded functions and compensated compactness},
  author={Anzellotti, Gabriele},
  journal={Annali di Matematica Pura ed Applicata (4)},
  volume={135},
  pages={293--318},
  year={1983},
  doi={10.1007/BF01781073}
}

@article{KT83,
  title={Dual spaces of stresses and strains, with applications to {H}encky plasticity},
  author={Kohn, Robert V. and Temam, Roger},
  journal={Applied Mathematics and Optimization},
  volume={10},
  number={1},
  pages={1--35},
  year={1983},
  doi={10.1007/BF01448377}
}

@article{M34,
  title={Extension of range of functions},
  author={McShane, Edward J.},
  journal={Bulletin of the American Mathematical Society},
  volume={40},
  number={12},
  pages={837--842},
  year={1934},
  doi={10.1090/S0002-9904-1934-05978-0}
}

@article{LTT22,
  title={Well-posedness of {H}ibler's dynamical sea-ice model},
  author={Liu, Xin and Thomas, Marita and Titi, Edriss S.},
  journal={Journal of Nonlinear Science},
  volume={32},
  number={4},
  pages={Paper No. 49},
  year={2022},
  doi={10.1007/s00332-022-09803-y}
}

@article{BDHH22,
  title={Rigorous analysis and dynamics of {H}ibler's sea ice model},
  author={Brandt, Felix and Disser, Karoline and Haller-Dintelmann, Robert and Hieber, Matthias},
  journal={Journal of Nonlinear Science},
  volume={32},
  number={4},
  pages={Paper No. 50},
  year={2022},
  doi={10.1007/s00332-022-09805-w}
}

@misc{DGH25,
  author = {Denk, Robert and Gmeineder, Franz and Hieber, Matthias},
  title = {On the Singular Limit in {H}ibler's Sea Ice Model},
  year = {2025},
  eprint = {2511.09327},
  archivePrefix = {arXiv},
  primaryClass = {math.AP},
  note = {arXiv:2511.09327},
  doi = {10.48550/arXiv.2511.09327}
}

@misc{DD25,
  author = {Dingel, Stefan and Disser, Karoline},
  title = {Global existence and uniqueness for {H}ibler's visco-plastic sea-ice model},
  year = {2025},
  eprint = {2508.16537},
  archivePrefix = {arXiv},
  primaryClass = {math.AP},
  note = {arXiv:2508.16537},
  doi = {10.48550/arXiv.2508.16537}
}

@article{B25,
  author = {Brandt, Felix},
  title = {Well-posedness of {H}ibler's parabolic-hyperbolic sea ice model},
  journal = {Journal of Evolution Equations},
  volume = {25},
  pages = {Paper No. 82},
  year = {2025},
  doi = {10.1007/s00028-025-01098-2}
}

@misc{E25,
  author = {Elenius, Theo},
  title = {Characterizations and properties of solutions to parabolic problems of linear growth},
  year = {2025},
  eprint = {2505.16747},
  archivePrefix = {arXiv},
  primaryClass = {math.AP},
  note = {arXiv:2505.16747v2},
  doi = {10.48550/arXiv.2505.16747}
}

@book{ET99,
  author = {Ekeland, Ivar and Temam, Roger},
  title = {Convex Analysis and Variational Problems},
  series = {Classics in Applied Mathematics},
  volume = {28},
  publisher = {Society for Industrial and Applied Mathematics},
  address = {Philadelphia},
  year = {1999},
  note = {Reprint of the 1976 English edition},
  doi = {10.1137/1.9781611971088}
}

@book{Br11,
  author = {Brezis, Haim},
  title = {Functional Analysis, {Sobolev} Spaces and Partial Differential Equations},
  series = {Universitext},
  publisher = {Springer},
  address = {New York},
  year = {2011},
  doi = {10.1007/978-0-387-70914-7}
}

@book{R13,
  author = {Roub{\'i}{\v c}ek, Tom{\'a}{\v s}},
  title = {Nonlinear Partial Differential Equations with Applications},
  series = {International Series of Numerical Mathematics},
  volume = {153},
  edition = {Second},
  publisher = {Birkh{\"a}user},
  address = {Basel},
  year = {2013},
  doi = {10.1007/978-3-0348-0513-1}
}

@article{KS22,
  author = {Kinnunen, Juha and Scheven, Christoph},
  title = {On the definition of solution to the total variation flow},
  journal = {Calculus of Variations and Partial Differential Equations},
  volume = {61},
  number = {1},
  pages = {Paper No. 40},
  year = {2022},
  url = {https://arxiv.org/abs/2106.05711}
}

@article{HD97,
  author = {Hunke, Elizabeth C. and Dukowicz, John K.},
  title = {An Elastic--Viscous--Plastic Model for Sea Ice Dynamics},
  journal = {Journal of Physical Oceanography},
  volume = {27},
  number = {9},
  pages = {1849--1867},
  year = {1997},
  doi = {10.1175/1520-0485(1997)027<1849:AEVPMF>2.0.CO;2}
}

@article{H01,
  author = {Hunke, Elizabeth C.},
  title = {Viscous--Plastic Sea Ice Dynamics with the {EVP} Model: Linearization Issues},
  journal = {Journal of Computational Physics},
  volume = {170},
  number = {1},
  pages = {18--38},
  year = {2001},
  doi = {10.1006/jcph.2001.6710}
}

@article{BFLM13,
  author = {Bouillon, Sylvain and Fichefet, Thierry and Legat, Vincent and Madec, Gurvan},
  title = {The elastic--viscous--plastic method revisited},
  journal = {Ocean Modelling},
  volume = {71},
  pages = {2--12},
  year = {2013},
  doi = {10.1016/j.ocemod.2013.05.013}
}

@article{BLTT26a,
  author = {Boutros, Daniel W. and Liu, Xin and Thomas, Marita and Titi, Edriss S.},
  title = {Global well-posedness of the elastic--viscous--plastic sea-ice model with the inviscid {Voigt}-regularisation},
  journal = {Mathematical Models and Methods in Applied Sciences},
  volume = {36},
  number = {9},
  pages = {1935--1965},
  year = {2026},
  doi = {10.1142/S0218202526500296},
  url = {https://arxiv.org/abs/2505.03080}
}

@misc{BLTT26b,
  author = {Boutros, Daniel W. and Liu, Xin and Thomas, Marita and Titi, Edriss S.},
  title = {A mathematical study of an elastic-viscous-plastic sea-ice model with the {Kelvin--Voigt} rheology},
  year = {2026},
  eprint = {2604.26295},
  archivePrefix = {arXiv},
  primaryClass = {math.AP},
  note = {arXiv:2604.26295},
  doi = {10.48550/arXiv.2604.26295}
}
\end{document}